    \newtheorem{Lem}{Lemma}[section]
    \newtheorem{Lem-Def}{Lemma-Definition}[section]
    \newtheorem{Prop}[Lem]{Proposition}
    \newtheorem{Thm}[Lem]{Theorem}  
		\newtheorem*{Thm*}{Theorem}
\theoremstyle{definition}
    \newtheorem{Def}[Lem]{Definition}
    \newtheorem{Exa}[Lem]{Example}
    \newtheorem{Rem}[Lem]{Remark}
\newcommand{\ra}{\rightarrow}
\newcommand{\E}{\mathcal E}
\newcommand{\EL}{\mathcal L}
\newcommand{\T}{\mathcal T}
\newcommand{\col}{\colon}
\newcommand{\ora}{\overrightarrow}
\newcommand{\ol}{\overline}
\newcommand{\wh}{\widehat}
\newcommand{\D}{\mathcal{D}}
\newcommand{\PS}{\mathbf{PSD}}
\newcommand{\ps}{\mathcal{PSD}}
\newcommand{\PD}{\mathbf{PD}}
\newcommand{\pd}{\mathcal{PD}}
\newcommand{\sst}{\mathcal{SSD}}
\newcommand{\qs}{\mathcal{QD}}
\newcommand{\J}{\mathcal{J}}
\newcommand{\R}{\mathbb{R}}
\newcommand{\Ima}{\textnormal{Im}}
\newcommand{\trop}{\textnormal{trop}}
\newcommand{\Aut}{\textnormal{Aut}}
\DeclareMathOperator{\rk}{rk}
\DeclareMathOperator{\val}{val}
\DeclareMathOperator{\Div}{Div}
\DeclareMathOperator{\pol}{pol}
\DeclareMathOperator{\pr}{pr}
\DeclareMathOperator{\Id}{Id}
\DeclareMathOperator{\semi}{ss}
\DeclareMathOperator{\allsemi}{s}
\DeclareMathOperator{\Prin}{Prin}
\DeclareMathOperator{\ord}{ord}
\DeclareMathOperator{\Pic}{Pic}
\DeclareMathOperator{\cone}{cone}
\begin{document}

\newcommand\encircle[1]{
  \tikz[baseline=(X.base)] 
    \node (X) [draw, shape=circle, inner sep=0] {\strut #1};}

\setcounter{tocdepth}{1}

 \title[A universal tropical Jacobian over $M_g^{\trop}$]{A universal tropical Jacobian over $M_g^{\trop}$}

\author{Alex Abreu, Sally Andria, Marco Pacini, and Danny Taboada}

\email{alexbra1@gmail.com, sally.andrya@gmail.com, pacini.uff@gmail.com, and  danny\textunderscore daft@hotmail.com}

\maketitle

\begin{abstract}
We introduce and study polystable divisors on a tropical curve, which are the tropical analogue of  polystable torsion-free rank-1 sheaves on a nodal curve. We construct a universal tropical Jacobian over the moduli space of tropical curves of genus $g$. This space parametrizes equivalence classes of tropical curves of genus $g$ together with a $\mu$-polystable divisor, and can be seen as a tropical counterpart of  Caporaso universal Picard scheme. We describe polyhedral decompositions of the Jacobian of a tropical curve via polystable divisors, relating them with other known polyhedral decompositions.
\end{abstract}

\bigskip

MSC (2010): 14T05, 14H10, 14H40

Key words: tropical curve, moduli space, universal tropical Jacobian.

\tableofcontents

\section{Introduction}

\subsection{Background}

  In the last few years, tropical geometry has uncovered deep connections with algebraic geometry, in particular when applied to the moduli theory of algebraic curves. This interplay became evident in  paper \cite{ACP}, which contains a beautiful interpretation of the moduli space of tropical curves as the skeleton of the Berkovich analytification of the moduli space of stable curves. In this sense, tropical geometry can be seen as a confluence point between algebraic and analytic geometry. The result motivated the construction and the analysis of tropical analogues of other interesting moduli spaces in algebraic geometry (see \cite{AP1}, \cite{BR}, \cite{CMP}, \cite{CMR}, \cite{MUW}, \cite{R1}, \cite{R2}, \cite{U}, just to make few examples). This new point of view could also provide a better understanding of the boundary of a given compactification. 
 
 Sometimes, the construction of a tropical moduli space can also help to handle the combinatorial issues of a given problem in algebraic geometry, or even suggest its solution. For example, a ``tropically inspired" resolution of the universal Abel map was recently given in \cite{H} and \cite{AP2} (though the tropical setting does not appear explicitly in \cite{H}).
 
A fundamental problem in algebraic geometry is how to attain a compactified universal Picard variety over the moduli space of stable curves. There are essentially two proper compactifications, Caporaso universal Picard scheme $\ol{P}_{d,g}$, constructed in \cite{C}, and later generalized in \cite{Pand} by Pandharipande for  higher ranks, and the universal Picard stack $\ol{\J}_{\mu,g}$, constructed in  \cite{M15} by Melo building on the work of Esteves \cite{Es01}. The space $\ol{P}_{d,g}$ lives over the moduli space of stable curve, in stark contrast with the space $\ol{\J}_{\mu,g}$, that naturally lives only over the moduli of \emph{pointed} curves, as it is necessary to have enough sections to define its objects. On the other hand, the second one is a fine moduli stack, while the first one   is not.

A natural problem in tropical geometry is the construction of a universal tropical Jacobian over the moduli space $M_g^{\trop}$ of tropical curve (as usual, in the category of generalized cone complexes). This problem has attracted a lot of interest in the last years, due to the centrality of its algebro-geometric counterpart. To our knowledge, the first construction of a universal tropical Jacobian (as a topologial space) for tropical curves of a fixed combinatorial type was carried out in \cite{Len}. The first and third author recently introduced a universal tropical Jacobian in \cite{AP1}. This space is the tropical counterpart of the Picard stack $\ol{\J}_{\mu,g}$. As expected, the construction holds over the moduli space of \emph{pointed} tropical curves, and could be reproduced over $M_g^{\trop}$ only in the nondegenerate case.  This paper is dedicated to the construction of a universal tropical Jacobian over $M_g^{\trop}$ as a generalized cone complex, and it is a natural completion of the results in \cite{AP1}. 

\subsection{Outline of the results}

The objects parametrized in Caporaso space $\ol{P}_{d,g}$  are  stably balanced line bundle on quasistable curves.  Later, Pandharipande re-interpreted $\ol{P}_{d,g}$ as the space parametrizing polystable torsion-free rank-1 sheaves on stable curves. We introduce their tropical analogues, that are  $\mu$-polystable  (pseudo-)divisors on graphs and tropical curves. Here, $\mu$ denotes a polarization. 
 
 The key ingredient to construct a moduli space via polystable divisors is that 
  every divisor on a tropical curve is equivalent to a $\mu$-polystable divisor. Moreorever, two equivalent $\mu$-polystable divisors have the same combinatorial type (see Theorem \ref{thm:poly}). Our proof relies on  purely combinatorial arguments. The same result is also proved in \cite[Proposition 4.4]{CPS} using a more geometric approach.  
%that every divisor on a tropical curve has a polystable representative, and two equivalent polystable divisors have the same combinatorial type.
We use this result to construct a generalized cone  complex $P^\trop_{\mu,g}$ over $M_g^{\trop}$. Each cone parametrizes equivalence classes of polystable divisors on tropical curves with fixed combinatorial type. We need  a subtle analysis of how the cones can be glued in a nice way as prescribed by specializations of graphs (see Example \ref{exa:Karl}).

\begin{Thm*}[\ref{prop:parameteruni}  and \ref{thm:universalPic}]
The generalized cone complex $P^\trop_{\mu,g}$ has dimension $4g-3$ and it is connected in codimension $1$. The natural forgetful map $\pi^{\trop}\col P^\trop_{\mu,g}\to M_g^\trop$ is a map of generalized cone complexes, and we have: 
\[
(\pi^{\trop})^{-1}[X]\cong J(X)/\Aut(X),
\]
for every stable tropical curve $X$ of genus $g$. (Here, $J(X)$ is the tropical Jacobian.)
The space $P^\trop_{\mu,g}$ parametrizes equivalence classes of pairs $(X,\D)$, where $X$ is a stable tropical curve of genus $g$ and $\D$ is a $\mu$-polystable divisor on $X$.
\end{Thm*}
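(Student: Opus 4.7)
The plan is to build $P^\trop_{\mu,g}$ combinatorially, one cone at a time, indexed by the possible combinatorial types $(G,\delta)$ consisting of a stable graph $G$ of genus $g$ together with a combinatorial type $\delta$ of a $\mu$-polystable divisor on $G$, and then verify the four required properties (cone complex structure, dimension, fiber description, codimension-$1$ connectedness) one at a time.

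First, I would set up the cones. To each combinatorial type $(G,\delta)$ I assign a rational polyhedral cone $\sigma_{(G,\delta)}$ whose coordinates are the edge lengths of $G$ together with the continuous parameters (positions along subdivided edges) recording a $\mu$-polystable divisor of type $\delta$ on a tropical curve with skeleton $G$. A direct parameter count gives $\dim \sigma_{(G,\delta)} \le (3g-3) + g = 4g-3$, with equality realized when $G$ is trivalent and $\delta$ places divisor support in general position; this yields the claimed dimension $4g-3$.

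Next I would define the face maps and glue. A specialization $G' \rightsquigarrow G$ of stable graphs (edge-length parameters going to zero) induces specializations $(G',\delta') \rightsquigarrow (G,\delta)$ of polystable types, and Theorem \ref{thm:poly} is exactly what guarantees that each equivalence class admits a unique polystable combinatorial type on each graph; this makes the induced face map $\sigma_{(G,\delta)} \hookrightarrow \sigma_{(G',\delta')}$ well-defined and independent of representatives. The delicate point, flagged in Example \ref{exa:Karl}, is that multiple polystable types on $G$ may be limits of a single type on $G'$, so one must check compatibility on overlaps; this is the main obstacle and is precisely where Theorem \ref{thm:poly} does the essential work. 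Taking the colimit over all $(G,\delta)$ produces the generalized cone complex $P^\trop_{\mu,g}$, and the forgetful rule $(G,\delta) \mapsto G$ is compatible with specializations, yielding the map of generalized cone complexes $\pi^\trop \col P^\trop_{\mu,g} \to M_g^\trop$.

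For the fiber description, fix a stable tropical curve $X$ of genus $g$ with skeleton $G$. By construction the fiber $(\pi^\trop)^{-1}[X]$ is the disjoint union of the strata over $X$ of the cones $\sigma_{(G,\delta)}$, glued by identifications coming from $\Aut(G)$. By Theorem \ref{thm:poly} every divisor class on $X$ has a $\mu$-polystable representative, unique up to combinatorial type, so the set of such strata is in bijection with $\Div(X)/\Prin(X) = J(X)$; passing to the quotient by automorphisms gives $(\pi^\trop)^{-1}[X] \cong J(X)/\Aut(X)$.

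Finally, for connectedness in codimension $1$, I would show that any two maximal cones $\sigma_{(G,\delta)}$, $\sigma_{(G',\delta')}$ (with $G,G'$ trivalent and $\delta,\delta'$ of maximal dimension) can be connected by a sequence of maximal cones, each pair sharing a codimension-$1$ face. Two kinds of moves suffice: edge flips of the underlying trivalent graph (which already generate codimension-$1$ connectedness of $M_g^\trop$) and, within a fixed graph stratum, passages between adjacent maximal polystable chambers in the known polyhedral decomposition of $J(X)$ (which is connected in codimension $1$, compatible with the decomposition described at the end of the abstract). Combining these yields the desired connectedness. The parametrization statement is then an immediate consequence: a point of $P^\trop_{\mu,g}$ is an equivalence class of pairs $(X,\D)$ with $\D$ $\mu$-polystable, the equivalence identifying isomorphic pairs.
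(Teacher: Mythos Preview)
Your overall architecture is right, but there is a genuine gap in the construction of the cones that propagates through the dimension count, the parametrization, and the fiber description.

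You describe $\sigma_{(G,\delta)}$ as the cone whose coordinates are the edge lengths of $G$ together with the positions along subdivided edges. That is the cone $\R^{E(G^\E)}_{\ge 0}$ (equivalently $\tau_{(G,\E,D)}$ in the paper's notation), and it is \emph{not} the correct cone. Theorem \ref{thm:poly} only says that two equivalent $\mu$-polystable divisors share a combinatorial type; it does \emph{not} say the divisor itself is unique. Within a fixed type $(\E,D)$ with $\E$ disconnecting, there is a positive-dimensional family of equivalent polystable divisors (Remark \ref{rem:D1D2}, Lemma \ref{lem:DLE}). The paper handles this by passing to the quotient $\sigma_{(G,\E,D)}=\R^{E(G^\E)}_{\ge 0}/\EL_\E$, where $\EL_\E$ is the linear subspace encoding exactly these principal $\E$-divisors. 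Without this quotient your dimension bound fails already in Example \ref{exa:polyposet}: there $g=2$, $|E(\Gamma)|=3$, and the maximal polystable type has $|\E|=3$, so your naive cone has dimension $6>4g-3=5$. The correct count is $|E(\Gamma)|+\rk(\E,D)$ with $\rk(\E,D)=|\E|-b_0(\Gamma_\E)+b_0(\Gamma)\le b_1(\Gamma)$, and the quotient is what enforces this.

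The same omission breaks your fiber argument: you write that ``the set of such strata is in bijection with $J(X)$'', but $J(X)$ is a torus, not a finite set. What actually happens is that the quotient polytopes $P_{\E,D}(X)=T_\E(K_{\E,D}(X))$ glue to a polyhedral complex $P^{\trop}_\mu(X)$, and one proves a homeomorphism $P^{\trop}_\mu(X)\to J(X)$ via the Abel map (Theorem \ref{thm:decomposition}); only then does the fiber over $[X]$ become $J(X)/\Aut(X)$. Your codimension-$1$ connectedness sketch is reasonable in spirit, but the paper streamlines it by first proving the poset $\ps_{\mu,g}$ is ranked and connected in codimension $1$ (Proposition \ref{prop:ranked}, Theorem \ref{thm:rankuniv}), using the retraction $\pol\colon\sst_\mu(\Gamma)\to\ps_\mu(\Gamma)$ to transport the known codimension-$1$ connectedness of the quasistable poset from \cite{AP1}; this avoids having to argue separately about chambers in each $J(X)$.
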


We point out that,
looking back to algebraic geometry, there are other natural candidates for a universal tropical Jacobian over $M_g^{\trop}$ (see Section \ref{sec:attempt}). None of them are suitable for our purposes. In fact, one could try to define a universal object parametrizing \emph{all} $\mu$-semistable divisors on tropical curves. The problem is that the corresponding topological space is not a cone complex. One could try to solve this issue by considering just \emph{simple} $\mu$-semistable divisors. In this case, even though the corresponding topological space is a generalized cone complex, its points are not in one to one correspondence with linear equivalence classes of divisors on tropical curves.

We also construct polyhedral decompositions of the Jacobian $J(X)$ of a tropical curve $X$. 
Polyhedral decompositions of the tropical Jacobian are  closely related to toroidal compactifications of the Jacobian of a curve (see \cite{Mum}, \cite{MW}, \cite{CPS}). An interesting decomposition is studied in \cite{ABKS} in degree $g$ through break divisors. A similar analysis is done in \cite{AP1} for all degrees through quasistable divisors on tropical curves. The strategy is to define a polyhedral complex  $P^{\trop}_\mu(X)$ by means of  $\mu$-polystable divisors on $X$. This is done by gluing polytopes along faces as prescribed by specializations. Then we compare $P_\mu^{\trop}(X)$ with $J(X)$:

\begin{Thm*}[\ref{thm:decomposition}]
Given a tropical curve $X$, there is a homeomorphism $P^{\trop}_\mu(X)\to J(X)$. 
\end{Thm*}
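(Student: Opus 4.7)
The strategy is to produce a natural continuous bijection $\Phi\colon P^{\trop}_\mu(X)\to J(X)$ sending the data of a $\mu$-polystable divisor to its linear equivalence class, and then upgrade it to a homeomorphism by a compactness argument.

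First I would define $\Phi$ cell by cell. A point of $P^{\trop}_\mu(X)$ lies in a polytope $P_\D$ indexed by a combinatorial type of $\mu$-polystable divisor $\D$ on $X$; its coordinates encode the positions of the chips along the edges of $X$. Associating to such data the corresponding divisor on the metric graph and then passing to its class in $J(X)$ yields a continuous, piecewise-affine map on each closed cell. The compatibility of these cell-wise maps along shared faces must then be checked: the gluings in $P^{\trop}_\mu(X)$ are prescribed by specializations between combinatorial types, and one must verify that, when a boundary point of $P_\D$ is identified with a boundary point of $P_{\D'}$, the two resulting divisors differ by a principal divisor on $X$. This should reduce to local degenerations in which a chip slides to an endpoint of an edge, producing a new combinatorial type, and should follow from a direct computation with chip-firing moves supported at the relevant vertex.

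The bijectivity of $\Phi$ is essentially handed to us by Theorem~\ref{thm:poly}. Surjectivity is immediate, since every divisor class on $X$ admits a $\mu$-polystable representative. For injectivity, if two $\mu$-polystable divisors are equivalent, Theorem~\ref{thm:poly} forces them to share a combinatorial type, hence to lie in a common cell $P_\D$; within that cell one must argue that distinct chip positions give distinct classes in $J(X)$, since any principal divisor relating two such configurations would force the chips into a position incompatible with the polystability condition. This is a rigidity assertion for polystable divisors inside a fixed combinatorial type.

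To conclude, I would observe that $P^{\trop}_\mu(X)$ is compact, being a finite union of bounded polytopes, while $J(X)$ is Hausdorff. A continuous bijection from a compact space to a Hausdorff space is automatically a homeomorphism, yielding the claim. The main obstacle, in my view, is the combined gluing-and-rigidity step: verifying that the face identifications baked into the definition of $P^{\trop}_\mu(X)$ via specializations correspond precisely to the linear equivalence relations in $J(X)$, with no hidden identifications left over. This is presumably where the careful combinatorial analysis alluded to in the construction of $P^{\trop}_{\mu,g}$, and motivating Example~\ref{exa:Karl}, becomes essential.
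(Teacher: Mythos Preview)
Your approach is essentially the same as the paper's: define the map cell by cell via the Abel map, deduce bijectivity from Theorem~\ref{thm:poly}, and conclude by the compact-to-Hausdorff argument. The ``gluing-and-rigidity'' obstacle you flag is in fact already resolved before this point---the polytope $P_{\E,D}(X)$ is by construction the quotient of $K_{\E,D}(X)$ by the subspace $L_\E$ encoding exactly the linear equivalences among divisors of that combinatorial type (Proposition~\ref{prop:parameter} and Lemma~\ref{lem:DLE}), so injectivity within a cell and compatibility along faces come for free from the factorization of the Abel map in Equation~\eqref{eq:Pcont}.
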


It is interesting to observe that, if $p_0$ is a point of $X$ and $J^{\trop}_{p_0,\mu}(X)$ is the polyhedral complex of $(p_0,\mu)$-quasistable divisors on $X$ (also homeomorphic to $J(X)$), then we have a refinement map of polyhedral complexes $J^{\trop}_{p_0,\mu}(X)\to P^{\trop}_\mu(X)$ (see Proposition \ref{prop:refinement}). The same polyhedral decompositions of the tropical Jacobian by means of polystable divisors and the study of their relationship with quasistable divisors also appear in \cite[Proposition 5.8 and Corollary 5.10]{CPS}.

A tool to compare quasistable (or, more generally, semistable) divisors and polystable divisors is Proposition \ref{prop:minimal}. There, we show that for every $\mu$-semistable pseudo-divisor on a graph $\Gamma$, there is a unique minimal $\mu$-polystable pseudo-divisor on $\Gamma$ that specializes to it. This distinguished $\mu$-polystable pseudo-divisor is obtained  via an iterative procedure reminding of the construction of the graded of a torsion-free rank-1 sheaf on a curve by means of Jordan-Holder filtrations (see \cite[Section 1.3]{Es01} and \cite[Proposition 3.7]{CPS}).  Proposition \ref{prop:minimal} can be viewed as a discrete analogue of the property of a geometric invariant theory quotient stating that 
each semistable orbit contains a unique polystable orbit in its closure.

In Section \ref{sec:strat} we discuss natural stratifications (in the sense of \cite[Definition 1.4.2]{CC}) of universal Picard moduli spaces. 

%We conclude the paper with a question about the relationship of $P_{\mu,g}^{\trop}$ and skeletons of universal Picard stacks. 

About ten days before submitting this paper to ArXiv, we were made aware of the paper \cite{CPS} by Christ, Payne, and Shen. They also consider polystable divisors on tropical curves to describe polyhedral decompositions of the tropical Jacobian, and relate them to Mumford models of compactified Jacobians. Some of the present results appear in \cite{CPS}, although the methods and purpose of the two papers are somewhat different. We made an effort to highlight the common results.

\subsection*{Acknowledgments}	

We thank Karl Christ for sending us the preprint \cite{CPS}. We  thank Karl Christ and Sam Payne for important comments and remarks on a preliminary version of this paper. 

The second and fourth author were supported by Capes (Bolsa de doutorado), the third author was supported by CNPq-APQ, processo 301314/2016-0.

\section{Preliminaries}

\subsection{Posets}
 A \emph{poset} (\emph{partially ordered set}) is a pair $(S,\leq)$ where $S$ is a set and $\leq$ is a partial order on $S$.
  A \emph{chain} in $S$ is a sequence $x_0<x_1<\ldots < x_n$. We call $n$ the \emph{length} of the chain.
%  and we say that $x_0$ (respectively, $x_n$) is the \emph{starting point} (respectively, \emph{ending point}) of the chain. 
We say that $S$ is \emph{ranked} (of length $n$) if the maximal chains have all the same length $n$. The  maximal length of chains in $S$ is precisely the Krull dimension of $S$ as a topological space. Hence, if $S$ is ranked, then it is of pure dimension.
%For every $x\in S$ we define the \emph{dimension} of $x$ as the maximum length for a chain ending in $x$; in other words, the dimension of $x$ is precisely the Krull dimension of $\overline{\{x\}}$.
If $S$ is a ranked poset of length $n$, then the \emph{codimension} of $x\in S$ is $n-\dim_S(x)$, i.e., the length of all maximal chains starting from $x$. 
We say that $S$ is \emph{graded} if it has a function $\text{rk}\col S\to \mathbb{N}$, called \emph{rank function}, such that $\text{rk}(x)=\text{rk}(y)+1$ whenever $y<x$ and there is no element $z\in S$ such that $y<z<x$. Every ranked poset is graded with rank function given by the dimension.

Let $S$ be a ranked poset. We say that $S$ is \emph{connected in codimension one} if for every maximal elements $y,y'\in S$ there are two sequences of elements $x_1,\ldots, x_n\in S$ and $y_0,\ldots, y_n\in S$ such that 
	\begin{enumerate}
	\item $x_i$ has codimension $1$ for every $i=1,\ldots, n$.
	\item $y_i$ is maximal for every $i=0,\ldots,n$.
	\item $y_0=y$ and $y_n=y'$.
	\item $x_{i+1}< y_i$ for every $i=0,\ldots,n-1$ and $x_i<y_i$ for every $i=1,\ldots, n$.
	\end{enumerate}
We call these sequences a \emph{path in codimension $1$} from $y$ to $y'$.	

\subsection{Cones and polyhedra}

We briefly introduce cones and polyhedra.
 We will adopt the terminology of \cite[Section 3.2]{AP1}. 
 Given a finite set $S\subset \R^n$ we define 
\[
\cone(S):=\left\{\sum_{s\in S}\lambda_ss|\lambda_s\in \mathbb R_{\ge0}\right\}.
\]
A subset $\sigma\subset \R^n$ is called a \emph{polyhedral cone} if $\sigma=\cone(S)$ for some finite set $S\subset \mathbb{R}^n$. If there is a subset $S\subset \mathbb{Z}^n$ with $\sigma=\cone(S)$ then $\sigma$ is called \emph{rational}. Throughout,  \emph{cone} will mean \emph{rational polyhedral cone}. A \emph{face} of a cone $\sigma$ is the intersection of $\sigma$ with some linear subspace $H\subset \mathbb{R}^n$ of codimension one such that $\sigma$ is contained in one of the closed half-spaces determined by $H$. A \emph{generalized cone complex} is the colimit (as topological space) of a finite diagram of cones with face morphisms (i.e., morphisms of cones taking faces to faces). We refer to \cite[Section 2]{ACP} for the more details on cone complexes.

A \emph{polyhedron} $P\subset \R^n$ is an intersection of a finite number of half-spaces of $\R^n$. A face of a polyhedron $P$ is the intersection of $P$ and a hyperplane $H$ such that $P$ is contained in a closed half-space determined by $H$.  A \emph{polyhedral complex} is the colimit (as topological space) of a finite poset of polyhedra with face morphisms (i.e., morphisms of polyhedra taking faces to faces).

\subsection{Graphs}

  Let $\Gamma$ be a graph. We denote by $V(\Gamma)$ the set of vertices and by $E(\Gamma)$ the set of edges of $\Gamma$. We also denote by $b_0(\Gamma)$ and $b_1(\Gamma)$ its first and second Betti numbers, i.e., $b_0(\Gamma)$ is the number of connected components and 
  \[
  b_1(\Gamma):=|E(\Gamma)|-|V(\Gamma)|+b_0(\Gamma).
  \]
Sometimes, we will refer to $b_1(\Gamma)$ as the \emph{genus} of the graph. We let $\Aut(\Gamma)$ be the group of automorphisms of $\Gamma$.

Given a subset $V\subset V(\Gamma)$, we let $E(V)\subset E(\Gamma)$ be the subset of edges of $\Gamma$ connecting vertices in $V$.    
  Given disjoint subsets $V,W\subset V(\Gamma)$ of $V(\Gamma)$ we define the set $E(V,W)\subset E(\Gamma)$ of the edges that connect a vertex in $V$ to a vertex in $W$. We set $\delta_{\Gamma,V}:=|E(V,V^c)|$. More generally, given subsets $V,W\subset V(\Gamma)$, we define $E(V,W):=E(V\setminus W,W\setminus V)$.
  
  Given a subset $\E\subset E(\Gamma)$ and a vertex $v$ of $\Gamma$, we define the \emph{valence of $v$ in $\E$}, denoted by $\val_\E(v)$, as the number of edges in $\E$ incident to $v$ (with loops counting twice). In the case that $\E=E(\Gamma)$, we simply write $\val(v)$ and call it the \emph{valence} of $v$.
For every subset $V\subset V(\Gamma)$, we set  $\val_\E(V):=\sum_{v\in V}\val_\E(v)$.

The graph $\Gamma$ is \emph{circular} if it is connected and its vertices have all valence $2$. A \emph{cycle} on $\Gamma$ is a circular subgraph of $\Gamma$. 
  
  Consider a subset $\E\subset E(\Gamma)$. 
  We let $\Gamma/\E$ and $\Gamma_\E$ be the graphs obtained by the contraction of edges in $\E$ and by the removal of edges in $\E$, respectively. There is a natural surjection $V(\Gamma)\to V(\Gamma/\E)$ and a natural identification $E(\Gamma/\E)=E(\Gamma)\setminus\E$. Moreover, we have $V(\Gamma_\E)=V(\Gamma)$ and $E(\Gamma_\E)=E(\Gamma)\setminus\E$. 
  The subset $\E$ is called \emph{non-disconnecting} if $\Gamma_\E$ is connected, otherwise it is \emph{disconnecting}.

  We let $\Gamma^\E$ be the graph obtained from $\Gamma$ by inserting a vertex, called \emph{exceptional}, in the interior of any edge $e\in\E$. We denote by $v_e$ the new vertex inside $e$.  Thus, for every edge $e\in \E$, we get exactly two edges $e_1,e_2$ of $\Gamma^\E$ incident to $v_e$. We say that $e_1,e_2$ are the edges \emph{over} $e$, and that $e$ is the edge \emph{under} $e_1$ (or $e_2$). A \emph{refinement} of $\Gamma$ is a graph obtained by iterating the operation taking $\Gamma$ to $\Gamma^\E$.

   A graph $\Gamma$ \emph{specializes} to a graph $\Gamma'$, and we write $\iota\col \Gamma\ra\Gamma'$, if there is a subset $\E\subset E(\Gamma)$ such that $\Gamma'$ is isomorphic to $\Gamma/\E$. Then a specialization $\iota\col \Gamma\ra\Gamma'$ comes equipped with a surjective map $\iota^V\col V(\Gamma)\ra V(\Gamma')$ and an injective map $\iota^E\col E(\Gamma')\ra E(\Gamma)$. We simply write $\iota=\iota^V$ and see $E(\Gamma')$ as a subset of $E(\Gamma)$ via $\iota^E$.

  Sometimes we consider an orientation on $\Gamma$. In this case, for every edge $e\in E(\Gamma)$, we denote by $s(e),t(e)\in V(\Gamma)$ the source and the target of an (oriented) edge $e\in E(\Gamma)$, respectively. For every subset $\E\subset E(\Gamma)$ and every edge $e\in E(\Gamma)$, we  let $e^s,e^t$ the edges of $\Gamma^\E$ over $e$, with $e^s$ incident to $s(e)$ and $e^t$ incident to $t(e)$, respectively.
    Moreover, given an oriented cycle $\gamma$  on $\Gamma$, we define 
\begin{equation}\label{eq:gammadef}
\gamma(e)=\begin{cases}
 0 & \text{ if $e$ is not a edge of $\gamma$}\\
 1 & \text{ if the orientations on $\gamma$ and $\Gamma$ coincide on $e$}\\
 -1& \text{otherwise.}
\end{cases}
\end{equation}

A \emph{(vertex) weighted graph} is a graph $\Gamma$ together with a function $w_\Gamma\colon V(\Gamma)\to\mathbb{Z}_{\geq0}$, called \emph{weight function}. A weighted graph $\Gamma$ is \emph{stable} if $\val(v)+2w(v)\geq3$ for every vertex $v\in V(\Gamma)$.  The genus of $\Gamma$ is $g(\Gamma):=\sum_{v\in V(\Gamma)} w_\Gamma(v)+b_1(\Gamma)$.  
A \emph{specialization} $\iota\col \Gamma\ra \Gamma'$ of weighted graphs $\Gamma$ and $\Gamma'$ is a graph specialization such that $w_{\Gamma'}(v')=w_\Gamma(\iota^{-1}(v'))$ for every $v'\in V(\Gamma')$.

\subsection{Tropical curves}

A \emph{tropical curve} is a metric space $X$ such that there exists a weighted graph $\Gamma$ and a function $\ell\col \Gamma\ra \R^{E(\Gamma)}$, called \emph{length function}, so that $X$ is obtained by gluing segments $[0,\ell(e)]\subset \R$ for every $e\in E(\Gamma)$ at their end vertices, as prescribed by the combinatorial data of the graph. We call $(\Gamma,\ell)$ (or simply $\Gamma$ when the function $\ell$ is clear), \emph{a model} of the tropical curve $X$. We will identify isometric tropical curves, so a tropical curve could admit different models. For every edge $e\in E(\Gamma)$, we let $e^\circ\subset X$ be the interior of the corresponding segment of $X$. For points $p,q\in e$, we denote by $\ol{pq}$ the segment contained in $e$ with endpoints $p$ and $q$. If an orientation is chosen on $\Gamma$, we denote by $\overrightarrow{pq}$ the oriented segment.

Let $X$ be a tropical curve. 
 The \emph{genus} of $X$ is the genus of one of its underlying weighted graphs. The \emph{valence} of a point $p\in X$ is the valence of $p$ as a vertex of any model containing $p$ as a vertex. Note that $X$ inherits a well-defined weight function $w_X\col X\ra \mathbb{Z}_{\ge0}$ from any one of its models, where $w_X(p)=0$ is $p$ is not a vertex of the model. We say that $X$ is \emph{stable} if $\delta_{X,p}+2w_X(p)\ge 3$ for every point $p\in X$ such that $\delta_{X,p}\leq1$.  The \emph{stable model} of $X$ is the model $\Gamma$ such that
\[
V(\Gamma)=\{p\in X ; \text{ either } \val_X(p)\ne 2, \text{ or } \val_X(p)=2 \text{ and } w_X(p)\ne 0\}.
\]

 A \emph{polarization of degree $d$} on $X$ is a function $\mu\col X\to\R$ such that $\mu(p)=0$ for all, but finitely many $p\in X$, and $\sum_{p\in X}\mu(p)=d$. Note that when $X$ has model $\Gamma$ and $\mu$ is a polarization on $\Gamma$, then there is a natural induced polarization on $X$. Conversely, all polarizations on $X$ come from polarizations on models of $X$.   
 %Throughout the paper, when a tropical curve has a polarization $\mu$ and model $\Gamma$, we will implicitly assume that $\mu$ is induced by a polarization on $\Gamma$.
 
 Given a polarization $\mu$ on $X$, the \emph{$\mu$-model} of $X$ is the model whose vertices are the vertices of its stable model and the points $p$ of $X$ such that $\mu(p)\ne 0$.

A \emph{(tropical) subcurve} $X$ is a tropical curve $Z$ admitting an injection $Z\subset X$ that is an isometry over each connected component of $Z$. If $\Gamma$ is a model of $X$ and $Z$ is a subcurve of $X$, then there exists a minimal refinement $\Gamma'$ of $\Gamma$ such that $Z$ is induced by a subgraph $\Gamma'_Z$ of $\Gamma'$. We define
\[
\delta_{X,Z}:=\sum_{v\in V(\Gamma'_Z)}\val_{E(\Gamma')\setminus E(\Gamma'_Z)}(v)
\]

Given a subset $V\subset V(\Gamma)$, for a model $\Gamma$ of $X$, we define $X_V$ as the subcurve of $X$ with model $\Gamma(V)$ and length function induced by the lenght function of $X$.

\section{Divisors on graphs and tropical curves}  
  
  \subsection{Divisors on graphs}

Let $\Gamma$ be a (weighted) graph. A divisor $D$ on $\Gamma$ is a function $D\colon V(\Gamma)\to \mathbb{Z}$. The degree of $D$ is the integer $\deg D:=\sum_{v\in V(\Gamma)}D(v)$. 

Let $D$ be a divisor on $\Gamma$. Given  a subset $\E\subset E(\Gamma)$, we define the divisor $D_\E$ on $\Gamma_\E$ as $D_\E(v):=D(v)$, for every $v\in V(\Gamma_\E)=V(\Gamma)$.
If $\iota\col\Gamma\ra \Gamma'$ is a specialization of graphs, we define the divisor $\iota_*(D)$ on $\Gamma'$ such that $\iota_*(D)(v')=\sum_{v\in\iota^{-1}(v')}D(v)$.

	A \emph{pseudo-divisor} on $\Gamma$ is a pair $(\E,D)$ where $\E\subset E(\Gamma)$ and $D$ is a divisor on $\Gamma^\E$ such that $D(v)=-1$ for every exceptional vertex $v\in V(\Gamma^\E)$. If $\E=\emptyset$, then $(\E,D)$ is just a divisor of $\Gamma$.
	We denote by $\Aut(\Gamma,\E,D)$ the subgroup of $\Aut(\Gamma)$ made of maps sending $(\E,D)$ to itself. More generally, two triples $(\Gamma,\E,D)$ and $(\Gamma',\E',D')$ are \emph{isomorphic} if there is a graph   isomorphism from $\Gamma$ to $\Gamma'$ mapping $(\E,D)$ to $(\E',D')$.
	
	If $\iota\col\Gamma\ra \Gamma'$ is a specialization of graphs and $(\E,D)$ is a pseudo-divisor on $\Gamma$, we define $\iota_*(\E,D)$ as the pseudo-divisor on $\Gamma'$ given by $(\E\cap E(\Gamma'),(\iota_{\E})_*(D))$, where $\iota_\E\col \Gamma^\E\to \Gamma'^{\E\cap E(\Gamma')}$ is the specialization induced by $\iota$.

Given an integer $d$, a \emph{degree-$d$ polarization} on $\Gamma$ is a function $\mu\col V(\Gamma)\to\R$ such that $\sum_{v\in V(\Gamma)}\mu(v)=d$. If $\mu$ is a polarization on $\Gamma$, then we set $\mu(V):=\sum_{v\in V}\mu(v)$ for every subset $V\subset V(\Gamma)$. For every specialization of graphs $\iota\col\Gamma\to\Gamma'$, there is an induced degree-$d$ polarization $\iota_*(\mu)$ on $\Gamma'$ defined as $\iota_*(\mu)(v'):=\sum_{v\in\iota^{-1}(v')}\mu(v)$.  

A \emph{universal genus-$g$ polarization (of degree $d$)} is the datum of a polarization $\mu_\Gamma$ for every genus-$g$ stable weighted graph $\Gamma$ such that $\mu_{\Gamma'}=\iota_*(\mu_\Gamma)$ for every specialization $\Gamma\ra \Gamma'$. 
The \emph{canonical genus-$g$ universal polarization of degree $d$} is
\[
\mu_\Gamma(v)=\frac{d(2w_\Gamma(v)-2+\val(v))}{2g-2}.
\]

Let $\mu$ be a polarization of degree $d$ on $\Gamma$.
If $\E\subset E(\Gamma)$ is a subset of edges, then $\mu_\E(v):=\mu(v)+\frac{1}{2}\val_\E(v)$ defines a polarization $\mu_\E$ on $\Gamma_\E$ of degree $d+|\E|$.
Given a subdivision $\Gamma^{\E}$ of $\Gamma$ for some $\E\subset E(\Gamma)$, there is an induced degree-$d$ polarization $\mu^\E$ on $\Gamma^\E$ such that $\mu^\E(v)=\mu(v)$ if $v\in V(\Gamma)$, and $\mu^\E(v)=0$ otherwise.

Let $\Gamma$ be a connected graph. Let $\mu$ be a degree-$d$ polarization on $\Gamma$ and $D$  a degree-$d$ divisor on $\Gamma$. For every subset $V\subset V(\Gamma)$, we set
\[
\beta_D(V)=\deg(D|_V)-\mu(V)+\frac{\delta_{\Gamma,V}}{2}.
\]
By \cite[Lemma 4.1]{AP1}, for subsets $V,W\subset V(\Gamma)$ we have:
\begin{equation}
\label{eq:betasumD}
\beta_D(V\cup W)+\beta_D(V\cap W)=\beta_D(V)+\beta_D(W)-|E(V, W)|.
\end{equation}

  The divisor $D$ is $\mu$-\emph{semistable} (respectively, $\mu$-\emph{stable}) on $\Gamma$ if $\beta_D(V)\geq0$ (respectively, $\beta_D(V)>0$)  for every subset $V\subset V(\Gamma)$ (respectively, for every non-empty proper subset  $V\subsetneqq V(\Gamma)$). 
  Given a vertex $v_0\in V(\Gamma)$, the divisor $D$ is $(v_0,\mu)$-\emph{quasistable} if $\beta_D(V)\geq0$ for every proper subset $V\subsetneqq V(\Gamma)$, with strict inequality if $v_0\in V$.\par
  
   A polarization $\mu$ is said  to be \emph{nondegenerate} if every $\mu$-semistable divisor  is actually $\mu$-stable (see \cite{KP} for a more explicit characterization). A universal polarization is said to be \emph{nondegenerate} if it is nondegenerate for each graph.
  
  We need to extend the stability conditions to the case of non-connected graphs. 
  The notion of $\mu$-semistability naturally extends for divisors on non-connected graphs.

\begin{Rem}
  Let $\Gamma$ be a graph. Let $\mu$ be a degree-$d$ polarization on $\Gamma$ and $D$ a $\mu$-semistable divisor of degree-$d$ on $\Gamma$. If $\Gamma'$ is a connected component of $\Gamma$, then 
  \[
  \beta_D(V(\Gamma'))+\beta_D(V(\Gamma')^c)=0.
  \]
  Hence $\beta_D(V(\Gamma'))=0$. In particular,  $\mu(\Gamma')=\deg(D|_{\Gamma'})$, which is an integer, so  $\mu|_{\Gamma'}$ is a polarization on $\Gamma'$.
\end{Rem}
 
 If $\Gamma$ is a non-connected graph and $\mu$ is  a degree-$d$ polarization on $\Gamma$, we say that a divisor $D$ of degree $d$ on $\Gamma$ is $\mu$-\emph{stable} if it is $\mu$-semistable and $D|_{\Gamma'}$ is $\mu|_{\Gamma'}$-stable for every connected component $\Gamma'$ of $\Gamma$. 
 
  More generally, let $(\E,D)$ be a pseudo-divisor on $\Gamma$ and let $\mu_\E$ be the induced polarization on $\Gamma_\E$. For every subset $V\subset V(\Gamma_\E)=V(\Gamma)$, we set
\[
\beta_{\E,D}(V)=\deg(D|_V)-\mu_\E(V)+\frac{\delta_{\Gamma_\E,V}}{2}.
\]
Notice that $\beta_{\E,D}(V)=\beta_D(V)$ if $\E=\emptyset$. \par
   For subsets $\E,\E'\subset E(\Gamma)$ such that $\E'\subset \E$, we can consider the pseudo-divisor $(\E',D')$ on $\Gamma_{\E\setminus\E'}$, where $D'=D_{\E\setminus\E'}$. We have $D_\E =(D_{\E\setminus \E'})_{\E'}$, and hence
\begin{equation}
\label{eq:betaee}
\beta_{\E',D_{\E\setminus\E'}}(V)=\beta_{\E,D}(V).    
\end{equation}

Taking $\E'=\emptyset$ in Equation \eqref{eq:betaee},
we get an analogue of Equation \eqref{eq:betasumD} for pseudo-divisors:  for every subsets $V,W\subset V(\Gamma_\E)=V(\Gamma)$, 
\begin{equation}
\label{eq:betasum}
\beta_{\E,D}(V\cup W)+\beta_{\E,D}(V\cap W)=\beta_{\E,D}(V)+\beta_{\E,D}(W)-|E(V, W)\setminus \E|.
\end{equation}

We could have defined $\beta_{\E,D}(V)$ in terms of the refined graph $\Gamma^\E$. Indeed, for every subset $V\subset V(\Gamma_\E)$, we have that $\beta_{\E,D}(V)=\beta_{D}(\widetilde{V})$,
where $\widetilde{V}\subset V(\Gamma^\E)$ is: 
\[
\widetilde{V}:=V \cup\{v_e;e\in \E\cap(E(V)\cup E(V,V^c))\}.
\]
Note that for every subset $V\subset V(\Gamma^\E)$, 
\begin{equation}
    \label{eq:Wtil}
    \beta_D(V)\geq \beta_D(\widetilde{V\cap V(\Gamma)})
\end{equation}
with equality if and only if $V=\widetilde{V\cap V(\Gamma)}$.

  We say that a pseudo-divisor $(\E,D)$ is \emph{simple} if $\E$ is non-disconnecting.  We say that $(\E,D)$ is $\mu$-\emph{semistable} if $\beta_{\E,D}(V)\geq 0$  for every subset $V\subset V(\Gamma)$. One can see that $(\E,D)$ is semistable if and only  if $D_\E$ is $\mu_\E$-semistable on $\Gamma_\E$, or, equivalently, if and only if $D$ is $\mu^\E$-semistable on $\Gamma^\E$ (see \cite[Proposition 4.6]{AP1}).

  Given a graph $\Gamma$ we define the poset $\pd(\Gamma)$ of pseudo-divisors on $\Gamma$ with partial order $(\E,D)\geq(\E',D')$ if $(\E,D)$ specializes to $(\E',D')$.
  We also define the category $\PD_g$ whose objects are triples $(\Gamma,\E,D)$, where $\Gamma$ is a stable weighted graph of genus $g$  and $(\E,D)$ is a pseudo-divisor on $\Gamma$, and the morphisms are given by specializations. We let $\pd_g$ be the poset $\PD_g/\sim$ where $(\Gamma,\E,D)\sim(\Gamma',\E',D')$ if they are isomorphic.
  \begin{Rem}
  \label{rem:semispec}
  An easy adaptation of \cite[Proposition 4.6]{AP1} implies that semistability is preserved under graph specialization.
  \end{Rem}
  
  Given a graph $\Gamma$, a degree-$d$ polarization $\mu$ and a vertex $v_0\in V(\Gamma)$, we define $\sst_\mu(\Gamma)$ and 
  %$\ps_\mu(\Gamma)$ and 
  $\qs_{v_0,\mu}(\Gamma)$ as the subposets of $\pd(\Gamma)$  of $\mu$-semistable %$\mu$-polystable 
  and $(v_0,\mu)$-quasistable pseudo-divisors, respectively. 
  
%  If $\mu$ is a genus-$g$ universal polarization, we also define the categories $\SST_{\mu,g}$ (respectively,  $\QS_{\mu,g}$) whose objects are triples $(\Gamma,\E,D)$ where $\Gamma$ is a genus-$g$ stable weighted graph (respectively, a genus-$g$ stable weighted graph with 1 leg $v_0$) and $(\E,D)$ is a $\mu$-semistable (respectively, $(v_0,\mu)$-quasistable) pseudo-divisor on $\Gamma$, and morphisms are specializations. We let $\sst_{\mu,g}$ and $\qs_{\mu,g}$ be the posets $\SST_{\mu,g}/\sim$ and $\QS_{\mu,g}/\sim$, where the relations $\sim$ are given by isomorphisms.
 % \footnote{I think we can erase this paragraph}
  %By Remark \ref{rem:polysemi} we have natural inclusions $\qs(\Gamma)\subset \sst(\Gamma)$ and $\ps(\Gamma)\subset \sst(\Gamma)$. We say that a polarization $\mu$ on $\Gamma$ is \emph{nondegenerate} if the inclusions above are equalities \cite{KP}.

  We define the \emph{rank} map $\rk\col \pd(\Gamma)\to \mathbb{N}$ as 
  \[
  \rk(\E,D)=|\E|-b_0(\Gamma_\E)+b_0(\Gamma).
  \]
  Note that $\rk(\E,D)\leq b_1(\Gamma)$. Also, if $(\E,D)\geq(\E',D')$ then $\rk(\E,D)\geq \rk(\E',D')$.
  
  \begin{Lem}
  \label{lem:middle}
  Let $\Gamma$ be a graph.
  Let $(\E_1,D_1)$, $(\E_2,D_2)$  be pseudo-divisors on $\Gamma$ such that $(\E_1,D_1)\geq (\E_2,D_2)$. For each subset $\E\subset E(\Gamma)$ such that $\E_2\subset \E\subset \E_1$, there exists a unique pseudo-divisor $(\E,D)$ such that 
  \[
  (\E_1,D_1)\geq (\E,D)\geq (\E_2,D_2).
  \]
  In particular if $(\E_1,D_1)$ is $\mu$-semistable for some polarization $\mu$ on $\Gamma$, then $(\E,D)$ is $\mu$-semistable.
  \end{Lem}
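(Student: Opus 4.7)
The plan is to build the intermediate pseudo-divisor by factoring the specialization realizing the hypothesis and pushing $D_1$ forward along the first factor. The relation $(\E_1,D_1)\geq(\E_2,D_2)$ in $\pd(\Gamma)$ is witnessed by a graph specialization $\iota_{12}\colon \Gamma^{\E_1}\to\Gamma^{\E_2}$ that, for each $e\in\E_1\setminus\E_2$, contracts exactly one of the two edges $e^s,e^t$ sitting above $e$, and that satisfies $(\iota_{12})_*D_1=D_2$.

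Given $\E$ with $\E_2\subset\E\subset\E_1$, I would split the contractions of $\iota_{12}$ into those lying above $\E_1\setminus\E$ and those above $\E\setminus\E_2$; this yields a factorization $\iota_{12}=\iota_2\circ\iota_1$ with $\iota_1\colon \Gamma^{\E_1}\to\Gamma^{\E}$ and $\iota_2\colon \Gamma^\E\to\Gamma^{\E_2}$. Setting $D:=(\iota_1)_*D_1$, I would check the pseudo-divisor condition as follows: for every $e\in\E$, the two edges $e^s,e^t$ incident to $v_e\in V(\Gamma^{\E_1})$ lie above $e\in\E$ and hence are not contracted by $\iota_1$; so the $\iota_1$-preimage of $v_e$ is $\{v_e\}$ and $D(v_e)=D_1(v_e)=-1$. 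The sandwich $(\E_1,D_1)\geq(\E,D)\geq(\E_2,D_2)$ then follows: $\iota_1$ witnesses the first inequality, and $(\iota_2)_*D=(\iota_2\circ\iota_1)_*D_1=D_2$ witnesses the second.

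For uniqueness, suppose $(\E,D')$ also satisfies the sandwich. Both $D$ and $D'$ equal $-1$ at each exceptional vertex, so they agree there; at $v\in V(\Gamma)$, the two specializations witnessing $(\E_1,D_1)\geq(\E,D')$ and $(\E,D')\geq(\E_2,D_2)$ dictate the value $D'(v)$ in terms of $D_1(v)$, $D_2(v)$, and the combinatorics of how the exceptional vertices above $\E_1\setminus\E_2$ distribute between the two factors. The main obstacle will be to show that these constraints determine $D'$ uniquely and intrinsically, i.e., independently of the particular $\iota_{12}$ chosen; equation \eqref{eq:betaee}, which ties $\beta_{\E,D'}(V)$ directly to $\beta_{\E_1,D_1}(V)$ for each $V\subset V(\Gamma)$, should provide the key rigidity, as it pins down the degree of $D'$ on every subset of vertices without reference to the factorization. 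Finally, the $\mu$-semistability of $(\E,D)$ is immediate from the sandwich together with Remark \ref{rem:semispec}, since $(\E,D)$ is a specialization of the $\mu$-semistable $(\E_1,D_1)$.
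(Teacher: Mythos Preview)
Your approach is essentially the paper's: take the specialization $\iota\colon\Gamma^{\E_1}\to\Gamma^{\E_2}$ witnessing $(\E_1,D_1)\geq(\E_2,D_2)$, factor it through $\Gamma^{\E}$, and set $D=(\iota_1)_*D_1$; semistability then follows from Remark~\ref{rem:semispec}. For uniqueness the paper is more direct than your sketch: it simply notes that $\iota$ factors through \emph{unique} specializations $\iota_1\colon\Gamma^{\E_1}\to\Gamma^{\E}$ and $\Gamma^{\E}\to\Gamma^{\E_2}$, so $D$ is determined; your detour through constraints on $D'(v)$ and the appeal to equation~\eqref{eq:betaee} is not needed and is not the route the paper takes.
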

  \begin{proof}
    The condition $(\E_1,D_1)\geq (\E_2,D_2)$ gives rise to a graph specialization $\iota\col\Gamma^{\E_1}\to \Gamma^{\E_2}$. Note that $\iota$ factors through unique specializations $\iota_1\col\Gamma^{\E_1}\to \Gamma^{\E}$ and $\Gamma^{\E}\to \Gamma^{\E_2}$. It is enough to take the divisor $D=\iota_{1*}(D_1)$ on $\Gamma^{\E}$. The last sentence follows from 
     Remark \ref{rem:semispec}.
  \end{proof}

  %Let $\iota\col (\E_1,D_1)\to(\E_2,D_2)$ be a specialization of pseudo-divisors on a graph $\Gamma$. For each subset $V\subset V(\Gamma)$, we define 
 %\begin{equation}
%\label{eq:}
% E(V,\iota)=\{e \in E(V,V^c)\cap \E_1; \iota(v_e)\in V\}.
% \end{equation}
% Note that $E(V,\iota)\cap \E_2=\emptyset$.

\begin{Lem}
\label{lem:betaspec1}
Let $\Gamma$ be a graph and $\mu$ be a degree-$d$ polarization on $\Gamma$. Let $\iota\col (\E_1,D_1)\to (\E_2,D_2)$ be a specialization of degree-$d$ pseudo-divisors on $\Gamma$. For each $V\subset V(\Gamma)$, we have $\beta_{\E_1,D_1}(V)\leq \beta_{\E_2,D_2}(V)$.
\end{Lem}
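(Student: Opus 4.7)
The plan is to reduce the inequality to a single application of Equation~\eqref{eq:Wtil} on the refined graph $\Gamma^{\E_1}$. As in the proof of Lemma~\ref{lem:middle}, the specialization $(\E_1,D_1)\ge(\E_2,D_2)$ is realized by a graph specialization $\iota\col \Gamma^{\E_1}\to \Gamma^{\E_2}$ with $D_2=\iota_*(D_1)$; concretely, for each $e\in \E_1\setminus \E_2$ exactly one of the two edges over $e$ in $\Gamma^{\E_1}$ is contracted, merging $v_e$ with one endpoint of $e$ in $V(\Gamma)$. In particular, $\iota$ is the identity on $V(\Gamma)$, and the only vertices of $\Gamma^{\E_1}$ that are moved are the exceptional vertices $v_e$ for $e\in \E_1\setminus \E_2$.

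Next I would check compatibility with the polarizations $\mu^{\E_1}$ and $\mu^{\E_2}$ on the refined graphs. Since $\mu^{\E_i}$ vanishes on all exceptional vertices and equals $\mu$ on $V(\Gamma)$, the push-forward $\iota_*(\mu^{\E_1})$ coincides with $\mu^{\E_2}$. Because every edge of $\Gamma^{\E_1}$ contracted by $\iota$ lies inside some fiber of $\iota$, no such edge can connect a subset $\iota^{-1}(U)$ to its complement, and a standard bookkeeping gives
\[
\beta_{D_2}(U)=\beta_{D_1}(\iota^{-1}(U))\qquad\text{for every } U\subset V(\Gamma^{\E_2}).
\]

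Now I would apply this identity with $U=\widetilde V_2\subset V(\Gamma^{\E_2})$, the set attached to $V$ via Equation~\eqref{eq:Wtil} on the graph $\Gamma^{\E_2}$, obtaining $\beta_{\E_2,D_2}(V)=\beta_{D_1}(\iota^{-1}(\widetilde V_2))$. Since $\iota$ is the identity on $V(\Gamma)$, the intersection $\iota^{-1}(\widetilde V_2)\cap V(\Gamma)$ equals $\widetilde V_2\cap V(\Gamma)=V$. Applying the inequality in Equation~\eqref{eq:Wtil} on $\Gamma^{\E_1}$ to the set $\iota^{-1}(\widetilde V_2)$ then yields $\beta_{D_1}(\iota^{-1}(\widetilde V_2))\ge \beta_{D_1}(\widetilde V_1)=\beta_{\E_1,D_1}(V)$, where $\widetilde V_1$ is computed with respect to $\E_1$. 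Chaining gives the desired $\beta_{\E_1,D_1}(V)\le \beta_{\E_2,D_2}(V)$.

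The only real subtlety is the bookkeeping that $\iota^{-1}(\widetilde V_2)\cap V(\Gamma)=V$ and that polarization and cut-edge counts are preserved under $\iota^{-1}$, but both follow transparently from the structure of the contraction. If preferred, one can bypass \eqref{eq:Wtil} altogether by direct computation: setting $\F=\E_1\setminus \E_2$ and letting $N_V$ be the number of $e\in \F$ whose merged endpoint lies in $V$, an expansion of all three terms yields
\[
\beta_{\E_1,D_1}(V)-\beta_{\E_2,D_2}(V)=N_V-|E(V)\cap\F|-|E(V,V^c)\cap\F|,
\]
which is non-positive because each $e\in \F$ contributes at most one to $N_V$, with edges entirely outside $V$ contributing none.
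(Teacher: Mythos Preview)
Your proof is correct. The paper, however, takes the direct route that you sketch only as an alternative at the end: it expands
\[
\beta_{\E_1,D_1}(V)=\beta_{\E_2,D_2}(V)+\deg(D_1|_V)-\deg(D_2|_V)-\val_{\E_1\setminus\E_2}(V)
\]
and finishes with the elementary observation $D_2(v)\ge D_1(v)-\val_{\E_1\setminus\E_2}(v)$. Your explicit formula with $N_V$ is exactly this computation, just with the merged-endpoint count isolated.

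Your primary argument is genuinely different: you lift both sides to the refined graphs $\Gamma^{\E_i}$, use the preservation of $\beta$ under graph contraction (the content of Lemma~\ref{lem:betaspec2}, which you reprove in passing), and then invoke the monotonicity of Equation~\eqref{eq:Wtil}. This is more conceptual---it explains the inequality as an instance of the $\widetilde V$-minimality property rather than as an ad hoc count---and it makes transparent when equality holds (namely when $\iota^{-1}(\widetilde V_2)=\widetilde V_1$, i.e., every exceptional vertex over $\F$ that lands in $V$ was already required by the $\widetilde{\ }$-construction). The cost is that you are implicitly using Lemma~\ref{lem:betaspec2} before it appears; since its proof is independent and you sketch the verification anyway, there is no circularity, but in the paper's ordering the direct three-line expansion is self-contained.
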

\begin{proof}
%\[
%\beta_{\E_1,D_1}(V)= %\beta_{\E_2,D_2}(V)+|E(V,\iota)|-|E(V,V^c)\cap (\E_1\setminus \E_2)|.
%\]
%In particular, 
We compute:
\begin{align*}
\beta_{\E_1,D_1}(V)&=\deg(D_1|_V)-\mu_{\E_1}(V)+\frac{\delta_{\Gamma_{\E_1},V}}{2}\\
                   &=\deg(D_1|_V)-\left(\mu_{\E_2}(V)+\frac{\val_{\E_1\setminus\E_2}(V)}{2}\right)+\\
                   &\;\;\;\;\;+\frac{\delta_{\Gamma_{\E_2},V}-\val_{\E_1\setminus\E_2}(V)}{2}\\
                   &=\beta_{\E_2,D_2}(V)+\deg(D_1|_V)-\deg(D_2|_V)-\val_{\E_1\setminus\E_2}(V).
\end{align*}
However, $D_2(v)\geq D_1(v)-\val_{\E_1\setminus\E_2}(v)$ for every $v\in V$. The result follows.
\end{proof}

\begin{Lem}
\label{lem:betaspec2}
 Let $\iota\col \Gamma_1\to \Gamma_2$ be a specialization of graphs and let $(\E_1,D_1)$ be a pseudo-divisor on $\Gamma_1$. Define $(\E_2,D_2)=\iota_*(\E_1,D_1)$. For every $V\subset V(\Gamma_2)$, we have
 \[
 \beta_{\E_2,D_2}(V)=\beta_{\E_1,D_1}(\iota^{-1}(V)).
 \]
\end{Lem}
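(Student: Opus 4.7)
The plan is a term-by-term comparison of the two sides using the definition $\beta_{\E,D}(V)=\deg(D|_V)-\mu_\E(V)+\delta_{\Gamma_\E,V}/2$. Let $\mathcal{F}:=E(\Gamma_1)\setminus E(\Gamma_2)$ be the set of edges contracted by $\iota$, so that $\E_2 = \E_1\setminus\mathcal{F}$, and set $U:=\iota^{-1}(V)$. The crucial combinatorial observation is that every $e\in\mathcal{F}$ has its two endpoints identified by $\iota$, so either both endpoints lie in $U$ or both lie in $U^c$; in particular $E(U,U^c)\cap\mathcal{F}=\emptyset$.

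With this observation, the boundary term is immediate: $E(\Gamma_2)\setminus\E_2 = E(\Gamma_1)\setminus(\E_1\cup\mathcal{F})$, and since no $\mathcal{F}$-edge crosses $(U,U^c)$, the edges counted by $\delta_{\Gamma_{1,\E_1},U}$ are in bijection under $\iota$ with those counted by $\delta_{\Gamma_{2,\E_2},V}$, giving equality. For the degree I would trace through the induced specialization $\iota_{\E_1}\col\Gamma_1^{\E_1}\to\Gamma_2^{\E_2}$: for each $e\in\E_1\cap\mathcal{F}$, both edges over $e$ are contracted, identifying the exceptional vertex $v_e$ (which carries value $-1$) with the common image of the endpoints of $e$. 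Summing over $V$, this yields
\[
\deg(D_2|_V)=\deg(D_1|_U)-|E(U)\cap\E_1\cap\mathcal{F}|.
\]

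The polarization term contributes a matching correction: $(\iota_*\mu)(V)=\mu(U)$ by definition, so
\[
\mu_{\E_1}(U)-(\iota_*\mu)_{\E_2}(V)=\tfrac{1}{2}\bigl(\val_{\E_1}^{\Gamma_1}(U)-\val_{\E_2}^{\Gamma_2}(V)\bigr).
\]
Only edges in $\E_1\cap\mathcal{F}$ contribute to this valence difference, and each such edge having both endpoints in $U$ contributes exactly $2$ to $\val_{\E_1}^{\Gamma_1}(U)$ and $0$ to $\val_{\E_2}^{\Gamma_2}(V)$, yielding the value $|E(U)\cap\E_1\cap\mathcal{F}|$. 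Combining the three computations, the two $|E(U)\cap\E_1\cap\mathcal{F}|$ corrections cancel and $\beta_{\E_2,D_2}(V)=\beta_{\E_1,D_1}(U)$ follows. I do not foresee any real obstacle; the only step requiring genuine care is tracking the induced specialization $\iota_{\E_1}$ on exceptional vertices sitting over edges in $\E_1\cap\mathcal{F}$, which is what forces the extra $-1$ contributions to the degree and matches them to the valence correction on the polarization side.
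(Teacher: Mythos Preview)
Your proof is correct and follows essentially the same approach as the paper's: a term-by-term comparison in which the degree and polarization terms each differ by the quantity $|E(U)\cap\E_1\cap\mathcal{F}|=|(\E_1\setminus E(\Gamma_2))\cap E(\iota^{-1}(V))|$, while the boundary terms agree because contracted edges never cross $(U,U^c)$. The paper records these three identities tersely and concludes; your version is a bit more explicit about why the valence difference equals twice that count, but the argument is the same.
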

\begin{proof}
 Note that $\E_2=\E_1\cap E(\Gamma_2)$ and 
 \begin{align*}
  \deg(D_2|_V) &= \deg(D_1|_{\iota^{-1}(V)})-|(\E_1\setminus E(\Gamma_2))\cap E(\iota^{-1}(V))|,  \\
 \iota_*(\mu)_{\E_2}(V)& = \mu_{\E_1}(\iota^{-1}(V))-|(\E_1\setminus E(\Gamma_2))\cap E(\iota^{-1}(V))|,\\ \delta_{\Gamma_2,V}& = \delta_{\Gamma_1,\iota^{-1}(V)}.
 \end{align*}
 (In the formula we consider $E(\Gamma_2)$ as a subset of $E(\Gamma_1)$.)
The result follows.
\end{proof}

\subsection{Divisors on a tropical curve} 
 
 Let $X$ be a tropical curve.  A \emph{divisor} on $X$ is a map $\D\col X\to \mathbb{Z}$ such that $\D(p)\neq0$ for finitely many points $p\in X$. The degree of a divisor $\D$ on $X$ is the integer $\deg \D:=\sum_{p\in X} \D(p)$. We say that $\D$ is \emph{effective} if $\D(p)\ge0$ for every $p\in X$. The set $\Div(X)$ of divisors on $X$ is an Abelian group.
 
 A \emph{rational function} on $X$ is a continuous, piece-wise linear function $f\col X\ra \R$ with integer slopes. A \emph{principal divisor} on $X$ is a divisor of type
\[
\Div_X(f):=\sum_{p\in X} \ord_p(f) p\in \Div(X),
\]
where $f$ is a rational function on $X$ and $\ord_p(f)$ is the sum of the incoming slopes of $f$ at $p$. A principal divisor has degree zero. The set $\Prin(X)$ of principal divisors on $X$ is a subgroup of $\Div(X)$.

Given divisors $\D_1,\D_2$ on $X$, we say that $\D_1$ and $\D_2$ are \emph{equivalent} if $\D_1-\D_2$ is a principal divisor. The \emph{Picard group} $\Pic(X)$ of $X$ is defined as
\[
\Pic(X):=\Div(X)/\Prin(X).
\] 
The \emph{degree-$d$ Picard group} of $X$ is $\Pic^d(X):=\Div^d(X)/\Prin(X)$.
 
 The \emph{Jacobian} of $X$ is the real torus defined as:
 \[
 J(X)=\Omega(X)^\vee/H_1(X,\mathbb Z)
 \]
 where $\Omega(X)$ is the space of harmonic $1$-forms on $X$ (we refer to \cite[Section 3]{BF} and \cite[Section 6]{MZ} for more details). Recall that there is a canonical isomorphism between $\Pic^0(X)$ and $J(X)$.
 
 \subsection{Unitary divisors on tropical curves}
 
 In this paper we will restrict our attention to a special type of divisors on a tropical curve, called unitary divisors. These divisors will be enough later to define a universal tropical Jacobian.

  \begin{Def}
  Let $X$ be a tropical curve and 
   $\Gamma$ a model of $X$. A divisor $\D$ on $X$ is a \emph{$\Gamma$-unitary divisor} (or simply \emph{unitary divisor} if the model of $X$ is clear) if for every $e\in E(\Gamma)$ we have $\D(p)=0$ for each point $p\in e^\circ$, except for at most one point $p_{\D,e}\in e^0$ for which  $\D(p_{\D,e})=-1$.  
  \end{Def}

  Throughout the paper, we will use the notation $p_{\D,e}$ to denote the point of $e^0$ (if it exists) such that $\D(p_{\D,e})=-1$. 
  
  Let us define the combinatorial type of  a unitary divisor $\D$ on $X$. First, we let 
  \[
  \E=\{e\in E(\Gamma); \exists\; p\in e^\circ \text{ such that }\D(p)=-1\}.
  \]
 Then, we define the divisor $D$ on $\Gamma^\E$ such that $D(v)=\D(v)$ for every $v\in V(\Gamma)$ and $D(v_e)=-1$ for every exceptional vertex $v_e\in V(\Gamma^\E)$. In this way, we obtain a pseudo-divisor $(\E,D)$ on $\Gamma$, called
  the \emph{combinatorial type} of $\D$.

Let $X$ be a tropical curve and $\mu$ be a degree $d$ polarization on $X$. From now on, we  let $\Gamma$ be the $\mu$-model of $X$.
  For a divisor $\D$ on $X$ and a subcurve $Z\subset X$, we let
  \[
  \beta_\D(Z)=\deg(\D|_Z)-\mu(Z)+\frac{\delta_{X,Z}}{2}.
  \]
  We say that $\D$ is $\mu$-semistable if $\beta_\D(Z)\geq0$ for every $Z\subset X$. By \cite[Proposition 5.3]{AP1}, if $\D$ is a unitary  $\mu$-semistable divisor, then its combinatorial type $(\E,D)$ is a $\mu$-semistable pseudo-divisor on $\Gamma$.

 By the first and second paragraph of the proof of \cite[Proposition 5.3]{AP1} and Equation \eqref{eq:Wtil}, for every $V\subset V(\Gamma^\E)$ we have:
 \begin{equation}
 \label{eq:betaWZW}
 \beta_\D(X_V)=\beta_D(V)\geq\beta_{\E,D}(V\cap V(\Gamma)), \text{ with equality iff } V=\widetilde{V\cap V(\Gamma)},
 \end{equation}
  and $\beta_\D(Z)\geq \beta_\D(X_{Z\cap V(\Gamma^\E)})$,
  for every subcurve $Z\subset X$.
  We deduce that
 \begin{equation}
 \label{eq:betaXGamma}
\beta_\D(Z)\geq \beta_{\E,D}(Z\cap V(\Gamma)),     
 \end{equation}
for every subcurve $Z\subset X$.

Consider two degree-$d$ divisors $\D_1$ and $\D_2$ on $X$ of combinatorial types $(\E_1,D_1)$ and $(\E_1,D_2)$. We will often have to decide whether or not they are equivalent. This motivates the following definitions.

\begin{Def}
Let $X$ be a tropical curve with a fixed model $\Gamma$.
 A \emph{difference divisor} on $X$ is a divisor $\D$ which can be written as $\D_1-\D_2$, for $\D_1$ and $\D_2$ unitary divisors on $X$ of the same degree.  Equivalently,  a difference divisor is a degree-$0$ divisor of type
   \[
   \D=\D_0+\sum_{e\in \E_1}p_e-\sum_{e\in \E_2}q_e
   \]
   where $\D_0$ is supported on $V(\Gamma)$, the sets $\E_1,\E_2$ are subsets of $\E(\Gamma)$, and $p_e,q_e\in e^\circ$.
\end{Def}

An important case is that in which the divisors defining a difference divisor have the same combinatorial type.

\begin{Def}
 An $\E$-\emph{divisor} is the difference divisor of two unitary  divisors  $\D_1$ and $\D_2$ with the same combinatorial type $(\E,D)$.
Equivalently, a unitary divisor  
is a divisor of type
\[
\D=\sum_{e\in \E} (p_e-q_e),
\]
   where $p_e,q_e\in e^\circ$ are, possibly equal, points in the interior of $e$.
\end{Def}

We are interested in properties of principal $\E$-divisors. Before, we need a lemma.

\begin{Lem}
\label{lem:fmin}
Let $X$ be a tropical curve with model $\Gamma$.
 Let $\D$ be a principal difference divisor on $X$, and write $\D=\Div(f)$, for some rational function $f$ on $X$. Let $Z$ be the subcurve of $X$ where $f$ attains its minimum and set $V=V(\Gamma)\cap Z$. Then $V$ is nonempty and no connected component of $Z$ is contained in the interior of any edge of $\Gamma$. Moreover, $X_V\subset Z$ and $X_{V^c}\subset Z^c$.
\end{Lem}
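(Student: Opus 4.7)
The plan is to exploit the constraint that $\D$ is a difference divisor: on each edge $e$ of $\Gamma$, at most one interior point of $e^\circ$ can carry a $+1$ and at most one can carry a $-1$. Coupled with the fact that $\ord_p(f)$ records the slope jump of $f$ at $p$, this tightly restricts the shape of $f$ on edges whose endpoints lie in $V$. Throughout, set $m := \min_X f$, which is attained since $X$ is compact, so $Z = f^{-1}(m)$ is nonempty and closed.

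The first key step is to show that no connected component of $Z$ lies in $e^\circ$ for any edge $e$. Such a component must be either a single point $\{a\}$ or a closed subinterval $[a,b]\subset e^\circ$, since a closed connected subset of the open interval $e^\circ$ has that form. In the first case $a$ is a strict local minimum of $f$ in $e^\circ$, so both outgoing slopes of $f$ at $a$ are nonzero of the same sign, forcing $|\ord_a(f)| \geq 2$ and contradicting $|\D(a)|\leq 1$. In the second case, at each of $a$ and $b$ the outgoing slope into the plateau is $0$ while the outgoing slope into $Z^c$ has absolute value $\geq 1$, so $\D(a)$ and $\D(b)$ are nonzero of the same sign; this contradicts the fact that at most one interior point of $e$ can carry $+1$ and at most one can carry $-1$. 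Two consequences follow: first, every component of $Z$ meets $V(\Gamma)$, so $V\neq\emptyset$; second, for any edge $e$ with both endpoints in $V^c$, a $Z$-point in $e^\circ$ would lie in a component of $Z$ trapped in $e^\circ$, which is forbidden, giving $X_{V^c}\subset Z^c$.

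For $X_V\subset Z$, take an edge $e$ with both endpoints $v, v'\in V$, parametrize $e\cong[0,\ell]$, and let $s_v, s_{v'} \geq 0$ be the outgoing slopes of $f$ at $v$ and $v'$ along $e$; nonnegativity follows from $f(v)=f(v')=m$ and $f\geq m$. A telescoping of slope jumps across $e^\circ$ expresses $\sum_{p\in e^\circ}\ord_p(f)$ up to sign as $s_v+s_{v'}$, while the difference-divisor structure forces $\sum_{p\in e^\circ}\D(p)\in\{-1,0,1\}$. Hence $s_v+s_{v'}\in\{0,1\}$. A case analysis now eliminates every nontrivial configuration: the case $s_v+s_{v'}=0$ with one $+1$ jump and one $-1$ jump on $e^\circ$ is ruled out because whichever jump comes first forces either $f<m$ somewhere on $e$ or $f(v')>m$; the case $s_v+s_{v'}=1$ with its single $-1$ jump is likewise ruled out by tracking $f$ along $e$. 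The only surviving configuration is $s_v=s_{v'}=0$ with no interior slope jumps, which gives $f\equiv m$ on $e$ and hence $e\subset Z$.

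The main obstacle is this last case analysis. It is essential to use the sharp structural property of a difference divisor (at most one $+1$ and one $-1$ on each $e^\circ$) rather than just the pointwise bound $|\D|\leq 1$: the delicate configurations involve small, balanced oscillations of $f$ above $m$ that are only excluded by combining the endpoint conditions $f(v)=f(v')=m$ with the global lower bound $f\geq m$.
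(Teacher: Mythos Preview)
Your argument is correct. The main difference from the paper's proof is how you establish $X_V\subset Z$. The paper observes that your first step has a symmetric counterpart for $Z^c$: if a connected component of $Z^c$ were contained in some $e^\circ$, let $Y$ be the locus where $f$ attains its \emph{maximum} on that component; then the outgoing slopes from $Y$ are both $\leq -1$, so $\deg(\D|_Y)=\deg(\Div(f)|_Y)\geq 2$, contradicting $\deg(\D|_Y)\leq 1$ for a difference divisor. Once this is in hand, $X_V\subset Z$ is immediate: if an edge $e$ joins two vertices of $V\subset Z$ yet meets $Z^c$, then a component of $Z^c$ is trapped in $e^\circ$, which is impossible. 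This completely replaces your telescoping identity $\sum_{p\in e^\circ}\ord_p(f)=s_v+s_{v'}$ and the ensuing case analysis. Your direct route works and has the virtue of making the slope structure on such an edge fully explicit, but the paper's dual min/max argument is shorter and more symmetric. One small wording issue: in the case $s_v+s_{v'}=1$ the unique jump on $e^\circ$ has $\D$-value $+1$ (equivalently, the slope \emph{drops} by $1$ there), not $-1$; your tracking argument is unaffected.
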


\begin{proof}
We argue that no connected component of $Z^c$ is contained in the interior of any edge of $\Gamma$. Assume, by contradiction, that $Z^c$ has such a connected component. Let $Y$ be the locus where $f$ attains its maximum on this component. Then 
\[
1\geq \deg(\D|_Y)=\deg(\Div(f)|_Y)\geq 2,  
\]
where the inequality in the left-hand side follows because $\D$ is a difference divisor. This is a
contradiction. The same argument proves that $Z^c$ contains no edges $e$ whose vertices are in $Z$.

Similarly, no connected component of $Z$ is contained in the interior of any edge. Indeed, if $Y$ is a connected component contained in the interior of some edge, then 
\[
-1\leq \deg(\D|_Y)=\deg(\Div(f)|_Y)\leq -2,  
\]
which is a contradiction. \par
  Therefore, $V$ is nonempty. Moreover, if $e\in E(\Gamma)$ is an edge connecting vertices $v$ and $w$ in $V$, then  $v,w\in Z$. If $e^0\cap Z^c\neq \emptyset$, then $Z^c$ would have a component in the interior of $e$ or it would contain all of $e$, which is not possible by the first part of the proof. This proves that $X_V\subset Z$. Arguing similarly, we have that $X_{V^c}\subset Z^c$.
\end{proof}

Let $X$ be a tropical curve with a model $\Gamma$. 
Let $\E$ be a subset of $E(\Gamma)$ and define $\iota\col \Gamma \to\Gamma':=\Gamma/(E(\Gamma)\setminus \E)$. For each $w\in V(\Gamma')$, define $V_w:=\iota^{-1}(w)$.
%We will choose an orientation for each edge in $\E$ such that, if $w_1,w_2\in V(\Gamma')$ and $e_1,e_2\in E(\Gamma')$ are edges connecting $w_1$ and $w_2$, then $s(e_1)=s(e_2)$.

\begin{Lem}
\label{lem:Eunitary}
Let $X$ be a tropical curve with model $\Gamma$.
Let $\D=\sum_{e\in \E}(p_e-q_e)$ be a principal $\E$-divisor on $X$, and write $\D=\Div(f)$, for a rational function $f$ on $X$. For every $e\in \E$ with $p_e\ne q_e$, consider the orientation on $e$ induced by $\overrightarrow{q_ep_e}$. Then:
\begin{enumerate}
    \item[(1)] the slope of $f$ is $0$ everywhere, except on the segments $\overrightarrow{q_ep_e}$, where it is $1$.
    \item[(2)] 
    for every edge $e\in E(\Gamma)$, we have $\ell(\ol{q_ep_e})=f(t(e))-f(s(e))$.
    \item[(3)] If $e_1$ and $e_2$ are edges connecting $V_{w_1}$ and $V_{w_2}$, with $w_1\neq w_2$, then $\ell(\ol{q_{e_1}p_{e_1}})=\ell(\ol{q_{e_2}p_{e_2}})$ and  $\iota(t(e_1))=\iota(t(e_2))$.
    \item[(4)] if $e$ is an edge connecting $V_w$ with itself, then $p_e=q_e$.
    \item[(5)]
    If $\Gamma_\E$ is connected, then $\D=0$.
     \end{enumerate}
\end{Lem}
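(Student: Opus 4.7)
The strategy is to prove (1) first using Lemma \ref{lem:fmin} applied to both $f$ and $-f$, and then derive (2)--(5) as direct consequences.

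For (1), apply Lemma \ref{lem:fmin} to obtain a nonempty $V \subset V(\Gamma)$ with $X_V \subset Z := f^{-1}(\min f)$. The key observation is that at each vertex $v \in V$, every outgoing slope of $f$ is nonnegative (since $f$ achieves its minimum at $v$), and the sum of outgoing slopes at $v$ equals $-\ord_v(f) = -\D(v) = 0$ (because $\D$ is supported in the interiors of edges); hence every outgoing slope at $v$ is zero. Translating to the slope profile on each edge $e$ incident to $v$: if $e$ is non-jumping (meaning $e \notin \E$ or $e \in \E$ with $p_e = q_e$), then $f$ is constant on $e$ with value $\min f$, forcing the other endpoint also into $V$; if $e$ is jumping ($e \in \E$ with $p_e \neq q_e$), then the outer slope is zero, so the slope profile along $e$ in the $\overrightarrow{q_e p_e}$ orientation is $(0, 1, 0)$. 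Moreover such a $v$ must equal $s(e)$, since otherwise $f(s(e)) = f(v) - \ell(\overline{q_e p_e}) < \min f$, contradicting the minimum. The symmetric argument applied to $-f$ at vertices of $V_+ := V(\Gamma) \cap f^{-1}(\max f)$ yields the dual conclusions.

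To extend $s_e = 0$ from edges incident to $V \cup V_+$ to every edge of $\Gamma$, we use the uniqueness of $f$ up to an additive constant (valid since $X$ is connected and $\Div(f) = \D$). Construct a candidate rational function $h$ on $X$ with the prescribed slope profile: constant on each non-jumping edge and on each outer part of a jumping edge, and slope $+1$ on each segment $\overrightarrow{q_e p_e}$. Well-definedness of $h$ globally is equivalent to the cycle identity $\sum_{e \in \gamma} \ell(\overline{q_e p_e}) \cdot \epsilon_\gamma(e) = 0$ for every cycle $\gamma$ in $\Gamma$, where $\epsilon_\gamma(e) \in \{-1, 0, 1\}$ records the traversal of $e$ relative to the $\overrightarrow{q_e p_e}$ orientation (zero if $e \notin \gamma$ or $p_e = q_e$). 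The principality of $\D$ forces this cycle identity: combining the vertex conservation law with the cycle integrations of $df$ gives a linear system for the slope vector of $f$, and any putative nonzero integer solution would require $|s_e| \geq 1$ on some edge, which is incompatible with the strict bound $0 < \ell(\overline{q_e p_e}) < \ell(e)$ on the lengths of jumping segments. Thus $h$ exists, $\Div(h) = \D$, and uniqueness gives $f = h +$ constant, proving (1).

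Once (1) is in hand, (2) follows by integrating the slope of $f$ along $e$ in the $\overrightarrow{q_e p_e}$ direction: $f(t(e)) - f(s(e)) = 0 \cdot (\ell(e) - \ell(\overline{q_e p_e})) + 1 \cdot \ell(\overline{q_e p_e}) = \ell(\overline{q_e p_e})$ (and reduces to $f$ being constant on $e$ in the non-jumping case). For (3), apply (2) to the non-$\E$ edges inside a connected component $V_w$ of $\Gamma_\E$: $f$ is constant on $X_{V_w}$ with some value $c_w$. For a jumping edge $e$ connecting $V_{w_1}$ and $V_{w_2}$, (2) gives $\ell(\overline{q_e p_e}) = c_{\iota(t(e))} - c_{\iota(s(e))}$, which depends only on $(w_1, w_2)$; positivity of lengths forces $t(e)$ to lie in the component with the larger $c$-value, so for any two such edges $e_1, e_2$ we have $\iota(t(e_1)) = \iota(t(e_2))$ and $\ell(\overline{q_{e_1} p_{e_1}}) = \ell(\overline{q_{e_2} p_{e_2}})$. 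Part (4) is the case $w_1 = w_2$: then $\ell(\overline{q_e p_e}) = c_w - c_w = 0$, i.e., $p_e = q_e$. Finally, (5) follows from (4): if $\Gamma_\E$ is connected, there is a unique $V_w$, so every $e \in \E$ connects $V_w$ to itself and hence satisfies $p_e = q_e$, giving $\D = 0$.

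The main obstacle is the extension step in (1): while the first application of Lemma \ref{lem:fmin} handles edges touching $V \cup V_+$, extending to all edges requires either the candidate-function/cycle-identity argument sketched above or an iterative descent through level sets of $f$. The former approach is cleaner conceptually but requires rigorously ruling out nontrivial integer cycle flows, which is where the strict length bound on jumping segments plays a crucial role.
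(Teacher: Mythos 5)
Items (2)--(5) and the first half of your argument for (1) (the outgoing-slope analysis at vertices of the minimum set, giving the profile $(0,1,0)$ on jumping edges incident to $V$ and constancy on the non-jumping ones) are correct and close to the paper. For the remainder of (1) the paper argues differently: having shown that the slopes of $f$ vanish where the edges of $E(V,V^c)$ meet $X_{V^c}$, it observes that $\D|_{X_{V^c}}=\Div(f|_{X_{V^c}})$ is again a principal $(\E\cap E(V^c))$-divisor and concludes by induction. Your alternative --- build a candidate function $h$ with the prescribed slope profile and invoke uniqueness of $f$ up to an additive constant --- is viable in principle, but it rests entirely on the cycle identity $\sum_{e\in\gamma}\gamma(e)\,\ell(\ol{q_ep_e})=0$, and that is where you have a genuine gap. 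Note also that this identity is exactly the forward direction of Lemma \ref{lem:DLE}, which the paper deduces \emph{from} the present lemma, so you cannot quote it and must prove it from scratch.

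Your two-sentence justification does not do this. Write $s_e$ for the common slope of $f$ on the two outer segments of a jumping edge $e$ (the order conditions at $q_e$ and $p_e$ force the profile $(s_e,s_e+1,s_e)$), and likewise for the constant slope on every other edge; the rise of $f$ along $e$ is then $r_e=s_e\ell(e)+\ell(\ol{q_ep_e})$, with the convention $\ell(\ol{q_ep_e})=0$ when $e\notin\E$ or $p_e=q_e$. Vertex conservation (i.e., $\D$ vanishing on $V(\Gamma)$) makes $(s_e)_e$ an integer flow, and the cycle integrations give $\sum_{e\in\gamma}\gamma(e)r_e=0$. But ``$|s_e|\ge1$ on some edge is incompatible with $0<\ell(\ol{q_ep_e})<\ell(e)$'' is an assertion, not an argument: a single edge carrying $|s_e|\ge 1$ contradicts nothing. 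The missing idea is a global step, for instance: a nonzero integer flow supports a consistently oriented cycle $\gamma$ with $\gamma(e)s_e\ge1$ on every edge of $\gamma$, and for that $\gamma$ one gets $0=\sum_{e\in\gamma}\gamma(e)r_e\ge\sum_{e\in\gamma}\bigl(\ell(e)-\ell(\ol{q_ep_e})\bigr)>0$, a contradiction; alternatively, pair the flow $(s_e)$ against the potential-difference vector $(r_e)$ to get $\sum_e s_e^2\ell(e)=-\sum_e s_e\ell(\ol{q_ep_e})$ and use $|s_e|\le s_e^2$ together with $\ell(\ol{q_ep_e})<\ell(e)$. With either of these supplied your proof is complete; without one of them (or the paper's inductive descent through the level sets of $f$, which you mention but do not carry out), the extension step is unproved.
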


\begin{proof}
   Let $Z$ be the subcurve of $X$ where $f$ attains its minimum. Then, $Z\cap Z^c$ consists of points $q_e\in e^0$ such that $\D(q_e)=-1$, with $e$ running through a subset of $\E$.  Define $V=Z\cap V(\Gamma)$. By Lemma \ref{lem:fmin}, we have that $V\neq\emptyset$ and $X_V\subset Z$. Moreover, $E(V,V^c)\subset \E$. In particular, $V=\bigcup V_{w_i}$, and $p_e=q_e$ for every edge $e\in \E$, such that $e$ connects each $V_{w_i}$ with itself. \par
   
   Since the slope of $f$ is zero on $Z$ and $\D(q_e)=-1$, for each $e\in E(V,V^c)$, the slope of $f$ on the segment $\overrightarrow{q_ep_e}$ must be $1$. Since $\D(p_e)=1$, we have that $f$ has slope $0$ on $e\setminus\ol{p_eq_e}$.
   
   Consider the subcurve $X_{V^c}$. We know that $f$ has slope $0$ on $e\setminus\ol{p_eq_e}$, for every $e\in E(V,V^c)$, hence  $\Div(f)|_{X_{V^c}}=\Div(f|_{X_{V^c}})$. Therefore,  $\D|_{X_{V^c}}=\Div(f|_{X_{V^c}})$, which is a principal $(\E\cap E(V^c))$-unitary divisor on $X_{V^c}$. So item (1) follows by induction. Note that item (2) readily follows from item (1).\par
   
   To prove item (3), let $\rho\col X\to Y$ be the contraction of all subcurves $X_{V_w}$, for $w\in V(\Gamma')$ (note that the graph underlying a model of $Y$ is obtained contracting $\Gamma'$ at its loops). By items (1) and (2), the function $f$ is constant on each $X_{V_w}$, and so $f$  induces a rational function $\widehat{f}$ on $Y$. Moreover, for every edge $e$ connecting vertices $w_1$ and $w_2$ of $\Gamma'$,  
   \[
   \ell(\ol{q_ep_e})=\widehat{f}(w_2)-\widehat{f}(w_1)
   \]
   (assuming that $w_2=t(e)$). This proves item (3).
   
   Items (4) and (5) are consequences of the previous items.
\end{proof}

%\begin{Def}
%Given a tropical curve $X$, a model $\Gamma$ of $X$ and a bond $C\subset %E(\Gamma)$, we say that a rational function on $X$ is a \emph{$C$-rational %function} if, for every $e\in C$, there exists oriented segments %$\overrightarrow{p_eq_e}\subset e $ such that:
%\begin{enumerate}
%\item the points $p_e$ belongs  to the same connected component of %$X\setminus\bigcup_{e\in C} \ol{p_eq_e}^\circ$.
%\item the length of the segment $\ol{p_eq_e}$ does not depend on $e$.
%\item the rational function $f$ has slope $1$ on each of these oriented %segments. 
%\item the rational function $f$ has slope $0$ everywhere else.
%\end{enumerate}
%Notice that $\Div(f)=D_Y$, where $Y\subset X$ is a subcurve with %$\Out_Y(\Gamma)=C$ and $D_Y$ is a chip-firing divisor emanating from $Y$.
%\end{Def}

\section{Special polytopes and cones}

%Let $X$ be a tropical curve with a model $\Gamma$, and $(\E,D)$ be a pseudo-divisor on $\Gamma$.
We define certain polytopes and cones that will play an important role later in defining some polyhedral decomposition of the tropical Jacobian, and a universal tropical Jacobian.  

 Let $X$ be a tropical curve with oriented model $\Gamma$ and length function $\ell$. In what follows, given a subset $\E\in E(\Gamma)$, we denote by $u_e$ a vector of the canonical basis of $\R^\E$ and by  $x_e$ the coordinates of $\R^\E$, for $e\in \E$.  For each  pseudo-divisor $(\E,D)$ of degree $d$ on $\Gamma$, we define the polytope
 \[
K_{\E,D}(X)=\prod_{e\in \E} [0,\ell(e)] \subset \mathbb{R}^{\E}.
\]
Note that
the interior $K^\circ_{\E,D}(X)$ of $K_{\E,D}(X)$ parametrizes unitary divisors on $X$ with combinatorial type $(\E,D)$. Sometimes, we will identify a unitary divisor with combinatorial type $(\E,D)$ and the corresponding point in $K^\circ_{\E,D}(X)$.

Let $p_0$ be a point in $X$.
There exists a linear map $K_{\E,D}(X)\to \Omega(X)^\vee$,
whose composition with the quotient $\Omega(X)^\vee\ra J(X)$ gives rise to a map 
\begin{equation}\label{eq:Abelmap}
K_{\E,D}(X)\ra  J(X),
\end{equation}
called the \emph{Abel map}. The Abel map $K_{\E,D}(X)\to J(X)$ takes each divisor $\D\in K_{\E,D}(X)$ to the equivalence class of $\D-dp_0$ and it is continuous   (see \cite[Theorem 4.1]{BF} and the proof of \cite[Theorem 5.10]{AP1} for more details).

Now, we define another interesting polytope associated to a pseudo-divisor $(\E,D)$.  Consider the subspace $L_\E\subset \mathbb{R}^\E$ generated by the vectors
 \[
 u_V=\underset{s(e)\in V,t(e)\in V^c}{\sum_{e\in \E}} u_{e}-\underset{t(e)\in V,s(e)\in V^c}{\sum_{e\in \E}}  u_{e}
 \]
 when $V$ runs through all subsets of $V(\Gamma)$ such that $E(V,V^c)\subset \E$ (recall that $u_e$ are the vectors of the canonical basis of $\R^\E$). Equivalently, $L_{\E}$ is given by the equations
 \begin{equation}\label{eq:gammae}
 \sum_{e\in \gamma\cap \E} \gamma(e)x_e=0,
 \end{equation}
 where $\gamma$ runs through all cycles of $\Gamma$ that intersect $\E$ (recall Equation \eqref{eq:gammadef} and that  $x_e$ are the coordinates of $\R^\E$). Note that if $\Gamma_\E$ is connected, then $L_\E=0$. We also note that $\dim(L_\E)=b_0(\Gamma_\E)-b_0(\Gamma)$, hence $\dim(P_{\E,D})=\rk(\E,D)$.
 
  Given a pseudo-divisor $(\E,D)$ on the model $\Gamma$ of the tropical curve $X$,
we consider the linear quotient linear map $T_{\E}\col \mathbb{R}^{\E}\to \mathbb{R}^{\E}/L_\E$. 
%where $V_\E$ is generate by  \par
We define the polytope
\[
P_{\E,D}(X)=T_\E(K_{\E,D}(X)).
\]
Note that if $(\E,D)$ is simple, i.e., if $\E$ is non-disconnecting, then 
\begin{equation}
\label{eq:Ksimple}    
P_{\E,D}(X)=K_{\E,D}(X).
\end{equation}
 
 \begin{Exa}\label{exa:KE}
  Let $X$ be a tropical curve with model $\Gamma$ as in  Figure \ref{fig:unitarydiv}. Let $\ell_1,\ell_2,\ell_3$ be the lengths of the edges of $X$. Let $(\E,D)$ be a pseudo-divisor such that $\E=E(\Gamma)$. We have $K_{\E,D}(X)=[0,\ell_1]\times [0,\ell_2]\times [0,\ell_3]\subset \R^\E=\R^3$. The subspace $L_\E\subset \R^3$ is generated by $u_{e_1}+u_{e_2}+u_{e_3}$ or, equivalently, $L_\E$ is given by the equations $x_{e_1}=x_{e_2}=x_{e_3}$. We see that $P_{\E,D}(X)\subset \R^\E/L_\E$ is a hexagon as in Figure \ref{fig:polyjac}. 
  Two unitary divisors $\D_1$ and $\D_2$ with combinatorial type $(\E,D)$ are equivalent if and only if $\D_1-\D_2=\sum_{e\in E(\Gamma)} (p_e-q_e)$, with $p_e,q_e\in e^0$ such that:
  \begin{itemize}
      \item all the $q_e$ lie in the same connected component after separating $X$ at all points $p_e$ in the edge $e$;
      \item
    there is $r\in\R$ such that $\ell(\ol{q_ep_e})=r$ for every $e\in E(\Gamma)$.
  \end{itemize}  These conditions hold if and only if $u_{\D_1}-u_{\D_2}\in L_\E$, where $u_{\D_1},u_{\D_2}$ are the point of $K_{\E,D}$ corresponding to $\D_1,\D_2$.
  \begin{figure}[h]
\begin{tikzpicture}[scale=2.25]
\begin{scope}[shift={(0,0)}]
\draw (0,0) to [out=30, in=150] (1,0);
\draw (0,0) to (1,0);
\draw (0,0) to [out=-30, in=-150] (1,0);
\draw[fill] (0,0) circle [radius=0.02];
\draw[fill] (1,0) circle [radius=0.02];
%\draw[fill] (0.5,0.144) circle [radius=0.02];
%\draw[fill] (0.5,0) circle [radius=0.02];
%\draw[fill] (0.5,-0.15) circle [radius=0.02];
%\node at (-0.07,0) {1};
%\node at (1.05,0) {1};
%\node at (0.55,0.23) {-1};
%\node at (0.55,0.06) {-1};
%\node at (0.55,-0.22) {-1};
\end{scope}
\end{tikzpicture}
\caption{The model a tropical curve}
\label{fig:unitarydiv}
\end{figure}
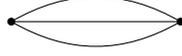
  \end{Exa}

 \begin{Lem}
 \label{lem:DLE}
 Let $X$ be a tropical curve with oriented model $\Gamma$. 
 Let $\D=\sum_{e\in \E}(p_e-q_e)$ be an $\E$-divisor on $X$, for some $\E\subset E(\Gamma)$, and $u_\D$ be the vector in $\mathbb{R}^\E$ given by
 \[
 u_\D=(\varepsilon(e) \ell(\ol{q_ep_e}))_{e\in \E}
 \]
 where $\varepsilon(e)$ is $1$ if $e$ and $\ora{q_ep_e}$ have the same orientation, and $-1$ otherwise. Then $u_\D\in L_\E$ if and only if $\D$ is principal.
 \end{Lem}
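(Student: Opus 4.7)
The plan is to identify, via Lemma \ref{lem:Eunitary}, the combinatorial content of a rational function $f$ on $X$ with $\Div(f) = \D$: such an $f$ is determined by its values on $V(\Gamma)$, and the constraints linking vertex values to the lengths $\ell(\ol{q_e p_e})$ are precisely the edge equations whose cycle orthogonality defines $L_\E$.

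For the forward implication, suppose $\D = \Div(f)$. By items (1) and (4) of Lemma \ref{lem:Eunitary}, $f$ has slope $0$ on $X$ outside the segments $\ora{q_e p_e}$ with $e \in \E$ and $p_e \ne q_e$, where the slope equals $+1$; in particular $f(t(e)) = f(s(e))$ for every $e \notin \E$. On an edge $e \in \E$, rewriting item (2) relative to the fixed orientation of $\Gamma$ gives
\[
f(t(e)) - f(s(e)) = \varepsilon(e)\,\ell(\ol{q_e p_e}),
\]
which remains trivially valid when $p_e = q_e$. Summing over any oriented cycle $\gamma$ in $\Gamma$ telescopes:
\[
0 \;=\; \sum_{e \in E(\Gamma)} \gamma(e)\bigl(f(t(e)) - f(s(e))\bigr) \;=\; \sum_{e \in \gamma \cap \E} \gamma(e)\,\varepsilon(e)\,\ell(\ol{q_e p_e}),
\]
which is exactly the defining cycle equation \eqref{eq:gammae} of $L_\E$ evaluated at $u_\D$. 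Hence $u_\D \in L_\E$.

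For the converse, assume $u_\D \in L_\E$, and extend $u_\D$ by zero to a vector $\tilde{u} \in \R^{E(\Gamma)}$. By \eqref{eq:gammae}, $\tilde{u}$ pairs to zero with every cycle of $\Gamma$, so $\tilde{u}$ lies in the image of the coboundary map $g \mapsto \bigl(g(t(e)) - g(s(e))\bigr)_{e \in E(\Gamma)}$. Pick any preimage $g\col V(\Gamma) \to \R$, and extend $g$ to a continuous piecewise linear function $f$ on $X$ by declaring $f$ to be constant on each edge $e \notin \E$ and on each edge $e \in \E$ with $p_e = q_e$, and, on each remaining $e \in \E$, to have slope $0$ on $e \setminus \ol{q_e p_e}$ and slope $+1$ along $\ora{q_e p_e}$. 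The condition on $g$ exactly guarantees continuity of $f$ at all vertices. A direct incoming-slope computation then gives $\ord_{p_e}(f) = +1$ and $\ord_{q_e}(f) = -1$ for each $e \in \E$ with $p_e \ne q_e$, and $\ord_p(f) = 0$ at every other point, so $\Div(f) = \sum_{e \in \E}(p_e - q_e) = \D$, and $\D$ is principal.

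The main obstacle is bookkeeping the three sign conventions in play—$\varepsilon(e)$ (reorientation by $\ora{q_e p_e}$), $\gamma(e)$ (orientation of the cycle relative to $\Gamma$), and the incoming-slope convention in the definition of $\ord_p$—and checking that they line up, so that the telescoping in the forward direction reproduces the defining equations of $L_\E$ with the correct signs, and so that the function $f$ built in the converse direction has divisor exactly $\D$ rather than its negative.
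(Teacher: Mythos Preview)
Your proof is correct and follows essentially the same approach as the paper: both use Lemma~\ref{lem:Eunitary} to telescope over cycles in the forward direction, and in the converse both construct a potential function from the cycle-orthogonality of $u_\D$. The only cosmetic difference is that the paper builds $f$ by integrating the slope indicator along paths from a basepoint, whereas you invoke the equivalent algebraic fact that a vector orthogonal to all cycles lies in the image of the coboundary map; these are two phrasings of the same construction.
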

 \begin{proof}
 If $\D$ is principal, write $\D=\Div(f)$, for some rational function $f$ on $X$.
 Let $\gamma$ be a cycle of $\Gamma$. Then, by Lemma \ref{lem:Eunitary}, we have
 \[
 \sum_{e\in \gamma\cap \E} \gamma(e)\ell(\ol{q_ep_e})=\sum_{e\in \gamma} \gamma(e)\ell(\ol{q_ep_e})=\pm \sum_{e\in \gamma} \gamma(e)(f(t(e))-f(s(e)))=0.
 \]
 
 Conversely, if $u_\D\in L_\E$, we define a rational function $f$ on $X$ as follows.  Let $g\col X\to \mathbb{R}$ be the function such that $g(p)=1$, if $p\in\ol{q_ep_e}$, and $g(p)=0$, otherwise. Choose a vertex $v\in V(\Gamma)$. For each point $p\in X$, let $\eta_p$ be a path from $v$ to $p$. Then $\int_{\eta_p} \eta_p(t)g(t)dt$ does not depend on the path $\eta_p$ by Equation \eqref{eq:gammae}. We set $f(p)=\int_{\eta_p} \eta_p(t)g(t)dt$, for every $p\in X$. Then $f$ is a rational function on $X$ and $\Div(f)=\sum_{e\in \E}(p_e-q_e)=\D$, so $\D$ is principal.
 \end{proof}

\begin{Prop}
\label{prop:parameter}
The interior $P^\circ_{\E,D}(X)$  parametrizes equivalence classes of unitary divisors $\D$ in X with combinatorial type $(\E,D)$.
\end{Prop}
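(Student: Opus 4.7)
The plan is to set up an explicit bijection between equivalence classes of unitary divisors of combinatorial type $(\E,D)$ and points of $P^\circ_{\E,D}(X)$, built from the identification (noted right after $K_{\E,D}(X)$ is defined) of $K^\circ_{\E,D}(X)$ with the set of such unitary divisors. Explicitly, I send a unitary divisor $\D$ to $T_\E(u_\D)$, where $(u_\D)_e$ is the distance from $s(e)$ to $p_{\D,e}$ along the oriented edge $e$; since $p_{\D,e}\in e^\circ$, the vector $u_\D$ lies in the topological interior of the box $K_{\E,D}(X)$.

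The first main step is to check that this map descends to an injection on equivalence classes, which is where Lemma \ref{lem:DLE} does essentially all the work. Given two unitary divisors $\D_1,\D_2$ with the same combinatorial type $(\E,D)$, their difference is an $\E$-divisor, and a direct unravelling of the sign conventions shows that the vector $u_{\D_1-\D_2}\in\R^\E$ from Lemma \ref{lem:DLE} equals $\pm(u_{\D_1}-u_{\D_2})$. Since $L_\E$ is a linear subspace, Lemma \ref{lem:DLE} then yields
\[
T_\E(u_{\D_1})=T_\E(u_{\D_2}) \iff u_{\D_1}-u_{\D_2}\in L_\E \iff \D_1-\D_2\in \Prin(X) \iff \D_1\sim \D_2,
\]
which gives both well-definedness and injectivity simultaneously.

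The last step is to verify that the image $T_\E(K^\circ_{\E,D}(X))$ coincides with the topological interior $P^\circ_{\E,D}(X)\subset \R^\E/L_\E$. The inclusion $\subset$ is immediate because the linear surjection $T_\E$ is an open map, so it sends the open set $K^\circ_{\E,D}(X)$ to an open subset of $P_{\E,D}(X)$. For the reverse inclusion, given $y\in P^\circ_{\E,D}(X)$ and any interior point $x_0\in K^\circ_{\E,D}(X)$, I would extend the segment from $T_\E(x_0)$ through $y$ slightly past $y$ to a point $y'=y+s(y-T_\E(x_0))\in P_{\E,D}(X)$ (possible since $y$ is interior), lift $y'$ to some $x_1\in K_{\E,D}(X)$, and take the convex combination $x=\tfrac{s}{1+s}x_0+\tfrac{1}{1+s}x_1$, which lies in $K^\circ_{\E,D}(X)$ because $x_0\in K^\circ_{\E,D}(X)$ appears with positive coefficient, and which satisfies $T_\E(x)=y$. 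I expect the only real obstacle to be the bookkeeping around matching the sign conventions of Lemma \ref{lem:DLE} with the coordinatization of $K_{\E,D}(X)$ as an oriented box; everything else is a formal consequence of that lemma together with the openness of linear surjections.
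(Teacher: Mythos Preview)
Your proof is correct and uses the same core idea as the paper: both reduce well-definedness and injectivity to Lemma~\ref{lem:DLE} via the identity $u_{\D_1-\D_2}=u_{\D_1}-u_{\D_2}$ (in fact equality holds on the nose, not just up to sign, once the orientation conventions are unwound). You additionally spell out why $T_\E(K^\circ_{\E,D}(X))=P^\circ_{\E,D}(X)$ via a convexity argument, a point the paper leaves implicit.
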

\begin{proof}
If $\D_1$ and $\D_2$ are two equivalent divisors with combinatorial type $(\E,D)$, we have that $\D_1-\D_2$ is a principal $\E$-divisor. By Lemma \ref{lem:DLE}, we get  $u_{\D_1-\D_2}\in L_\E$. Let $u_{\D_1}, u_{\D_2}\in K_{\E,D}(X)$ be the points  associated to $\D_1$ and $\D_2$. Note that  $u_{\D_1}-u_{\D_2}=u_{\D_1-\D_2}$, then $u_{\D_1}-u_{\D_2}\in L_\E$. We deduce that $\D_1$ and $\D_2$ correspond to the same point in $P_{\E,D}(X)$.

Conversely, if $\D_1$ and $\D_2$ correspond to the same point in $P_{\E,D}(X)$, then $u_{\D_1-\D_2}=u_{\D_1}-u_{\D_2}\in L_\E$. Hence, by Lemma \ref{lem:DLE}, we have that $\D_1$ and $\D_2$ are equivalent. 
\end{proof}

  Let $X$ be a tropical curve with model $\Gamma$. 
  If $(\E,D)\geq(\E',D')$ are two pseudo-divisors on $\Gamma$, there is a natural face-inclusion of polytopes $K_{\E',D'}(X)\subset K_{\E,D}(X)$ induced by the inclusion $\R^{\E'}\subset \R^{\E}$. By Equation \eqref{eq:gammae}, $L_{\E'}=L_\E\cap \R^{\E'}$. Hence, there is a natural inclusion 
  \begin{equation}
\label{eq:PED}  
P_{\E',D'}(X)\subset P_{\E,D}(X).
  \end{equation}
  Note that this inclusion is not necessarily a face-inclusion.\par
   By Proposition \ref{prop:parameter}, the Abel map $K_{\E,D}(X)\to J(X)$, defined in Equation \eqref{eq:Abelmap}, factors through a continuous map
  \begin{equation}
\label{eq:Pcont}
P_{\E,D}(X)\to J(X).
  \end{equation}

 So far, we constructed polytopes that parametrize equivalence classes of unitary divisors on a single tropical curve. We can construct a more universal parameter space in the following way.\par
  Let $\Gamma$ be a graph and $\E$ a subset of $E(\Gamma)$. For each subset $V\subset V(\Gamma)$ such that $E(V,V^c)\subset \E$, define  $\alpha\col E(\Gamma^\E)\to \{0,\pm1\}$ as $\alpha_V(e)=1$ (respectively, $-1$), if $e$ is incident to $V$ (respectively, $V^c$) and lies over an edge in $E(V,V^c)$. Otherwise, we define $\alpha_V(e)=0$.
  We define the vector in $ \R^{E(\Gamma^\E)}$: 
  \begin{equation}\label{eq:omegaV}
  w_V=\sum_{e\in E(\Gamma^\E)} \alpha_V(e)u_e.
 \end{equation}
 (Recall that $u_e$ are the vectors of the canonical basis of $\R^\E$.)
 We let $\EL_\E\subset \R^{E(\Gamma^\E)}$ be the subspace generated by all vectors $w_V$, as $V$ runs through all subsets of $V(\Gamma)$ such that $E(V,V^c)\subset\E$. Also, we denote by 
\[
\T_\E\col \R^{E(\Gamma^\E)}\to \R^{E(\Gamma^\E)}/\EL_{\E}
\]
the linear quotient map. 
We have two natural maps 
\begin{align*}
f_\E\col&\R^{E(\Gamma^\E)}\to \R^{E(\Gamma)}
\;\; \text{ and } \;\;
g_\E\col\R^{E(\Gamma^\E)}\to \R^{\E}
\end{align*}
defined as $f_\E(u_{e'})=u_{e}$ where $e$ is the (unique) edge under $e'$, and $g_\E(u_{e'})=u_{e}$ if $e'=e^s$, otherwise $g_\E(u_{e'})=0$. In other words, the map $f_\E$ takes a vector $(x_{e'})_{e'\in E(\Gamma^\E)}$ to $(y_e)_{e\in E(\Gamma)}$,  while $g_\E$ takes $(x_{e'})$ to $(z_{e})_{e\in \E}$, where 
\begin{equation}\label{eq:yz}
y_{e}=\sum_{e'\text{ over }e} x_{e'} 
\;\;\;\; \text{ and } \;\;\;\;
z_{e}=x_{e^s}.
\end{equation}
%while $g_\E$ takes $(x_{e'})$ to $(z_{e})_{e\in \E}$ with
%\begin{equation}\label{eq:z}
%z_{e}=x_{e^s}.
%\end{equation}
One can see that we have an isomorphism: 
\[
(f_\E,g_\E)\col \R^{E(\Gamma^\E)}\to \R^{E(\Gamma)}\times \R^\E.
\]
We denote by $\tau_{(\Gamma,\E,D)}$ the image cone of $\R^{E(\Gamma^\E)}_{\geq0}$ under $(f_\E,g_\E)$:
\[
\tau_{(\Gamma,\E,D)}:=(f_\E,g_\E)(\R^{E(\Gamma^\E)}_{\geq0})\subset \R^{E(\Gamma)}\times \R^\E.
\]

\begin{Rem}
\label{rem:fEgE}
We note that the cone $\tau_{(\Gamma,\E,D)}$ can be described as 
\[
\tau_{(\Gamma,\E,D)}=\{((x_e)_{e\in E(\Gamma)},(z_e)_{e\in \E}); x_e\geq z_e\geq 0 \text{ for } e \in \E \text{ and } x_e\geq0 \text{ for } e\in E(\Gamma)\}.
\]
(Recall that $x_e$ are the coordinates of $\R^\E$.)
Hence the image of the second projection $\pr\col\tau_{(\Gamma,\E,D)}\to \R^{E(\Gamma)}$ is contained in $\R^{E(\Gamma)}_{\geq0}$, and for every $(x_e)_{e\in E(\Gamma)}\in   \R^{E(\Gamma)}_{\geq0}$,
\begin{equation}
\label{eq:prX}
\pr^{-1}((x_e)_{e\in E(\Gamma)})=\{((x_e)_{e\in E(\Gamma)},(z_e)_{e\in \E}); x_e\geq z_e\geq 0\}.
\end{equation}
So, if $X$ is a tropical curve identified with the point $(\ell(e))_{e\in E(\Gamma)}\in\R^{E(\Gamma)}_{>0}$, where $\Gamma$ and $\ell$ are a model and the length function of $X$ (recall that $\R^{E(\Gamma)}_{>0}$ parametrizes tropical curves $X$ with model $\Gamma$),  we may rephrase Equation \eqref{eq:prX}  as:
\begin{equation}\label{eq:prK}
    \pr^{-1}(X)=\{X\}\times K_{\E,D}(X).
\end{equation}
\end{Rem}

\begin{Lem}
\label{lem:VEker}
Let $\Gamma$ be a graph and $\E$ a subset of $E(\Gamma)$. We have that $(f_\E,g_\E)(\EL_\E)=\{0\}\times L_\E$. In particular, the induced map:
\[
\ol{(f_\E,g_\E)}\col \R^{E(\Gamma^\E)}/\EL_\E\to \R^{E(\Gamma)}\times \R^{\E}/L_\E
\]
is an isomorphism.
\end{Lem}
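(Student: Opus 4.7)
The plan is to prove the first assertion by a direct generator-by-generator computation, and then deduce the second assertion from the fact that $(f_\E,g_\E)$ is already an isomorphism of the ambient vector spaces. Since $\EL_\E$ is generated by the vectors $w_V$ (as $V$ ranges over subsets of $V(\Gamma)$ with $E(V,V^c)\subset\E$) and $L_\E$ is generated by the corresponding vectors $u_V$, it suffices to check that $(f_\E,g_\E)(w_V)=(0,u_V)$ for each such $V$.

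First I would compute $f_\E(w_V)$. The vector $w_V$ is supported on those edges $e'\in E(\Gamma^\E)$ that lie over some $e\in E(V,V^c)$, which is necessarily in $\E$ by hypothesis. For such an $e$, the two edges over it are $e^s$ and $e^t$, incident to $s(e)$ and $t(e)$ respectively. Exactly one of $s(e),t(e)$ lies in $V$ and the other in $V^c$, so $\alpha_V(e^s)$ and $\alpha_V(e^t)$ have opposite signs. Since $f_\E(u_{e^s})=f_\E(u_{e^t})=u_e$, the contributions cancel, giving $f_\E(w_V)=0$.

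Next I would compute $g_\E(w_V)$. Recall $g_\E(u_{e^s})=u_e$ and $g_\E(u_{e^t})=0$, so only the edges of the form $e^s$ with $e\in \E$ contribute. For $e\in E(V,V^c)$, the sign $\alpha_V(e^s)$ is $+1$ if $s(e)\in V$ (so $e^s$ is incident to $V$) and $-1$ if $s(e)\in V^c$. Comparing with the definition of $u_V$, this yields precisely
\[
g_\E(w_V)=\underset{s(e)\in V,\,t(e)\in V^c}{\sum_{e\in\E}} u_e\;-\underset{s(e)\in V^c,\,t(e)\in V}{\sum_{e\in\E}} u_e\;=\;u_V.
\]
Combining these computations, $(f_\E,g_\E)(w_V)=(0,u_V)$, and since both $\EL_\E$ and $L_\E$ are spanned by the respective $w_V$'s and $u_V$'s, we conclude $(f_\E,g_\E)(\EL_\E)=\{0\}\times L_\E$.

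For the final sentence, I would note that $(f_\E,g_\E)\colon \R^{E(\Gamma^\E)}\to \R^{E(\Gamma)}\times\R^\E$ is already an isomorphism (as observed in the paper just before the statement), and we have just shown that it restricts to an isomorphism $\EL_\E\xrightarrow{\sim}\{0\}\times L_\E$. Hence the induced map on quotients $\overline{(f_\E,g_\E)}$ is also an isomorphism. There is no significant obstacle here; the only small subtlety is making sure the orientation conventions in the definitions of $\alpha_V$ and $u_V$ line up, which is settled by checking the two cases $s(e)\in V$ and $s(e)\in V^c$ as above.
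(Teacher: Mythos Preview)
Your proof is correct and follows exactly the same approach as the paper: the paper's proof simply states that $(f_\E,g_\E)(w_V)=(0,u_V)$ ``by definition of $w_V$ and $u_V$'' and concludes. Your version just spells out this generator-by-generator computation in detail, verifying the cancellation in $f_\E(w_V)$ and the sign matching in $g_\E(w_V)$ that the paper leaves implicit.
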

\begin{proof}
For every subset $V\subset V(\Gamma)$ such that $E(V,V^c)\subset \E$, we have $(f_\E,g_\E)(w_V)=(0,u_V)$, by definition of $w_V$ and $u_V$. The result follows.
\end{proof}

For a graph $\Gamma$ and a pseudo-divisor $(\E,D)$ on $\Gamma$, we define the cone:
\[
 \sigma_{(\Gamma,\E,D)}=\T_\E(\R^{E(\Gamma^\E)}_{\geq0})\subset \R^{E(\Gamma^\E)}/\EL_\E.
\]
By Lemma \ref{lem:VEker}, we have a natural (projection) map $\pi_\E\col \sigma_{(\Gamma,\E,D)}\to \R_{\geq0}^{E(\Gamma)}$.
 The following diagram is commutative:
\begin{equation}
\label{eq:tausigma}
\begin{tikzcd}
\R^{E(\Gamma^\E)}_{\geq0} \ar[r,"\T_\E"] \ar[dd,"{(f_\E,g_\E)}"] \ar[rrrdd,bend left=40, "f_\E"]&     \sigma_{(\Gamma,\E,D)} \ar[dd,"\ol{(f_\E,g_\E)}"] \ar[rddr, "\pi_\E"]\\
\\
\tau_{(\Gamma,\E,D)} \ar[r,"{(\Id,T_\E)}"] \ar[rrr, bend right, "\pr"] &  (\Id,T_\E)(\tau_{\Gamma,\E,D}) \ar[rr,"\ol{\pr}"] & &  \R^{E(\Gamma)}_{\geq0}\\
\end{tikzcd}
\end{equation}
where $\ol{\pr}\col \R^{E(\Gamma)}\times \R^\E/L_\E\ra \R^{E(\Gamma)}$ is the projection on the second factor.
Since $\R^{E(\Gamma^\E)}_{\geq0}$ is isomorphic to $\tau_{(\Gamma,\E,D)}$ (via $(f_\E,g_\E)$), it follows that $\sigma_{(\Gamma,\E,D)}$ is isomorphic to $(\Id,T_\E)(\tau_{\Gamma,\E,D})$.

\begin{Prop}
\label{prop:parameter1}
For every tropical curve $X\in \R^{E(\Gamma)}_{>0}$, there is an isomorphism of polytopes $\pi_\E^{-1}(X)\cong P_{\E,D}(X)$. In particular, 
the open cone $\sigma_{(\Gamma,\E,D)}^\circ$ parametrizes equivalence classes of pairs $(X,\D)$, where $X$ is a tropical curve with model $\Gamma$ and $\D$ is a unitary divisor with combinatorial type $(\E,D)$, and two pairs $(X_1,D_1)$ and $(X_2,D_2)$ are equivalent if $X_1=X_2$ and $D_1$ and $D_2$ are linearly equivalent.
\end{Prop}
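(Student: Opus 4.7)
The plan is to read off the isomorphism directly from the commutative diagram \eqref{eq:tausigma} together with Lemma~\ref{lem:VEker}, and then to combine this with Proposition~\ref{prop:parameter} for the moduli-theoretic interpretation.

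For the first assertion, I would proceed as follows. By Lemma~\ref{lem:VEker}, the vertical map $\ol{(f_\E,g_\E)}$ in diagram~\eqref{eq:tausigma} is an isomorphism of vector spaces carrying the cone $\sigma_{(\Gamma,\E,D)}$ onto $(\Id,T_\E)(\tau_{(\Gamma,\E,D)})$. Under this identification the projection $\pi_\E$ becomes $\ol{\pr}$, so
\[
\pi_\E^{-1}(X)\;\cong\;\ol{\pr}^{-1}(X)\cap (\Id,T_\E)(\tau_{(\Gamma,\E,D)}).
\]
Since $(\Id,T_\E)$ is the identity on the first factor, this intersection equals $(\Id,T_\E)(\pr^{-1}(X)\cap \tau_{(\Gamma,\E,D)})$. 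Now Equation~\eqref{eq:prK} from Remark~\ref{rem:fEgE} identifies $\pr^{-1}(X)\cap \tau_{(\Gamma,\E,D)}$ with $\{X\}\times K_{\E,D}(X)$, and applying $(\Id,T_\E)$ yields $\{X\}\times T_\E(K_{\E,D}(X))=\{X\}\times P_{\E,D}(X)$. Projecting to the second factor gives the desired isomorphism of polytopes $\pi_\E^{-1}(X)\cong P_{\E,D}(X)$.

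For the moduli interpretation, a point of $\sigma_{(\Gamma,\E,D)}^\circ$ lies in the fiber $\pi_\E^{-1}(X)$ for a unique $X$ in the open cone $\R^{E(\Gamma)}_{>0}$ (which parametrizes tropical curves with model $\Gamma$), and its image in $\pi_\E^{-1}(X)$ must land in the interior $P^\circ_{\E,D}(X)$. By Proposition~\ref{prop:parameter}, $P^\circ_{\E,D}(X)$ parametrizes equivalence classes of unitary divisors on $X$ of combinatorial type $(\E,D)$. Assembling these two bijections, a point of $\sigma_{(\Gamma,\E,D)}^\circ$ corresponds precisely to a pair $(X,\D)$ modulo the stated equivalence.

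I expect no genuine obstacle here: the content is really the compatibility of all the maps in diagram~\eqref{eq:tausigma}, which is exactly what Lemma~\ref{lem:VEker} and Remark~\ref{rem:fEgE} are set up to provide. The only mildly delicate point is keeping track of which cones go under which map to ensure that the restriction of $\ol{(f_\E,g_\E)}$ to $\sigma_{(\Gamma,\E,D)}$ lands surjectively on $(\Id,T_\E)(\tau_{(\Gamma,\E,D)})$; this follows because $(f_\E,g_\E)$ already carries the cone $\R^{E(\Gamma^\E)}_{\ge 0}$ onto $\tau_{(\Gamma,\E,D)}$ by definition, and quotienting by $\EL_\E$ on the source corresponds to quotienting by $L_\E$ on the target thanks to Lemma~\ref{lem:VEker}.
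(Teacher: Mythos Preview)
Your proposal is correct and follows essentially the same route as the paper: both arguments use the isomorphism $\ol{(f_\E,g_\E)}$ from Lemma~\ref{lem:VEker} and diagram~\eqref{eq:tausigma} to identify $\pi_\E^{-1}(X)$ with $\ol{\pr}^{-1}(X)$, then apply Equation~\eqref{eq:prK} and the definition of $P_{\E,D}(X)$, and finally invoke Proposition~\ref{prop:parameter} for the moduli interpretation. Your version is slightly more explicit in tracking the intersection with the cone, but the content is the same.
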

 \begin{proof}
 Consider Diagram \eqref{eq:tausigma}. 
Since the map $\overline{(f_\E,g_E)}$ is an isomorphism, we have that $\pi_\E^{-1}(X)$ and $\ol{\pr}^{-1}(X)$ are isomorphic as polytopes. By Equation \eqref{eq:prK} we have $\pr^{-1}(X)=\{X\}\times K_{\E,D}(X)$, and therefore:
\[
\ol{\pr}^{-1}(X)= (\Id,T_\E)(\{X\}\times K_{\E,D}(X))=\{X\}\times P_{\E,D}(X).
\]
Hence, $\pi_\E^{-1}(X)$ is isomorphic to $P_{\E,D}(X)$, as required.\par

The last statement   follows from the first statement and from  Proposition \ref{prop:parameter}.
 \end{proof}

Note that given a specialization of triples $(\Gamma_1,\E_1,D_1)\geq (\Gamma_2,\E_2,D_2)$, then $\EL_{\E_2}=\EL_{\E_1}\cap \R^{E(\Gamma_2^{\E_2})}$, and hence we have a natural inclusion
\begin{equation}
\label{eq:sigmaED}
\sigma_{(\Gamma_2,\E_2,D_2)}\subset \sigma_{(\Gamma_1,\E_1,D_1)}.
\end{equation}

\begin{Prop}\label{prop:face1}
Let $\iota\col \Gamma_1\to \Gamma_2$ be a specialization of graphs. Let $(\E_1,D_1)$ be a pseudo-divisor on $\Gamma_1$, and set $(\E_2,D_2)=\iota_*(\E_1,D_1)$. Then $\sigma_{(\Gamma_2,\E_2,D_2)}$ is a face of $\sigma_{(\Gamma_1,\E_1,D_1)}$.
\end{Prop}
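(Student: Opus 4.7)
The plan is to exhibit $\sigma_{(\Gamma_2,\E_2,D_2)}$ as the intersection of $\sigma_{(\Gamma_1,\E_1,D_1)}$ with the zero locus of an explicit linear functional that is nonnegative on the bigger cone. I would first set $S:=E(\Gamma_1)\setminus E(\Gamma_2)$, the set of edges contracted by $\iota$, so that $\E_2=\E_1\setminus S$. At the level of subdivided graphs, $\iota_{\E_1}\col\Gamma_1^{\E_1}\to\Gamma_2^{\E_2}$ contracts the edges in $S\setminus\E_1$ together with both edges $e^s,e^t$ over each $e\in S\cap\E_1$, identifying $E(\Gamma_2^{\E_2})$ with a subset of $E(\Gamma_1^{\E_1})$.

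Next, I would transfer the problem across the isomorphism of Lemma~\ref{lem:VEker}: it suffices to prove that $(\Id,T_{\E_2})(\tau_{(\Gamma_2,\E_2,D_2)})$ is a face of $(\Id,T_{\E_1})(\tau_{(\Gamma_1,\E_1,D_1)})$ inside $\R^{E(\Gamma_1)}\times\R^{\E_1}/L_{\E_1}$, after embedding the ambient space for the index $2$ cone via the inclusions $\R^{E(\Gamma_2)}\subset\R^{E(\Gamma_1)}$ and $\R^{\E_2}\subset\R^{\E_1}$. For this embedding to descend cleanly to the quotient, one needs $L_{\E_2}=L_{\E_1}\cap\R^{\E_2}$; this follows from the cycle description in Equation~\eqref{eq:gammae}, since every cycle of $\Gamma_2$ lifts to a cycle of $\Gamma_1$ supported in $E(\Gamma_2)$, and any $L_{\E_1}$-relation supported on $\E_2$ decomposes into $L_{\E_2}$-relations.

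Third, using the explicit inequality description of Remark~\ref{rem:fEgE}, I would observe that
\[
\tau_{(\Gamma_2,\E_2,D_2)}=\tau_{(\Gamma_1,\E_1,D_1)}\cap\{x_e=0\text{ for all }e\in S\},
\]
because the constraint $x_e\ge z_e\ge 0$ for $e\in\E_1\cap S$ forces $z_e=0$ automatically once $x_e=0$. I then introduce the linear functional
\[
\varphi\col\R^{E(\Gamma_1)}\times\R^{\E_1}/L_{\E_1}\longrightarrow\R,\qquad \varphi\bigl((x_e)_e,\overline{(z_e)_e}\bigr):=\sum_{e\in S}x_e,
\]
which is well-defined since it does not involve the $z$-coordinates, and is nonnegative on $(\Id,T_{\E_1})(\tau_{(\Gamma_1,\E_1,D_1)})$. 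An element $(\Id,T_{\E_1})(x,z)$ lies in $\varphi^{-1}(0)$ exactly when some representative (hence every representative, since the $x$-coordinates are preserved by the quotient) has $x_e=0$ for $e\in S$, which by the previous step means $(x,z)\in\tau_{(\Gamma_2,\E_2,D_2)}$. This identifies the desired face with $(\Id,T_{\E_1})(\tau_{(\Gamma_2,\E_2,D_2)})=(\Id,T_{\E_2})(\tau_{(\Gamma_2,\E_2,D_2)})$.

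The main subtlety is the bookkeeping in the quotient: one must confirm that a $\varphi$-vanishing point on the image cone is lifted by a representative already lying in the sub-cone (rather than merely congruent to one modulo $L_{\E_1}$), and that the inclusion $\R^{\E_2}/L_{\E_2}\hookrightarrow\R^{\E_1}/L_{\E_1}$ is injective so that the face structure descends. Both points reduce to the identity $L_{\E_2}=L_{\E_1}\cap\R^{\E_2}$; once this is in hand, the rest is a direct unwinding of the cone descriptions.
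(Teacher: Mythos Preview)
Your proposal is correct and follows essentially the same route as the paper: both arguments exhibit the smaller cone as the zero locus of the linear functional summing the coordinates over the contracted edges, check this functional descends to the quotient, and verify its vanishing locus on the big cone is exactly the sub-cone. The only cosmetic difference is that the paper works directly with $\sigma_{(\Gamma,\E,D)}=\T_\E(\R^{E(\Gamma^\E)}_{\geq0})$ and the subspace $\EL_\E$, whereas you transport everything across the isomorphism $\ol{(f_\E,g_\E)}$ of Lemma~\ref{lem:VEker} to the $\tau$-side; under that isomorphism your functional $\varphi=\sum_{e\in S}x_e$ corresponds to the paper's $\sum_{e'}x_{e'}$ over the subdivided edges above $S$, and your observation that $\varphi$ is independent of the $z$-coordinates is exactly the paper's check that $\EL_{\E_1}\subset H$.
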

\begin{proof}
We know that  $\R^{E(\Gamma_2^{\E_2})}_{\geq0}$ is naturally a face of $\R^{E(\Gamma_1^{\E_1})}_{\geq0}$ and there is an inclusion $\sigma_{(\Gamma_2,\E_2,D_2)}\subset \sigma_{(\Gamma_1,\E_1,D_1)}$ by Equation \eqref{eq:sigmaED}. 
Let $\E\subset E(\Gamma_1)$ be the set of edges contracted by $\iota$. Let $H$ be the hyperplane in $\R^{E(\Gamma_1^{\E_1})}$ defined as $\sum_{e} x_e=0$, where the sum runs through all edges in $E(\Gamma_1^{\E_1})$ that lie over an edge in $\E$. Note that $\EL_{\E_1}\subset H$ because, if $e_1$ and $e_2$ are the two edges lying over some edge $e'\in \E\cap \E_1$, then the vectors $u_{e_1}$ and $u_{e_2}$ appears with opposite signs in $w_V$, for every $V$ (see Equation \eqref{eq:omegaV}).
  Since $H\cap \R^{E(\Gamma_1^{\E_1})}_{\geq0}=\R^{E(\Gamma_2^{\E_2})}_{\geq0}$, and $\R^{E(\Gamma_1^{\E_1})}_{\geq0}$ is contained in a single half-space defined by $H$, it follows that 
  \[
  (H/\EL_{\E_1})\cap \sigma_{(\Gamma_1,\E_1,D_1)}=\sigma_{(\Gamma_2,\E_2,D_2)}
  \]
and $\sigma_{(\Gamma_1,\E_1,D_1)}$ is contained in a single semi-space defined by $H/\EL_{\E_1}$. Therefore, $\sigma_{(\Gamma_2,\E_2,D_2)}$ is a face of $\sigma_{(\Gamma_1,\E_1,D_1)}$, as required.
\end{proof}

\subsection{Universal tropical Jacobians: a first attempt}\label{sec:attempt}

 As mentioned in the introduction, the category of generalized cone complexes is the right category to construct tropical moduli space. 
We will construct two spaces over $M_g^\trop$ which come close to being universal tropical Jacobians. Throughout, we denote by $M_g^{\trop}$ the moduli space of tropical curves. For more details on $M_g^{\trop}$, we refer to  \cite[Section 2]{M},  \cite[Sections 2.1 and 3.2]{BMV}, \cite[Section 3]{Caporaso1}, \cite[Section 3]{Caporaso}, \cite[Section 4]{ACP}.\par 

First of all, we define the generalized cone complex 
\[
J^{\semi,\trop}_{\mu,g}=\underset{\longrightarrow}{\lim}\sigma_{(\Gamma,\E,D)}
\]
where $\Gamma$ is a stable weighted graph of genus $g$ and $(\E,D)$ is a simple $\mu$-semistable pseudo-divisor on $\Gamma$. Since $(\E,D)$ is simple, it follows that $\sigma_{(\Gamma,\E,D)}=\R^{E(\Gamma^\E)}_{\geq0}$ and each specialization of $(\Gamma,\E,D)$ is also simple. Therefore $J^{\semi,\trop}_{\mu,g}$ is indeed a generalized cone complex. The space $J^{\semi,\trop}_{\mu,g}$ parametrizes equivalence classes of pairs $(X,\D)$, where $X$ is a stable tropical curve of genus $g$ and $\D$ is a simple $\mu$-semistable divisor on $X$.
  The problem with $J^{\semi,\trop}_{\mu,g}$ is that it is possible to have two simple $\mu$-semistable divisors $\D_1$ and $\D_2$, with different combinatorial type, that are linearly equivalent. In a way, the cone complex $J^{\semi,\trop}_{\mu,g}$ has too many points. In Section \ref{sec:strat}, we will see that the algebro-geometric counterpart of $J^{\semi,\trop}_{\mu,g}$ is a stack over $\ol{\mathcal M}_g$ which is not separated.\par
  
  The second space we can construct is the topological space
  \[
P^{\allsemi,trop}_{\mu,g}=\underset{\longrightarrow}{\lim}\sigma_{(\Gamma,\E,D)}
  \]
where $\Gamma$ is a stable weighted graph of genus $g$ and $(\E,D)$ is a $\mu$-semistable pseudo-divisor on $\Gamma$. By Equation \eqref{eq:sigmaED} we have that $\sigma_{(\Gamma',\E',D')}\subset \sigma_{(\Gamma,\E,D)}$ when $(\Gamma,\E,D)\geq(\Gamma',\E',D')$, hence the limit above is well defined. However, as we will see in Proposition \ref{prop:face3}, the topological space $P^{\allsemi,\trop}_{\mu,g}$ is not a generalized cone complex, in the sense that some inclusions $\sigma_{(\Gamma',\E',D')}\subset \sigma_{(\Gamma,\E,D)}$ are not face morphisms. \par
   This motivates the search for a better-behaved limit, which is given by the notion of $\mu$-polystable divisors, that we will introduce in the next section. It is worth to observe that, as a topological spaces, the universal tropical Jacobian we will construct in Section \ref{sec:Jac} is homeomorphic to $P^{\allsemi,\trop}_{\mu,g}$.

  \section{Polystable divisors}

We now introduce the key definition of polystable divisor on graphs and tropical curves. We will prove that in the equivalence class of a divisor on a tropical curve there is a polystable representative. It turns out that two equivalent polystable divisors have the same combinatorial type and their difference is an $\E$-divisor. This property will be crucial to construct a universal tropical Jacobian over $M_g^{\trop}$.

\subsection{Polystability on graphs}
  
  We begin with the definition of polystability for a pseudo-divisor on a graph.
  
  \begin{Def}
  We say that a pseudo-divisor $(\E,D)$ on a graph $\Gamma$ is $\mu$-\emph{polystable} if 
  $\beta_{\E,D}(V)\geq0$ for every subset $V\subset V(\Gamma)$, with strict inequality if $E(V,V^c)\not\subset \E$ (that is, if $V$ is not the set of vertices of a union of connected components of $\Gamma_\E$).
  Equivalently,  $(\E,D)$ is $\mu$-\emph{polystable} if $D_\E$ is $\mu_\E$-stable on $\Gamma_\E$.
  \end{Def}

Note that every $\mu$-polystable pseudo-divisor is $\mu$-semistable. Moreover, every $\mu$-stable divisor is a $\mu$-polystable pseudo-divisor (with $\E=\emptyset$).
The following result tells us that  polystability is well behaved under contractions of graphs.

\begin{Lem}\label{lem:polyspec}
If $\iota\col \Gamma_1\to \Gamma_2$ is a specialization of graphs and $(\E_1,D_1)$ is a $\mu$-polystable  pseudo-divisor, then $\iota_*(\E_1,D_1)$ is $\iota_*(\mu)$-polystable.
\end{Lem}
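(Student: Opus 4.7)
The plan is to reduce polystability on $\Gamma_2$ to polystability on $\Gamma_1$ by applying Lemma \ref{lem:betaspec2} to pull back subsets and computing with their preimages.

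Write $(\E_2,D_2)=\iota_*(\E_1,D_1)$ with $\E_2=\E_1\cap E(\Gamma_2)$, where $E(\Gamma_2)$ is identified with a subset of $E(\Gamma_1)$ via $\iota^E$. Fix $V\subset V(\Gamma_2)$ and consider $\iota^{-1}(V)\subset V(\Gamma_1)$. By Lemma \ref{lem:betaspec2},
\[
\beta_{\E_2,D_2}(V)=\beta_{\E_1,D_1}(\iota^{-1}(V)).
\]
Since $(\E_1,D_1)$ is polystable, hence semistable, the right-hand side is $\geq 0$. This gives the inequality $\beta_{\E_2,D_2}(V)\geq 0$ for every $V\subset V(\Gamma_2)$.

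For the strict inequality, I would argue contrapositively: if the polystability inequality is not strict at $V$ in $\Gamma_2$, then it is not strict at $\iota^{-1}(V)$ in $\Gamma_1$, and hence $E(\iota^{-1}(V),\iota^{-1}(V)^c)\subset \E_1$, which I want to translate into $E(V,V^c)\subset \E_2$. The key observation is that the map $\iota^E$ restricts to a bijection
\[
E(\iota^{-1}(V),\iota^{-1}(V)^c)\longleftrightarrow E(V,V^c):
\]
any edge of $\Gamma_1$ joining $\iota^{-1}(V)$ and $\iota^{-1}(V^c)$ has endpoints in different blocks of the contraction, so it is not contracted and descends to an edge in $E(V,V^c)$; conversely, every edge in $E(V,V^c)\subset E(\Gamma_2)\subset E(\Gamma_1)$ joins $\iota^{-1}(V)$ to $\iota^{-1}(V^c)$. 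Under this identification, the subset $\E_1\cap E(\iota^{-1}(V),\iota^{-1}(V)^c)$ corresponds to $\E_1\cap E(\Gamma_2)\cap E(V,V^c)=\E_2\cap E(V,V^c)$. Therefore $E(\iota^{-1}(V),\iota^{-1}(V)^c)\subset \E_1$ if and only if $E(V,V^c)\subset \E_2$.

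Combining these facts, if $E(V,V^c)\not\subset \E_2$ then $E(\iota^{-1}(V),\iota^{-1}(V)^c)\not\subset \E_1$; the polystability of $(\E_1,D_1)$ gives $\beta_{\E_1,D_1}(\iota^{-1}(V))>0$, hence $\beta_{\E_2,D_2}(V)>0$. This establishes polystability of $(\E_2,D_2)$. The verification is essentially bookkeeping: the only subtlety is the bijection of edge sets between $\Gamma_1$ and $\Gamma_2$, which I expect to be routine but is the step that needs to be stated carefully, since it is what links the two notions of ``$E(V,V^c)\subset \E$'' across the specialization.
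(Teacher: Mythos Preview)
Your proof is correct and is exactly the argument the paper intends: the paper's proof reads in full ``The proof follows directly by the definition and Lemma \ref{lem:betaspec2},'' and you have simply unpacked this one-liner, with the identification $E(\iota^{-1}(V),\iota^{-1}(V)^c)=E(V,V^c)$ being the routine step implicit in ``by the definition.''
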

\begin{proof}
The proof follows directly by the definition and Lemma \ref{lem:betaspec2}.
\end{proof}

Given a graph $\Gamma$ and a degree-$d$ polarization $\mu$ on $\Gamma$, we define $\ps_\mu(\Gamma)$ as the subposets of $\pd(\Gamma)$ consisting  of $\mu$-polystable pseudo-divisors on $\Gamma$.  
We have natural inclusions ($v_0\in V(\Gamma)$ is any vertex of $\Gamma$):
  \[
  \qs_{v_0,\mu}(\Gamma)\subset \sst_\mu(\Gamma)
  \quad\text{ and } \quad
  \ps_\mu(\Gamma)\subset \sst_\mu(\Gamma).
  \]
   For a nondegenerate polarization $\mu$, all the above inclusions are equalities.

  If $\mu$ is a  genus-$g$ universal polarization, we also define $\PS_{\mu,g}$ as the category whose objects are triples $(\Gamma,\E,D)$ where $\Gamma$ is a genus-$g$ stable weighted graph and $(\E,D)$ is a $\mu$-polystable pseudo-divisor on $\Gamma$, and morphisms are given by specializations. As usual, we let $\ps_{\mu,g}$ be the poset $\PS_{\mu,g}/\sim$, where the relation $\sim$ is  isomorphism. 
  %We have a natural inclusion $\ps_{\mu,g}\subset \sst_{\mu,g}$. 

\begin{Prop}
\label{prop:semispec}
Let $\Gamma$ be a graph and $\mu$ a degree-$d$ polarization on $\Gamma$. Let $(\E,D)$ be a $\mu$-semistable pseudo-divisor on $\Gamma$. Assume that $V\subset V(\Gamma)$ is a subset with $\beta_{\E,D}(V)=0$ and $E(V,V^c)\not\subset \E$. Then, there exists a unique $\mu$-semistable pseudo-divisor $(\E\cup E(V,V^c),D_1)$ on $\Gamma$ such that $(\E\cup E(V,V^c),D_1)>(\E, D)$.
\end{Prop}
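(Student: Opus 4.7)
The plan is to construct $D_1$ explicitly and then check both $\mu$-semistability and uniqueness by combining the submodularity identity \eqref{eq:betasum} with the hypothesis $\beta_{\E,D}(V)=0$.

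For existence, I would set $F := E(V,V^c) \setminus \E$ (nonempty by hypothesis) and $\E' := \E \cup E(V,V^c)$, and define $D_1$ on $\Gamma^{\E'}$ by $D_1(v_e) = -1$ at all exceptional vertices, $D_1(v) = D(v) + \val_F(v)$ for $v \in V$, and $D_1(w) = D(w)$ for $w \in V^c$. The specialization $\Gamma^{\E'} \to \Gamma^\E$ contracting, for each $e \in F$, the half-edge between $v_e$ and its $V$-endpoint then sends $(\E',D_1)$ to $(\E,D)$, so $(\E',D_1) > (\E,D)$.

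To verify $\mu$-semistability, I would compute $\beta_{\E',D_1}(W) - \beta_{\E,D}(W)$ for an arbitrary $W \subset V(\Gamma)$ by tracking how the new exceptional vertices affect each of the three terms, and organize the result by partitioning $F$ according to which of $V,V^c$ and $W,W^c$ its endpoints lie in. The expected outcome is the identity
\[
\beta_{\E',D_1}(W) = \beta_{\E,D}(W) - c, \qquad \text{where } c := |F \cap E(V \setminus W, W \cap V^c)|.
\]
Since $E(V,W) \setminus \E$ coincides with this set, Equation \eqref{eq:betasum} applied to $V$ and $W$, together with $\beta_{\E,D}(V)=0$, rewrites the right-hand side as $\beta_{\E,D}(V \cup W) + \beta_{\E,D}(V \cap W)$, which is nonnegative by $\mu$-semistability of $(\E,D)$.

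For uniqueness, any pseudo-divisor $(\E',D_1')$ with $(\E',D_1') > (\E,D)$ corresponds, via the underlying specialization, to a choice $\phi\col F \to V(\Gamma)$ of endpoint of each $e \in F$ absorbing $v_e$; this yields a partition $F = F_V \sqcup F_{V^c}$ according to whether $\phi(e) \in V$ or $V^c$, and $D_1'(v) = D(v) + |\phi^{-1}(v)|$. A direct calculation, using $\delta_{\Gamma_{\E'},V} = 0$ (since $E(V,V^c) \subset \E'$), $\val_F(V) = |F|$, and $\beta_{\E,D}(V)=0$, gives $\beta_{\E',D_1'}(V) = |F_V| - |F| = -|F_{V^c}|$, so $\mu$-semistability forces $F_{V^c} = \emptyset$, pinning down $D_1' = D_1$.

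The main obstacle I expect is the bookkeeping needed to extract the clean identity $\beta_{\E',D_1}(W) = \beta_{\E,D}(W) - c$: one must carefully account for the way the new exceptional vertices change the polarization term $\mu_{\E'}(W)$ (via $\val_{\E'}$) and simultaneously cut edges from the boundary term $\delta_{\Gamma_{\E'},W}$. Once this identity is in hand, submodularity and $\beta_{\E,D}(V)=0$ immediately produce both the inequality $\beta_{\E',D_1}(W) \geq 0$ for existence and the sharp obstruction $\beta_{\E',D_1'}(V) = -|F_{V^c}|$ for uniqueness.
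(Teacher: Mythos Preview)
Your proposal is correct and follows essentially the same approach as the paper. The only organizational difference is that the paper first reduces to the case $\E=\emptyset$ (via Equation~\eqref{eq:betaee}) and then splits $W=(W\cap V)\sqcup(W\cap V^c)$ to compute each $\beta$-term separately, whereas you work with general $\E$ and reach the same identity $\beta_{\E',D_1}(W)=\beta_{\E,D}(V\cup W)+\beta_{\E,D}(V\cap W)$ by computing the difference $\beta_{\E',D_1}(W)-\beta_{\E,D}(W)=-c$ and invoking submodularity; the uniqueness arguments are likewise the same, both testing at $V$.
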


\begin{proof}
It is sufficient to consider the case where $\E=\emptyset$. Indeed if $\E\neq\emptyset$, we can just replace $\Gamma$, $\mu$ and $D$ with $\Gamma_\E$, $\mu_\E$ and $D_\E$ (see also Equation \eqref{eq:betaee}).\par 
From now on, assume that $\E=\emptyset$. Let $D_1$ be the divisor on $\Gamma^{E(V,V^c)}$ defined as 
\begin{equation}\label{eq:D1}
D_1(v)=\begin{cases}
D(v)+\text{val}_{E(V,V^c)}(v)&\text{ if $v \in V$,}\\
D(v)&\text{ if $v \notin V$,}\\
-1 &\text{ if $v$ is exceptional.}
\end{cases}
\end{equation}
It is clear that $(E(V,V^c),D_1)> (\emptyset, D)$. 

We now argue that $(E(V,V^c),D_1)$ is $\mu$-semistable. 
   Let $W$ be a subset of $V(\Gamma_{E(V,V^c)})=V(\Gamma)$. Define $W_1=W\cap V$ and $W_2=W\cap V^c$.
 Note that there are no edges in $\Gamma_{E(V,V^c)}$ connecting $W_1$ and $W_2$. Hence it follows from Equation \eqref{eq:betasum} that
 \begin{equation}\label{eq:somma}
 \beta_{E(V,V^c),D_1}(W)=\beta_{E(V.V^c),D_1}(W_1)+\beta_{E(V,V^c),D_1}(W_2).
 \end{equation}
 We have $\beta_{E(V,V^c),D_1}(W_1)= \beta_{D}(W_1)\geq0$ and
\begin{align*}
\beta_{E(V,V^c),D_1}(W_2)&=\beta_D(W_2)-|E(V,W_2)|\\
                     &=\beta_D(V\cup W_2)-\beta_D(V)\\
                     &=\beta_D(V\cup W_2)\geq 0.
\end{align*}
By Equation \eqref{eq:somma}, this readily implies that $(E(V,V^c),D_1)$ is $\mu$-semistable. 

 To prove uniqueness, just note that if $(E(V,V^c),D')$ is another pseudo-divisor with $(E(V,V^c),D')> (\emptyset, D)$, then $\deg(D'|_V)< \deg(D_1|_V)$, and hence  
  \[
  \beta_{E(V,V^c),D'}(V)<\beta_{E(V,V^c),D_1}(V)=0,
  \]
  which means that $(E(V,V^c),D')$ is not $\mu$-semistable.
\end{proof}

Let $\Gamma$ be a graph. 
By definition, if $(\E,D)$ is a $\mu$-semistable pseudo-divisor on $\Gamma$ such that $\beta_{\E,D}(V)>0$ for every subset $V\subset V(\Gamma)$ such that $E(V,V^c)\not\subset \E$, then $(\E,D)$ is $\mu$-polystable. Therefore, by Proposition \ref{prop:semispec}, starting from a $\mu$-semistable pseudo-divisor $(\E,D)$, one can construct a sequence of specializations
\begin{equation}
\label{eq:seqsemi}
(\E_k,D_k) > (\E_{k-1},D_{k-1}) >\ldots >  (\E_0,D_0)=(\E,D)
\end{equation}
of $\mu$-semistable divisors, where $(\E_k,D_k)$ is $\mu$-polystable and $(\E_j,D_j)$ is not $\mu$-polystable for $j<k$. In the next result, we show that the polystable divisor $(\E_k,D_k)$ is uniquely determined.

\begin{Prop}\label{prop:minimal}
Let $\Gamma$ be a graph and $\mu$ a degree-$d$ polarization on $\Gamma$. Let $(\E,D)$ be a $\mu$-semistable pseudo-divisor. Then there exists a unique minimal $\mu$-polystable pseudo-divisor $(\E',D')$ on $\Gamma$ such that $(\E',D')\geq  (\E,D)$.
\end{Prop}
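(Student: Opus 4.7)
The plan is to prove existence and uniqueness separately, with both parts leveraging Proposition \ref{prop:semispec}.

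\textbf{Existence.} Starting from $(\E_0, D_0) := (\E, D)$, I iterate Proposition \ref{prop:semispec}: if $(\E_i, D_i)$ is not $\mu$-polystable, there exists a subset $V_i \subset V(\Gamma)$ with $\beta_{\E_i, D_i}(V_i) = 0$ and $E(V_i, V_i^c) \not\subset \E_i$. Proposition \ref{prop:semispec} then provides a $\mu$-semistable pseudo-divisor $(\E_{i+1}, D_{i+1}) := (\E_i \cup E(V_i, V_i^c), D^{V_i}_i)$ with $(\E_{i+1}, D_{i+1}) > (\E_i, D_i)$. Because $\E_{i+1} \supsetneq \E_i$, this process strictly increases the size of $\E$ and must terminate in a $\mu$-polystable pseudo-divisor dominating $(\E, D)$.

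\textbf{Uniqueness.} I induct on $N := |E(\Gamma) \setminus \E|$. The base case $N = 0$ is trivial since $(\E, D)$ itself is the only candidate; more generally, if $(\E, D)$ is already $\mu$-polystable, uniqueness is immediate. Otherwise, choose any $V_1$ with $\beta_{\E, D}(V_1) = 0$ and $E(V_1, V_1^c) \not\subset \E$, and let $(\E_{V_1}, D_{V_1})$ be the unique semistable specialization provided by Proposition \ref{prop:semispec}. The heart of the argument is the following claim.

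\textbf{Key Claim.} Any $\mu$-polystable pseudo-divisor $(\E', D')$ with $(\E', D') \geq (\E, D)$ satisfies $(\E', D') \geq (\E_{V_1}, D_{V_1})$. First, Lemma \ref{lem:betaspec1} gives $\beta_{\E', D'}(V_1) \leq \beta_{\E, D}(V_1) = 0$, while semistability forces $\beta_{\E', D'}(V_1) \geq 0$; hence $\beta_{\E', D'}(V_1) = 0$, and polystability of $(\E', D')$ then demands $E(V_1, V_1^c) \subset \E'$, i.e.\ $\E_{V_1} \subset \E'$. By Lemma \ref{lem:middle} there is a unique pseudo-divisor $(\E_{V_1}, \widehat D)$ with $(\E', D') \geq (\E_{V_1}, \widehat D) \geq (\E, D)$, and the last sentence of that lemma ensures $(\E_{V_1}, \widehat D)$ is $\mu$-semistable. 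Since $(\E_{V_1}, \widehat D) > (\E, D)$, the uniqueness statement of Proposition \ref{prop:semispec} yields $\widehat D = D_{V_1}$, proving the claim.

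Given the claim, any minimal $\mu$-polystable pseudo-divisor dominating $(\E, D)$ is a minimal $\mu$-polystable pseudo-divisor dominating $(\E_{V_1}, D_{V_1})$. Since $|E(\Gamma) \setminus \E_{V_1}| < N$, the inductive hypothesis applied to $(\E_{V_1}, D_{V_1})$ delivers a unique such minimal polystable dominator, and uniqueness for $(\E, D)$ follows. The main obstacle is isolating the key claim in a way that bypasses any bookkeeping on the directions of the exceptional vertices $v_e$; the use of the uniqueness part of Proposition \ref{prop:semispec} at the end of the claim's proof is what makes this clean, so everything hinges on recognizing that $(\E_{V_1}, \widehat D)$ fits the hypotheses of that proposition.
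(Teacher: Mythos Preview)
Your proof is correct and follows essentially the same approach as the paper's: both establish existence by iterating Proposition~\ref{prop:semispec}, and both prove uniqueness via the key claim that any $\mu$-polystable dominator of $(\E,D)$ must also dominate the first step $(\E_{V_1},D_{V_1})$ of that iteration, using Lemma~\ref{lem:betaspec1}, Lemma~\ref{lem:middle}, and the uniqueness clause of Proposition~\ref{prop:semispec} in the same way. The only cosmetic difference is that you frame the final step as an explicit induction on $|E(\Gamma)\setminus\E|$, whereas the paper simply iterates the key claim along the sequence~\eqref{eq:seqsemi}.
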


\begin{proof}
If $(\E,D)$ is $\mu$-polystable, just take $(\E',D')=(\E,D)$. Otherwise, there is a subset $V\subset V(\Gamma)$ such that $\beta_{\E,D}(V)=0$ and $E(V,V^c)\not\subset\E$. Applying Proposition \ref{prop:semispec} to $(\E,D)$ and $V$ we  obtain a unique $\mu$-semistable pseudo-divisor $(\E \cup E(V,V^c),D_1)$ such that $(\E \cup E(V,V^c),D_1)>(\E,D)$.

   Assume that $(\ol \E, \ol D)$ is a $\mu$-polystable divisor such that $(\ol \E, \ol D)>(\E,D)$. Let us prove that $(\ol \E, \ol D)\geq(\E\cup E(V,V^c),D_1)$. First, we prove that $E(V,V^c)\subset \ol \E$. Indeed, by Lemma \ref{lem:betaspec1}, we have  $\beta_{\ol \E, \ol D}(V)\leq \beta_{\E,D}(V)=0$, hence $\beta_{\ol \E, \ol D}(V)=0$. Therefore, by the definition of polystability, we get $E(V,V^c)\subset \ol \E$.
  By Lemma \ref{lem:middle}, there is a unique $\mu$-semistable pseudo-divisor $(\E \cup E(V,V^c),\widetilde{D})$ such that $(\ol \E, \ol D)\geq(\E\cup E(V,V^c),\widetilde{D})>(\E,D)$. By Proposition \ref{prop:semispec}, we have that $\widetilde{D}=D_1$. This proves that $(\ol \E, \ol D)>(\E\cup E(V,V^c),D_1)$, as wanted. 
	
	Following the same argument, if 
\[
(\E_k,D_k) > (\E_{k-1},D_{k-1}) >\ldots > \ldots (\E_0,D_0)=(\E,D)
\]
is a sequence as in Equation \eqref{eq:seqsemi}, then $(\ol \E,\ol D)\geq(\E_k,D_k)$. This implies that $(\E_k,D_k)$ is a minimal $\mu$-polystable pseudo-divisor on $\Gamma$ such that $(\E_k,D_k)>(\E,D)$, and that it is the unique one satisfying these properties. 
%and that the construction in Equation \eqref{eq:seqsemi} always have the same final pseudo-divisor.
\end{proof}

    As a consequence of Proposition \ref{prop:minimal}, we obtain an order preserving map:
   \[
   \pol\col \sst_{\mu}(\Gamma)\to \ps_\mu(\Gamma)
   \]
   taking $(\E,D)$ to $\pol(\E,D):=(\E',D')$, where $(\E',D')$ is the minimal $\mu$-polystable pseudo-divisor such that $(\E',D')\ge(\E,D)$. Note that the map $\pol$ is a section of the natural inclusion map $\ps_\mu(\Gamma)\to \sst_\mu(\Gamma)$. Moreover, we have 
   \begin{equation}
  \label{eq:rkm}
    \rk(\pol(\E,D))\geq \rk(\E,D),
   \end{equation}
   because $\pol(\E,D)\geq (\E,D)$. 
   %We also have an induced functor $\pol\col \SST_{\mu,g}\to \PS_{\mu,g}$ that is a left inverse of the inclusion functor $\PS_{\mu,g}\to \SST_{\mu,g}$.\footnote{Tirar, se tirarmos as def. das categorias universais}
   
   %We write $\pol_{qs}\col \qs(\Gamma)\to \ps(\Gamma)$ for  the restriction of $m$ to the poset $\qs(\Gamma)$. \par
   
\begin{Lem}\label{lem:rkstrict}
 Let $(\E_1,D_1)$ and $(\E_2,D_2)$ be two $\mu$-polystable divisors on a graph $\Gamma$ such that $(\E_1,D_1)>(\E_2,D_2)$. Then $\rk(\E_1,D_1)>\rk(\E_2,D_2)$.
\end{Lem}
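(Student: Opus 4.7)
The plan is to reduce the claim to a statement about first Betti numbers, and then derive a contradiction from polystability of $(\E_2,D_2)$ via the sum formula in Equation \eqref{eq:betasum}. First, from $\rk(\E,D) = |\E| - b_0(\Gamma_\E) + b_0(\Gamma)$ and the Euler-characteristic formula one obtains $\rk(\E,D) = b_1(\Gamma) - b_1(\Gamma_\E)$, so the desired inequality becomes $b_1(\Gamma_{\E_1}) < b_1(\Gamma_{\E_2})$. The strict specialization $(\E_1,D_1)>(\E_2,D_2)$ forces $\E_2 \subsetneq \E_1$ (otherwise the induced specialization $\Gamma^{\E_1}\to\Gamma^{\E_2}$ is an isomorphism and $D_1=\iota_*D_1=D_2$), so setting $F := \E_1\setminus \E_2 \ne \emptyset$, the graph $\Gamma_{\E_1}$ is obtained from $\Gamma_{\E_2}$ by removing $F$, and $b_1$ drops strictly iff some $e\in F$ lies on a cycle of $\Gamma_{\E_2}$.

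Suppose for contradiction that every $e \in F$ is a bridge of $\Gamma_{\E_2}$. Fix one such $e$ and let $V \subset V(\Gamma)$ be the vertex set of one side of this bridge in $\Gamma_{\E_2}$ (pushing all other connected components of $\Gamma_{\E_2}$ into $V^c$); then $E(V,V^c)\setminus \E_2 = \{e\}$ and both $V$ and $V^c$ are non-empty proper subsets of $V(\Gamma)$. Applying Equation \eqref{eq:betasum} to $(\E_2,D_2)$ at the complementary sets $V$ and $V^c$, and using the direct computations $\beta_{\E_2,D_2}(V(\Gamma)) = 0 = \beta_{\E_2,D_2}(\emptyset)$ (the first from $\deg(D_{2,\E_2}) = d + |\E_2| = \mu_{\E_2}(V(\Gamma))$ and $\delta_{\Gamma_{\E_2},V(\Gamma)}=0$), we get
\[
\beta_{\E_2,D_2}(V) + \beta_{\E_2,D_2}(V^c) = |E(V,V^c)\setminus \E_2| = 1.
\]
However, since $e \in E(V,V^c)\setminus \E_2$, we have $E(V,V^c)\not\subset \E_2$, and by symmetry the same holds with $V$ and $V^c$ swapped. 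Polystability of $(\E_2,D_2)$ then forces both $\beta_{\E_2,D_2}(V)$ and $\beta_{\E_2,D_2}(V^c)$ to be strictly positive integers, so their sum is at least $2$, contradicting the identity above.

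Hence some $e \in F$ lies on a cycle of $\Gamma_{\E_2}$, giving $b_1(\Gamma_{\E_1}) < b_1(\Gamma_{\E_2})$ and therefore $\rk(\E_1,D_1) > \rk(\E_2,D_2)$. The main obstacle is identifying the correct subset $V$: once the bridge hypothesis pins down $|E(V,V^c)\setminus \E_2|=1$, the contradiction drops out of Equation \eqref{eq:betasum} and integrality of the $\beta$'s in essentially one line. It is worth remarking that the argument only invokes polystability of the smaller pseudo-divisor $(\E_2,D_2)$; polystability of $(\E_1,D_1)$ is, somewhat surprisingly, not used.
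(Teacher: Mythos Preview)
Your argument has a genuine gap at the integrality step. You assert that polystability of $(\E_2,D_2)$ forces $\beta_{\E_2,D_2}(V)$ and $\beta_{\E_2,D_2}(V^c)$ to be strictly positive \emph{integers}, but the polarization $\mu$ is real-valued, and the quantity
\[
\beta_{\E_2,D_2}(V)=\deg(D_2|_V)-\mu(V)-\tfrac{1}{2}\val_{\E_2}(V)+\tfrac{1}{2}\delta_{\Gamma_{\E_2},V}
\]
need not be an integer. For instance, take $\Gamma$ with two vertices $v_1,v_2$ joined by a single edge $e$, and $\mu(v_1)=0.3$, $\mu(v_2)=-0.3$. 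Then $(\E_2,D_2)=(\emptyset,0)$ is $\mu$-polystable, and for $V=\{v_1\}$ one finds $\beta_{\E_2,D_2}(V)=0.2$ and $\beta_{\E_2,D_2}(V^c)=0.8$: both strictly positive, summing to $1$, and no contradiction. Your closing remark that polystability of $(\E_1,D_1)$ is ``not used'' is exactly the symptom of the problem; without that hypothesis the inference fails.

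The argument can be repaired precisely by invoking $(\E_1,D_1)$. Since $E(V,V^c)\setminus\E_2=\{e\}$ and $e\in\E_1$, we have $E(V,V^c)\subset\E_1$; applying Equation~\eqref{eq:betasum} to $(\E_1,D_1)$ at $V$ and $V^c$ gives $\beta_{\E_1,D_1}(V)+\beta_{\E_1,D_1}(V^c)=0$, and semistability forces both to vanish. Now the computation in the proof of Lemma~\ref{lem:betaspec1} shows
\[
\beta_{\E_2,D_2}(V)-\beta_{\E_1,D_1}(V)=\deg(D_2|_V)-\deg(D_1|_V)+\val_{\E_1\setminus\E_2}(V)\in\mathbb{Z},
\]
so $\beta_{\E_2,D_2}(V)\in\mathbb{Z}$, and likewise for $V^c$. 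Then your contradiction goes through. With this patch, your route is genuinely different from the paper's: the paper reduces to $\E_2=\emptyset$, contracts to a tree, and finds a sink $v_0$ with $\beta_{D_2}(v_0)=0$, whereas you work directly with a bridge in $\Gamma_{\E_2}$ and the sum formula. Your approach is arguably cleaner once the integrality is justified, but it does require both polystability hypotheses, contrary to your final comment.
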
   
\begin{proof}
By Equation \eqref{eq:betaee}, we can assume that $\E_2=\emptyset$, and in this case $D_2$ is stable. Also, it is sufficient to prove the statement for a connected component of $\Gamma$.\par
  Assume that $\rk(\E_1,D_1)=\rk(\emptyset, D_2)=0$. Consider the specialization 
  \[
  \iota\col \Gamma\to\Gamma'=\Gamma/(E(\Gamma)\setminus \E_1).
  \]
  Since $\rk(\E_1,D_1)=0$, the graph $\Gamma'$ is a tree.  By Lemma \ref{lem:polyspec}, the  pseudo-divisors $\iota_*(\E_1,D_1)$ and $(\iota_*(\emptyset, D_2)$ are polystable. So we can assume that $\Gamma=\Gamma'$ is a tree and $\E=E(\Gamma)$.\par
  The specialization $(\E_1,D_1)\to (\emptyset, D_2)$ is induced by a specialization $\iota\col \Gamma^{\E_1}\to \Gamma$. We define an orientation on $\Gamma$ such that $\iota(v_e)=t(e)$ for every $e\in E(\Gamma)$ (recall that $v_e$ is the exceptional vertex of $\Gamma^{\E_1}$ contained in $e$). Since $\Gamma$ is a tree, this orientation is acyclic, so it has a sink $v_0$. We get that $D_2(v_0)=D_1(v_0)-\val_{E(\Gamma)}(v_0)$, then
  \begin{align*}
  \beta_{D_2}(v_0)&=D_2(v_0)-\mu(v_0)+\frac{\val_{E(\Gamma)}(v_0)}{2}\\
  &=D_1(v_0)-\mu(v_0)-\frac{\val_{E(\Gamma)}(v_0)}{2}\\
  &=D_1(v_0)-\mu_{E(\Gamma)}(v_0)\\
  &=\beta_{\E_1,D_1}(v_0)=0,
  \end{align*}  
  which is a contradiction.
\end{proof}
   
 \begin{Prop}
 \label{prop:polyquasi}
 Let $\Gamma$ be a graph, $\mu$ polarization on $\Gamma$ and $v_0$ a vertex of $\Gamma$. If $(\E,D)$ is a $\mu$-polystable pseudo-divisor, then there is a $(v_0,\mu)$-quasistable pseudo-divisor $(\E',D')$ such that $\pol(\E',D')=(\E,D)$ and $\rk(\E',D')=\rk(\E,D)$.
  \end{Prop}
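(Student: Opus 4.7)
The plan is to construct $(\E',D')$ explicitly via a spanning-tree construction on the auxiliary graph obtained by contracting the components of $\Gamma_\E$, then verify the three required properties (rank equality, $(v_0,\mu)$-quasistability, and $\pol(\E',D')=(\E,D)$). Assume $\Gamma$ is connected, let $C_0,\ldots,C_{k-1}$ be the connected components of $\Gamma_\E$ with $v_0\in C_0$, and form the auxiliary graph $\wh\Gamma$ whose vertices are the $C_i$'s and whose edges are $\E$ (each $e\in\E$ joins the components containing its endpoints). Choose a spanning tree $T\subset\E$ of $\wh\Gamma$ and orient its edges away from the root $C_0$. Set $\E':=\E\setminus T$, and define $D'$ on $\Gamma^{\E'}$ by
\[
D'(v):=D(v)-|\{e\in T:t(e)=v\}|\ \text{for}\ v\in V(\Gamma),\qquad D'(v_e):=-1\ \text{for}\ e\in \E'.
\]
Equivalently, for each $e\in T$ the exceptional $-1$ of $v_e$ is absorbed into the endpoint $t(e)$ farther from the root. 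Since $T$ spans $\wh\Gamma$, the graph $\Gamma_{\E'}$ is connected, so
\[
\rk(\E',D')=|\E'|-1+1=|\E|-(k-1)=\rk(\E,D).
\]

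For quasistability the key is the identity
\[
\beta_{\E',D'}(V)-\beta_{\E,D}(V)=|\{e\in T:s(e)\in V,\ t(e)\notin V\}|,
\]
valid for every $V\subseteq V(\Gamma)$. It is obtained by comparing the three terms defining $\beta_{\E',D'}(V)$ with their $(\E,D)$-counterparts, using $\mu_{\E}(V)-\mu_{\E'}(V)=\val_T(V)/2$, $\delta_{\Gamma_{\E'},V}-\delta_{\Gamma_\E,V}=|E(V,V^c)\cap T|$, and $\deg(D|_V)-\deg(D'|_V)=|\{e\in T:t(e)\in V\}|$, and then tallying contributions by the four possible positions of an edge $e\in T$ relative to $V$; three of the four cases contribute zero and the fourth contributes $+1$.

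The quasistability of $(\E',D')$ now follows by case analysis on $V\subsetneq V(\Gamma)$. If $V$ is not a union of components of $\Gamma_\E$, then $E(V,V^c)\not\subset\E$, and polystability of $(\E,D)$ gives $\beta_{\E,D}(V)>0$, whence $\beta_{\E',D'}(V)>0$. If $V$ is a proper union of components of $\Gamma_\E$ containing $v_0$ (so $C_0\subset V$), then $V^c$ contains some $C_j$, and the unique oriented $T$-path from $C_0$ to $C_j$ must contain an edge $e\in T$ with $s(e)\in V$ and $t(e)\notin V$; thus $\beta_{\E',D'}(V)>0$. The remaining case $v_0\notin V$ only requires the non-negative bound, which is automatic. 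These are precisely the conditions defining $(v_0,\mu)$-quasistability.

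For the polystabilization identity, $(\E,D)\geq(\E',D')$ together with the minimality characterization in Proposition \ref{prop:minimal} yields $\pol(\E',D')\leq(\E,D)$, while $\pol(\E',D')\geq(\E',D')$ forces $\rk(\pol(\E',D'))\geq\rk(\E',D')=\rk(\E,D)$. Since Lemma \ref{lem:rkstrict} asserts that a strict specialization of polystable pseudo-divisors strictly drops the rank, these two facts force $\pol(\E',D')=(\E,D)$. The only delicate step is the bookkeeping identity above, but the rooted-tree orientation is chosen precisely so that the $T$-contributions collapse into the non-negative count of edges leaving $V$, whose vanishing is controlled by whether $v_0\in V$.
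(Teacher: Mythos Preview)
Your proof is correct and follows essentially the same strategy as the paper: both choose a spanning tree $T$ of the contracted graph $\Gamma/(E(\Gamma)\setminus\E)$, orient it away from the component containing $v_0$, and absorb each exceptional $-1$ on $T$ into the target endpoint. The paper first reduces (via Equation~\eqref{eq:betaee}) to the case $\E=T$, so that $\E'=\emptyset$, whereas you carry $\E'=\E\setminus T$ throughout; your bookkeeping identity $\beta_{\E',D'}(V)-\beta_{\E,D}(V)=|\{e\in T:s(e)\in V,\ t(e)\notin V\}|$ is a clean packaging of the paper's computation of $\beta_{D'}(V)$ in terms of the sets $\E_w$ and $\E_V$, and the concluding appeal to Proposition~\ref{prop:minimal} and Lemma~\ref{lem:rkstrict} is exactly what the paper uses.
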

 \begin{proof}
  Consider the specialization:
  \[
  \iota\col \Gamma\to \Gamma'=\Gamma/(E(\Gamma)\setminus\E).
  \]
     Let $T\subset \E$ be a spanning tree of $\Gamma'$. We know, by Equation \eqref{eq:betaee}, that $(\E,D)$ is $\mu$-polystable if and only if $(T,D_{\E\setminus T})$ is a $\mu_{\E\setminus T}$-polystable pseudo-divisor on $\Gamma_{\E\setminus T}$.
   Hence, we can assume that $\E=T$. In this case, $\Gamma'$ is a tree. We will view $\iota(v_0)$ as the root of $\Gamma'$. This gives rise to an orientation $s,t\col \E\to V(\Gamma)$ (pointing away from the root). Moreover, $\rk(\E,D)=0$.\par 
  
  We define the pseudo-divisor $(\emptyset,D')$ on $\Gamma$ as
  \[
  D'(v)=\begin{cases}
   D(v)-1&\text{ if $v=t(e)$ for some $e\in\E$ }\\
   D(v) & \text{ otherwise}.
  \end{cases}
  \]
 
  We have that $\rk(\emptyset, D')=0=\rk(\E,D)$ and $(\E,D)\geq (\emptyset,D')$. By Lemma \ref{lem:rkstrict}, all that is left is to prove is that $D'$ is $(v_0,\mu)$-quasistable. Since $D'$ is the specialization of a $\mu$-polystable pseudo-divisor, it is $\mu$-semistable. Hence, there remains to prove that $\beta_{D'}(V)>0$ for every proper subset $V\subsetneqq V(\Gamma)$ such that $v_0\in V$.
  
    Let $V\subset V(\Gamma)$. For each vertex $w\in V(\Gamma')$, let $V_w\subset V$ be the subset of vertices $v\in V$ such that $\iota(v)=w$. Moreover, let $w_0:=\iota(v_0)$ and, for each $w\in V(\Gamma')$ with $w\ne w_0$, let $e_w$ be the unique edge in $\E$ such that $\iota(t(e))=w$. By Equation \eqref{eq:betasumD}:
\[
\beta_{D'}(V)=\sum_{w\in V(\Gamma')} \beta_{D'}(V_w)-|\E_V |,
\]
 where $\E_V$ is the set:
 \[
 \E_V=\{e\in \E; e \in E(V_{\iota(s(e))}, V_{\iota(t(e))}\}.
  \]
  However,
 \[
 \beta_{D'}(V_w)=\beta_{\E,D}(V_w)+|\E_w|,
 \]
 where $\E_w$ is the set
 \[
 \E_w=\{e \in \E; \iota(s(e))=w\text{ and } e\in E(V_w,V_w^c)\}.
 \]
  Note that the $\E_w$ are pairwise disjoint. Then
 \[
 \beta_{D'}(V)=\sum_{w\in V(\Gamma')} \beta_{\E,D}(V_w) -|\E_V| 
 +\left|\coprod_{w\in V(\Gamma')}\E_w\right|.
 \]
 On the other hand, we have $\E_V\subset \coprod\E_w$. 
 
 Assume now that $\beta_{D'}(V)=0$, for some $V\subset V(\Gamma)$.  Then $\beta_{\E,D}(V_w)=0$, for every $w\in V(\Gamma')$, and $\E_V=\coprod \E_w$. In this case, since $(\E,D)$ is $\mu$-polystable, we get that either $V_w=\iota^{-1}(w)$ or $V_w=\emptyset$. Let 
 \[
 W:=\{w\in V(\Gamma');V_w=\iota^{-1}(w)\}.
 \]
 Note that $w_0\in W$, because $v_0\in V$ and hence $v_0\in V_{w_0}\neq\emptyset$. 
 We claim that $W$ is equal to $V(\Gamma')$. Indeed, if this is not the case, then there is an edge $e\in \E$, such that $\iota(s(e))\in W$ and $\iota(t(e))\notin W$. In other words, $V_{\iota(s(e))}=\iota^{-1}(\iota(s(e)))$ and $V_{\iota(t(e))}=\emptyset$. This means that $e\in \E_{\iota(s(e))}\setminus \E_V$, hence $\E_V\neq \coprod \E_{w}$, a contradiction. 
 Then $W$ is equal to $V(\Gamma')$, and hence $V$ is equal to $V(\Gamma)$, and we are done.
 \end{proof}

   \begin{Exa}\label{exa:polyposet}
   Consider the graph $\Gamma$ in Figure \ref{fig:unitarydiv}. Let $\mu$ be the polarization on $\Gamma$ of degree $-1$ given by $\mu(v)=-1/2$ for every $v\in V(\Gamma)$. Figure \ref{fig:polyposet}  illustrates the poset $\ps_\mu(\Gamma)$ of $\mu$-polystable pseudo-divisors of degree $-1$ on $\Gamma$. 
If $v_0$ is one of the two vertices of $\Gamma$, then the intersection $\ps_\mu(\Gamma)\cap \qs_{v_0,\mu}(\Gamma)$ consists of the 3 pseudo-divisors in Figure \ref{fig:polyposet} of type $(\E,D)$, with $\E\ne E(\Gamma)$. The intersection $\qs_{v_0,\mu}(\Gamma)\setminus\ps_\mu(\Gamma)$ consists of the pseudo-divisors $(\E',D')$, with $|\E'|\in\{0,1,2\}$ and $D'(v_0)=1$. For any one of them, $\pol(\E',D')$ is the polystable pseudo-divisor $(E(\Gamma),\ol D)$ on the left in Figure \ref{fig:polyposet}. Note that $\rk(\E',D')=\rk(E(\Gamma),\ol D)$ if $|\E'|=2$.
\begin{figure}[hb]
\begin{tikzpicture}[scale=2.25]
\begin{scope}[shift={(0,0)}]
\draw (0,0) to [out=30, in=150] (1,0);
\draw (0,0) to (1,0);
\draw (0,0) to [out=-30, in=-150] (1,0);
\draw[fill] (0,0) circle [radius=0.02];
\draw[fill] (1,0) circle [radius=0.02];
\draw[fill] (0.5,0.144) circle [radius=0.02];
\draw[fill] (0.5,0) circle [radius=0.02];
\draw[fill] (0.5,-0.15) circle [radius=0.02];
\node at (-0.07,0) {1};
\node at (1.05,0) {1};
\node at (0.55,0.23) {-1};
\node at (0.55,0.06) {-1};
\node at (0.55,-0.22) {-1};
\end{scope}
\draw[->] (1.2,0) to (1.8, 0);
\draw[->] (3.3,0.1) to (3.8, 0.4);
\draw[->] (3.3,-0.1) to (3.8, -0.4);
\begin{scope}[shift={(0,0)}]
\draw (0+2,0) to [out=30, in=150] (1+2,0);
\draw (0+2,0) to (1+2,0);
\draw (0+2,0) to [out=-30, in=-150] (1+2,0);
\draw[fill] (0+2,0) circle [radius=0.02];
\draw[fill] (1+2,0) circle [radius=0.02];
%\draw[fill] (0.5+2,0.144) circle [radius=0.02];
%\draw[fill] (0.5+2,0) circle [radius=0.02];
\draw[fill] (0.5+2,-0.15) circle [radius=0.02];
\node at (-0.07+2,0) {0};
\node at (1.05+2,0) {0};
%\node at (0.55+2,0.23) {-1};
%\node at (0.55+2,0.06) {-1};
\node at (0.55+2,-0.22) {-1};
\end{scope}
\begin{scope}[shift={(0,0)}]
\draw (0+2+2,0+0.5) to [out=30, in=150] (1+2+2,0+0.5);
\draw (0+2+2,0+0.5) to (1+2+2,0+0.5);
\draw (0+2+2,0+0.5) to [out=-30, in=-150] (1+2+2,0+0.5);
\draw[fill] (0+2+2,0+0.5) circle [radius=0.02];
\draw[fill] (1+2+2,0+0.5) circle [radius=0.02];
%\draw[fill] (0.5+2+2,0.144) circle [radius=0.02];
%\draw[fill] (0.5+2+2,0) circle [radius=0.02];
%\draw[fill] (0.5+2+2,-0.15) circle [radius=0.02];
\node at (-0.07+2+2,0+0.5) {0};
\node at (1.05+2+2,0+0.5) {\;\;\;-1};
%\node at (0.55+2+2,0.23) {-1};
%\node at (0.55+2+2,0.06) {-1};
%\node at (0.55+2+2,-0.22) {-1};
\end{scope}
\begin{scope}[shift={(0,0)}]
\draw (0+2+2,0-0.5) to [out=30, in=150] (1+2+2,0-0.5);
\draw (0+2+2,0-0.5) to (1+2+2,0-0.5);
\draw (0+2+2,0-0.5) to [out=-30, in=-150] (1+2+2,0-0.5);
\draw[fill] (0+2+2,0-0.5) circle [radius=0.02];
\draw[fill] (1+2+2,0-0.5) circle [radius=0.02];
%\draw[fill] (0.5+2+2,0.144) circle [radius=0.02];
%\draw[fill] (0.5+2+2,0) circle [radius=0.02];
%\draw[fill] (0.5+2+2,-0.15) circle [radius=0.02];
\node at (-0.07+2+2,0-0.5) {-1};
\node at (1.05+2+2,0-0.5) {0};
%\node at (0.55+2+2,0.23) {-1};
%\node at (0.55+2+2,0.06) {-1};
%\node at (0.55+2+2,-0.22) {-1};
\end{scope}
\end{tikzpicture}
\caption{The poset of polystable pseudo-divisors.}
\label{fig:polyposet}
\end{figure}
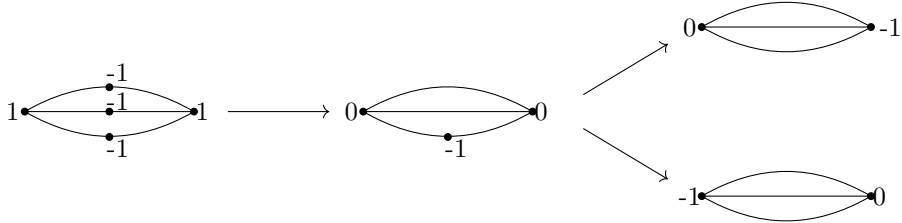
   \end{Exa}

\subsection{Polystability on tropical curves}

We consider polystability for divisors on tropical curves.  
Let $X$ be a tropical curve,  $\mu$ a polarization on $X$ and $\Gamma$ a $\mu$-model of $X$. 

\begin{Def}
A divisor $\D$ on $X$ is $\mu$-\emph{polystable} if there is a $\mu$-polystable pseudo-divisor $(\E,D)$ on $\Gamma$ such that $\D\in K^\circ_{\E,D}(X)$.
\end{Def}

%Given a divisor $\D$ on $X$, we say that $\D$ is \emph{$\mu$-polystable} if its combinatorial type $(\E,D)$ is $\mu$-polystable.

%\begin{Def}   We say that a divisor $D$ of degree $d$ on $X$  is \emph{$\mu$-polystable} if 
%\begin{itemize}\label{def:polydef}
%    \item[(1)] 
%    $\Gamma^D$ is the $\E_D$-subdivision of $\Gamma$;
%    \item[(2)] $D(p)=-1$ for every $p\in V(\Gamma^D)\setminus V(G)$;
%    \item[(3)] $D_{\E_D}$ is $\mu_{\E_D}$-stable on $X_{\E_D}$.
%\end{itemize}
%\end{Def}

% Note that the third condition imply that $\mu_{\E_D}$ is a polarization of degree $d+|\E_D|$ (i.e, of degree $\deg D_{\E_D}$) on $X_{\E_D}$.
 The goal of this section is to prove the following theorem (see also \cite[Proposition 4.4]{CPS} for the same statement).
 
\begin{Thm}
\label{thm:poly}
Let $X$ be a tropical curve and $\mu$ a degree-$d$ polarization on $X$. The following properties hold.
\begin{enumerate}
    \item Every degree-$d$ divisor on $X$ is equivalent to a $\mu$-polystable divisor.
    \item Two equivalent $\mu$-polystable divisors have the same combinatorial type.
\end{enumerate}
\end{Thm}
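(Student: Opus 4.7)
My plan is to prove (1) by lifting the graph-theoretic refinement of Proposition \ref{prop:semispec} to a tropical chip-firing move on $X$, and to prove (2) by reducing to the uniqueness of the $(v_0,\mu)$-quasistable representative in each equivalence class combined with Proposition \ref{prop:polyquasi}.

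For (1), I would first invoke the tropical analogue of Caporaso's theorem (see \cite[Theorem 5.10]{AP1}): every degree-$d$ divisor on $X$ is equivalent to a unitary $(v_0,\mu)$-quasistable, hence $\mu$-semistable, divisor. So I may assume $\D$ is a unitary $\mu$-semistable divisor of combinatorial type $(\E,D)$. If $(\E,D)$ is $\mu$-polystable we are done; otherwise Proposition \ref{prop:semispec} provides $V\subset V(\Gamma)$ with $\beta_{\E,D}(V)=0$ and $E(V,V^c)\not\subset\E$, together with a unique refinement $(\E\cup E(V,V^c),D_1)$ that remains $\mu$-semistable. To realize this refinement on $X$, pick $\epsilon>0$ smaller than the length of every relevant edge and let $f\col X\to\R$ be the piecewise linear rational function that is constant equal to $\epsilon$ on the ``$V$-side'' of $X$ (namely on $X_V$ together with the $V$-halves $[v,v_e]$ of edges $e\in\E$ incident to $V$), is constant equal to $0$ on the complementary ``$V^c$-side'', and interpolates with slope $-1$ over a length-$\epsilon$ segment starting at each vertex or exceptional vertex on the boundary between these two sides. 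A computation of incoming slopes shows that $\ord_v(f)=\val_{E(V,V^c)\setminus\E}(v)$ for $v\in V$, that a new $-1$-point is created at distance $\epsilon$ from each $V$-endpoint on every edge in $E(V,V^c)\setminus\E$, and that the $-1$-points of $\D$ already supported on edges in $\E$ are at worst slightly displaced within the interior of the same edge. Hence $\D+\Div(f)$ is a unitary $\mu$-semistable divisor of combinatorial type $(\E\cup E(V,V^c),D_1)$ equivalent to $\D$. Since $|\E|$ strictly increases at each step and is bounded by $|E(\Gamma)|$, after finitely many iterations we arrive at a $\mu$-polystable representative.

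For (2), let $\D_1,\D_2$ be equivalent $\mu$-polystable divisors with combinatorial types $(\E_i,D_i)$, and fix any vertex $v_0\in V(\Gamma)$. By Proposition \ref{prop:polyquasi}, for each $i=1,2$ there is a $(v_0,\mu)$-quasistable type $(\E'_i,D'_i)\leq (\E_i,D_i)$ with $\pol(\E'_i,D'_i)=(\E_i,D_i)$; the corresponding graph specialization contracts, for each $e\in\E_i\setminus\E'_i$, the exceptional vertex $v_e$ of $\Gamma^{\E_i}$ onto a prescribed endpoint $v^*$ of $e$ in $V(\Gamma)$. I lift this edge by edge to $X$: for every such $e$ I replace $\D_i$ by $\D_i+\Div(h_e)$, where $h_e$ is the piecewise linear function of slope $+1$ on the segment $\overline{v^*p_{\D_i,e}}$ and constant on the rest of $X$, so that $\Div(h_e)=p_{\D_i,e}-v^*$. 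This move absorbs the $-1$-point at $p_{\D_i,e}$ into $v^*$ and leaves the rest of $\D_i$ intact, producing a unitary $(v_0,\mu)$-quasistable divisor $\D'_i$ of combinatorial type $(\E'_i,D'_i)$ equivalent to $\D_i$. Since $\D_1\sim\D_2$, we have $\D'_1\sim\D'_2$, and by the uniqueness of the $(v_0,\mu)$-quasistable representative in each equivalence class (\cite[Theorem 5.10]{AP1}) we conclude $\D'_1=\D'_2$. In particular $(\E'_1,D'_1)=(\E'_2,D'_2)$, so applying the well-defined map $\pol$ yields $(\E_1,D_1)=(\E_2,D_2)$, as required.

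The main obstacle I expect is the bookkeeping needed to lift the graph-theoretic moves of Propositions \ref{prop:semispec} and \ref{prop:polyquasi} to actual principal divisors on $X$: writing down piecewise linear functions whose $\Div$ realizes exactly the prescribed combinatorial modification while leaving the rest of the divisor intact. An alternative self-contained proof of (2) would apply Lemma \ref{lem:fmin} to the min-locus $Z$ of a rational function $g$ with $\Div(g)=\D_1-\D_2$ and combine the strict positivity of the incoming slopes at $\partial Z$ with the strict inequalities coming from polystability to force $g$ to be constant; this is closer to the approach of \cite[Proposition 4.4]{CPS}, but the case analysis is more delicate.
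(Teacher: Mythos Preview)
Your argument for item (1) is essentially the paper's own proof: reduce to a unitary $\mu$-semistable divisor, then realize each step of the graph-theoretic refinement from Proposition \ref{prop:semispec} by an explicit piecewise linear function on $X$, and iterate. This part is fine.

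Your proposed proof of item (2), however, has a genuine gap. The rational function $h_e$ you describe does not exist in general: a piecewise linear function on $X$ cannot have slope $+1$ on a single segment $\overline{v^*p_{\D_i,e}}$ and be constant on the rest of $X$ unless that segment disconnects $X$, which fails whenever $e$ lies on a cycle of $\Gamma$. More seriously, the underlying claim is false: given a $\mu$-polystable $\D_i$ of type $(\E_i,D_i)$ and a $(v_0,\mu)$-quasistable type $(\E'_i,D'_i)$ supplied by Proposition \ref{prop:polyquasi}, there need not be any divisor of type $(\E'_i,D'_i)$ linearly equivalent to $\D_i$. On the theta graph (two vertices, three edges of length $1$, $\mu\equiv -1/2$), the polystable divisor $\D=v_0+v_1-p_1-p_2-p_3$ with each $p_i$ at the midpoint of $e_i$ has as its unique $(v_0,\mu)$-quasistable representative the divisor $v_0-2v_1$, whose type $(\emptyset,D')$ has rank $0$; hence $\D$ is not equivalent to any divisor whose type is one of the rank-$2$ quasistable types produced by Proposition \ref{prop:polyquasi}. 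The obvious repair---replace $(\E'_i,D'_i)$ by the type of the actual quasistable representative of $\D_i$---does give $(\E'_1,D'_1)=(\E'_2,D'_2)$, but then you must still show $\pol(\E'_i,D'_i)=(\E_i,D_i)$, and the only route to this that avoids circularity is precisely the statement you are trying to prove.

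The paper's proof of (2) proceeds instead by the direct min-locus argument you mention as an alternative: one analyzes the subcurve $Z$ where $f$ (with $\Div(f)=\D'-\D$) attains its minimum, uses polystability (via Lemma \ref{lem:betazero}) to force $E(V,V^c)\subset\E\cap\E'$ and to match the types on $\Gamma(V)$, and then inducts on the number of connected components of $\Gamma_\E$ by restricting to $X_{V^c}$. Note that one does \emph{not} conclude that $f$ is constant (it typically is not); only the combinatorial types are forced to agree.
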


\begin{Rem}
\label{rem:D1D2}
Although a $\mu$-polystable divisor on a tropical curve is not unique in its equivalence class, Theorem \ref{thm:poly} tells us that the difference of equivalent polystable divisors is well behaved. In fact, let $\D_1$ and $\D_2$ be two equivalent $\mu$-polystable divisors. By item (2) of Theorem \ref{thm:poly}, the divisors $\D_1$ and $\D_2$ have the same combinatorial type $(\E,D)$, hence $\D_1-\D_2$ is a principal divisor of type:
\[
\D_1-\D_2=\sum_{e\in \E} (p_e-q_e),
\]
where $p_e,q_e\in e^0$ are, possibly equal, points in the interior of $e$. 
In other words, $\D_1-\D_2$ is a principal $\E$-divisor. Recall that we described some important properties of $\E$-divisors in Lemmas \ref{lem:Eunitary} and \ref{lem:DLE}. Later on, this will be crucial to  construct a universal tropical Jacobian over the moduli space of tropical curves.
\end{Rem}

Before proving Theorem \ref{thm:poly}, we shall give an example to explain how to convert a divisor into a polystable divisor.

\begin{Exa}\label{exa:convert}
Let $X$ be a tropical curve $X$ as in Example \ref{exa:KE}. Let $X$ be the polarization on $X$ such that $\mu(v)=-1/2$ for every $v\in V(\Gamma)$. Consider the divisor  $\D=v_0-v_1-p$, where $V(\Gamma)=\{v_0,v_1\}$ and $p\in e^\circ$ for some $e\in E(\Gamma)$. Note that $\D$ is $v_0$-quasistable, but not polystable. In Figure \ref{fig:polyequiv} we illustrate how to convert $\D$ into a polystable divisor. 
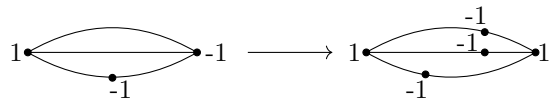
\begin{figure}[h]
\begin{tikzpicture}[scale=2.25]
\begin{scope}[shift={(0,0)}]
\draw (0,0) to [out=30, in=150] (1,0);
\draw (0,0) to (1,0);
\draw (0,0) to [out=-30, in=-150] (1,0);
\draw[fill] (0,0) circle [radius=0.02];
\draw[fill] (1,0) circle [radius=0.02];
\draw[fill] (0.7,0) circle [radius=0.02];
\draw[fill] (0.7,0.12) circle [radius=0.02];
\draw[fill] (0.35,-0.13) circle [radius=0.02];
\node at (-0.07,0) {1};
\node at (1.05,0) {1};
\node at (0.65,0.22) {-1};
\node at (0.6,0.06) {-1};
\node at (0.3,-0.22) {-1};
\end{scope}
\draw[->] (-0.7,0) to (-0.2, 0);
\begin{scope}[shift={(0,0)}]
\draw (0-2,0) to [out=30, in=150] (1-2,0);
\draw (0-2,0) to (1-2,0);
\draw (0-2,0) to [out=-30, in=-150] (1-2,0);
\draw[fill] (0-2,0) circle [radius=0.02];
\draw[fill] (1-2,0) circle [radius=0.02];
%\draw[fill] (0.5+2,0.144) circle [radius=0.02];
%\draw[fill] (0.5+2,0) circle [radius=0.02];
\draw[fill] (0.5-2,-0.15) circle [radius=0.02];
\node at (-0.07-2,0) {1};
\node at (1.05-2,0) {\;\;\;-1};
%\node at (0.55+2,0.23) {-1};
%\node at (0.55+2,0.06) {-1};
\node at (0.55-2,-0.22) {-1};
\end{scope}
\end{tikzpicture}
\caption{Converting a quasistable divisor into a polystable divisor.}
\label{fig:polyequiv}
\end{figure}
\end{Exa}

\begin{proof}[Proof of item (1) of Theorem \ref{thm:poly}]
  By \cite[Theorem 5.6]{AP1}, every divisor on $X$ is equivalent to a $\mu$-semistable unitary divisor. So it suffices to show that every $\mu$-semistable unitary divisor $\D$ on $X$ is equivalent to a $\mu$-polystable divisor on $X$.\par
  
  Let $\Gamma$ be the $\mu$-model of $X$.
  Let $(\E,D)$ be the combinatorial type of $\D$. If $(\E,D)$ is $\mu$-polystable, there is nothing to do. Otherwise there exists $V\subset V(\Gamma)$ such that $E(V,V^c)\not\subset \E$ and $\beta_{\E,D}(V)=0$. We can choose $V$ such that $E(V,V^c)$ is a minimal disconnecting subset. We apply Proposition \ref{prop:semispec} to get a $\mu$-semistable pseudo-divisor $(\E\cup E(V,V^c),D_1)$ such that $(\E\cup E(V,V^c),D_1)>(\E,D)$. 
  
  We claim that there exists a divisor $\D_1$ on $X$ with combinatorial type $(\E\cup E(V,V^c),D_1)$ that is equivalent to $\D$.
  Indeed, for each $e\in E(V,V^c)$ let $v_e,w_e$ be the vertices incident to $e$ such that $v_e\in V$ and $w_e\in V^c$. Also, for each $e\in \E\cap E(V,V^c)$, let $p_e\in e^\circ$ be the point of $e^\circ $ such that $\D(p_e)=-1$, while, if $e\in E(V,V^c)\setminus \E$, let $p_e:=v_e$. Let $\ell$ be the length function of $X$ and define: 
  \[
  r=\min\{\ell(\ol{p_ew_e});e\in E(V,V^c)\}.
  \]
For $e\in E(V,V^c)$, let $q_e\in \ol{p_ew_e}$ be the point such that $\ell(\ol{p_eq_e})=\frac{r}{2}$. Let $f$ be the rational function with slope $1$ on $\ora{q_ep_e}$ and $0$ everywhere else. Note that
 \[
 \Div(f)(p)=
 \begin{cases}
 \val_{E(V)\setminus\E}(v_e) & \text{ if $p=v_e$ for some $e\in E(V,V^c)\setminus \E$},\\
 1 & \text{ $p=p_e$ for some $e\in E(V,V^c)\cap \E$},\\
 -1& \text{ $p=q_e$ for some $e\in E(V,V^c)$},\\
 0& \text{ otherwise.}
  \end{cases}
 \]
Define $\D_1=\D+\Div(f)$.
 Then, comparing to Equation \eqref{eq:D1}, the combinatorial type of $\D_1$ is $(\E\cup E(V,V^c),D_1)$, proving the claim. 
 
 Repeating this process and recalling Equation \eqref{eq:seqsemi} one can prove that there exists a $\mu$-polystable divisor $\D_k$ equivalent to $\D$. 
\end{proof}

%Let $\D$ be a $\mu$-polystable divisor on $X$. We denote by $X^1_{\E_\D},\dots,X^{c_\D}_{\E_\D}$ the connected components of $X_{\E_\D}$, and we set $\D^i_{\E_\D}:=\D|_{X^i_{\E_\D}}$ for $i=1,\dots,c_\D$. We call $(X^1_{\E_\D},\D^1_{\E_\D}),\dots,(X^{c_\D}_{\E_\D},\D^{c_\D}_{\E_\D})$ the \emph{$\D$-factors} and $c_\D$ the \emph{length} of $\D$. 

%\begin{Lem}
%Let $f$ be a rational function on a segment $e=\ol{pq}$ such that $f(p)=f(q)$. Then $\Div(f)|_{e^\circ}\neq R_1-R_2$ for every $R_1,R_2\in %e$ with $R_1\neq R_2$.
%\end{Lem}

Before proving item (2) of Theorem \ref{thm:poly}, we need a lemma.

\begin{Lem}\label{lem:betazero}
 Let $X$ be a tropical curve. Let $\D$ be a $\mu$-polystable divisor on $X$ of combinatorial type $(\E,D)$. Let $Z$ be a tropical subcurve of $X$. Consider $V=Z\cap V(\Gamma)$ and, for every $e\in E(V,V^c)$, let $v_e$ be the endpoint of $e$ with $v_e\in V$. If $V\ne\emptyset$ and $\beta_\D(Z)=0$, then $E(V,V^c)\subset \E$ and $\ol{v_ep_{\D,e}}\subset Z$ for every $e\in E(V,V^c)$.
\end{Lem}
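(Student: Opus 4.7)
The plan is to chain the two inequalities of \eqref{eq:betaWZW} and the discussion preceding \eqref{eq:betaXGamma} into
$$0=\beta_\D(Z)\ \ge\ \beta_\D(X_{V'})\ =\ \beta_D(V')\ \ge\ \beta_{\E,D}(V)\ \ge\ 0,$$
where $V':=Z\cap V(\Gamma^\E)$ and $V=V'\cap V(\Gamma)$; the last inequality uses that $(\E,D)$ is $\mu$-polystable and that $V\ne\emptyset$. Under the hypothesis $\beta_\D(Z)=0$ the whole chain collapses to equalities, and the two conclusions of the lemma will be read off from the corresponding equality conditions.

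For conclusion (1), the equality $\beta_{\E,D}(V)=0$ combined with the polystability condition (which requires $\beta_{\E,D}(V)>0$ whenever $E(V,V^c)\not\subset \E$) forces $E(V,V^c)\subset \E$ at once. For conclusion (2), I first invoke the equality condition in \eqref{eq:betaWZW}, which forces $V'=\widetilde V$. Since $E(V,V^c)\subset\E$ by part (1), the set $\widetilde V$ contains the exceptional vertex of $\Gamma^\E$ sitting inside each edge $e\in E(V,V^c)$; under the identification of exceptional vertices of $\Gamma^\E$ with the support of $\D$ outside $V(\Gamma)$, this exceptional vertex is precisely $p_{\D,e}$, so $p_{\D,e}\in V'\subset Z$. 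To upgrade this to the full segment $\ol{v_e p_{\D,e}}\subset Z$, I use the remaining equality $\beta_\D(Z)=\beta_\D(X_{V'})$: since $Z$ and $X_{V'}$ share the vertex set $V'$ and all support of $\D$ and of $\mu$ lies on $V(\Gamma^\E)$, this equality reduces to $\delta_{X,Z}=\delta_{X,X_{V'}}$. An edge-by-edge count on $\Gamma^\E$ then shows that any missing interior of an edge whose two endpoints both lie in $V'$ would strictly increase $\delta_{X,Z}$ (by at least~$2$); hence $Z$ must contain every edge of $\Gamma^\E$ both of whose endpoints lie in $V'$. In particular $Z$ contains the edge of $\Gamma^\E$ running from $v_e$ to the exceptional vertex $p_{\D,e}$, and this edge is exactly the segment $\ol{v_e p_{\D,e}}$.

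The main technical obstacle is the $\delta$-count of the last step: the equality condition for the inequality $\beta_\D(Z)\ge\beta_\D(X_{V'})$ is not stated explicitly earlier in the paper, so it must be teased out here by handling the several patterns in which $Z$ can deviate from $X_{V'}$ (missing interiors of edges with both endpoints in $V'$, partial extensions into edges of $\Gamma^\E$ with only one endpoint in $V'$, and floating sub-segments inside edges with no endpoint in $V'$) and observing that only the second pattern is $\delta$-neutral, while the other two strictly increase $\delta_{X,Z}$ beyond $\delta_{X,X_{V'}}$.
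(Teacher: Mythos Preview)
Your proof is correct and largely parallels the paper's own argument. The chain of inequalities and the deductions of $E(V,V^c)\subset\E$ and $p_{\D,e}\in Z$ match the paper exactly: both use the equality cases in \eqref{eq:betaWZW} and \eqref{eq:betaXGamma} to force $\beta_{\E,D}(V)=0$ (hence $E(V,V^c)\subset\E$ by polystability) and $V'=\widetilde V$ (hence $p_{\D,e}\in Z$).

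The only genuine divergence is in the last step, upgrading $p_{\D,e}\in Z$ to $\ol{v_e p_{\D,e}}\subset Z$. You extract this from the remaining equality $\beta_\D(Z)=\beta_\D(X_{V'})$, which reduces to $\delta_{X,Z}=\delta_{X,X_{V'}}$, and then run an edge-by-edge count on $\Gamma^\E$ to conclude that $Z$ must contain every edge of $\Gamma^\E$ whose endpoints both lie in $V'$ (in particular the edge from $v_e$ to the exceptional vertex $p_{\D,e}$). The paper instead sidesteps this case analysis with a short semistability trick: it computes
\[
\beta_\D\bigl(Z\cup\ol{v_e p_{\D,e}}\bigr)=\beta_\D(Z)-\tfrac{1}{2}\bigl(\delta_{X,Z}-\delta_{X,Z\cup\ol{v_e p_{\D,e}}}\bigr),
\]
and since $\beta_\D(Z)=0$ while the left side is $\ge 0$ by semistability, this forces $\delta_{X,Z}\le\delta_{X,Z\cup\ol{v_e p_{\D,e}}}$, which (given that both $v_e$ and $p_{\D,e}$ already lie in $Z$) can only hold when $\ol{v_e p_{\D,e}}\subset Z$. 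The paper's route is slicker because it reuses semistability rather than unpacking $\delta$; yours is more explicit about what the equality $\beta_\D(Z)=\beta_\D(X_{V'})$ really says, and in fact yields the stronger conclusion $X_{V'}\subset Z$. Both are valid.

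One small remark: the inequality $\beta_{\E,D}(V)\ge 0$ follows from semistability for every $V$, so the hypothesis $V\ne\emptyset$ is not actually needed at that point in the chain (it is of course harmless, since it is part of the statement).
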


\begin{proof}
If $Z=Z_1\coprod Z_2$, then $\beta_\D(Z)=\beta_\D(Z_1)+\beta_\D(Z_2)$, and the result for $Z$ follows from the result for $Z_1$ and $Z_2$. Therefore, we can assume that $Z$ is connected. By Equation \eqref{eq:betaXGamma}, we have 
\begin{equation}\label{eq:equalzero}
0=\beta_\D(Z)\geq \beta_{\E,D}(V).
\end{equation}
Since $(\E,D)$ is $\mu$-polystable, we have that $\beta_{\E,D}(V)=0$ and so, by the definition of polystability, we deduce that $E(V,V^c)\subset \E$. \par

  Let us prove that
  $\ol{v_ep_{\D,e}}\subset Z$ for every $e \in E(V,V^c)$. Since equality holds in Equation \eqref{eq:equalzero}, we have that $Z\cap V(\Gamma^\E)=\widetilde{Z\cap V(\Gamma)}$ by Equations \eqref{eq:betaWZW} and \eqref{eq:betaXGamma}.
  In particular, $p_{\D,e}\in Z$ for every $e\in \E\cap E(V,V^c)$. Hence,
 \begin{align*}
 \beta_\D(Z\cup \ol{v_ep_{\D,e}})&=\deg(\D|_{Z\cup\ol{v_ep_{\D,e}}})-\mu(Z\cup \ol{v_ep_{\D,e}})+\frac{\delta_{X,Z\cup \ol{v_ep_{\D,e}}}}{2}\\
                                 &=\deg(\D|_Z)-\mu(Z)+\frac{\delta_{X,Z}}{2}-\frac{\delta_{X,Z}-\delta_{X,Z\cup \ol{v_ep_{\D,e}}}}{2}\\
                                 &=\beta_{\D}(Z)-\frac{\delta_{X,Z}-\delta_{X,Z\cup \ol{v_ep_{\D,e}}}}{2}.
 \end{align*}
Since $\beta_{\D}(Z)=0$ and $\beta_\D(Z\cup \ol{v_ep_{\D,e}})\geq0$, we get
\begin{equation}\label{eq:deltaXZ}
\delta_{X,Z}\leq\delta_{X,Z\cup \ol{v_ep_{\D,e}}}.
\end{equation}
This can only happen if $\ol{v_ep_{\D,e}}\subset Z$ (recall that $v_e,p_{\D,e}\in Z$), and in this case, equality holds in \eqref{eq:deltaXZ}.
\end{proof}

%\begin{Thm}\label{thm:ployfactor}
%The factors of equivalent $\mu$-polystable divisors are equal.
%\end{Thm}

\begin{proof}[Proof of item (2) of Theorem \ref{thm:poly}]
  Let $\Gamma$ be the $\mu$-model of $X$.
Let $\D$ and $\D'$ be equivalent $\mu$-polystable divisors on $X$ of combinatorial type $(\E,D)$ and $(\E',D')$. We proceed by induction on the number of connected components of $\Gamma_\E$.
Write $\D'-\D=\Div(f)$, with $f$ a rational function on $X$. Let $Z\subset X$ be the set of points where $f$ attains its minimum.  Define $V=V(\Gamma)\cap Z$. By Lemma \ref{lem:fmin}, we have $X_V\subset Z$, hence
 %\[
 %\{e \in E(\Gamma); \emptyset\ne e\cap Z\subsetneqq e\} =E(V).
 %\]
  in particular $\delta_{X,Z}=|E(V,V^c)|$. For every $e\in E(V,V^c)$, let $v_e,w_e$ be the endpoints of $e$, with $v_e\in V$ and $w_e\in V^c$, so that $v_e\in Z$ and $w_e\notin Z$. Also, define $p_e$ such that $Z\cap e= \overline{v_ep_e}$ (by Lemma \ref{lem:fmin}, such a point $p_e$ exists). We let $k_e>0$ be the slope of $f$ going out of $p_e$ in the direction of $e$. Then
\[
\beta_{\D'}(Z)-\beta_\D(Z)=\deg_{\D'}(Z)-\deg_\D(Z)=\sum_{e \in E(V,V^c)} k_e.
\]
Since a polystable divisor is semistable, we get $k_e=1$ for every $e\in E(V,V^c)$, hence $\beta_\D(Z)=0$ and  $\beta_{\D'}(Z)= |E(V,V^c)|$. By Lemma \ref{lem:betazero} we obtain that 
%$(Z_{\E_D},D|_{\E_D})$ is a union of $D$-factors and $e_{D,Z}\subset Z$ for every $e\in \E_D$. Similarly, using that $\beta_{D'}((Z_\varepsilon)^c,\mu)=0$, we get that $Z^c_{\E_{D'}}$ (if not empty) is a union of $D'$ factor and $e_{D',Z^c}\subset (Z_\varepsilon)^c$ for every $e\in \E_{D'}$. In particular, we obtain 
 \begin{equation}\label{eq:EE'}
 E(V,V^c)\subset \E\cap \E'.
 \end{equation}
and $p_{\D,e}\in Z$ for every $e\in E(V)$. Similarly, if $Z_0$ is some sufficiently small neighborhood of $Z$, then $\beta_{\D'}(Z_0^c)=0$, hence $p_{\D',e}\in Z_0^c$, so that $p_{\D,e}\ne p_{\D',e}$. 
Moreover, since $f$ is constant on $Z$,  it is constant on a neighborhood of $X_V$. Therefore $\D|_{X_V}=\D'|_{X_V}$, and hence,
\begin{equation}
\label{eq:EDV}
(\E,D)|_{\Gamma(V)}=(\E',D')|_{\Gamma(V)},
\end{equation}
where $\Gamma(V)=(V,E(V))$ is the subgraph of $\Gamma$ induced by $V$.
% Hence each connected component of $(Z_{\E_D},D|_{\E_D})$ is a $D$-factor and a $D'$-factor.

If $f$ is constant, then $Z=X$, and we are done.  So, assume that $f$ is not constant, and hence $Z\ne X$.

We claim that the slope of $f$ is $1$ over $\overline{p_{\D,e}p_{\D',e}}$ and $0$ over $\overline{v_e p_{\D,e}}\cup \overline{ p_{\D',e}w_e}$, for every $e\in E(V,V^c)$. Indeed for every $e\in E(V,V)$, the point $p_{\D,e}$ (respectively, $p_{\D',e}$) is the unique point of $e^0$ over which $\D$ (respectively, $\D'$) is different from $0$.  Since the slope of $f$ changes from $0$ to $1$ at the point $p_e$ of $e^\circ$, which is different from $p_{\D',e}$, we get $\D(p_e)\ne 0$, and hence   $p_e=p_{\D,e}$. Thus $f$ has slope zero on $\overline{v_ep_e}$ and $1$ on $\overline{p_eq_e}$, for some $q_e\in e\cap Z^c$. 
%On the other hand, $p_{D',e}$ is the unique point in $e^0$ over which $D'$ is different form $0$. Thus 
The condition $\D'-\D=\Div(f)$ implies $q_e=p_{\D',e}$, and forces $f$ to have slope zero over  $\overline{q_ew_e}$ and hence, over the whole set   $\ol{v_ep_e}\cup \ol{q_ew_e}$, proving the claim.  

Now take $X_{V^c}$.  Consider the rational function $\wh f:=f|_{X_{V^c}}$ and the polarization $\wh\mu:=\mu_{E(V)}|_{X_{V^c}}$ on $X_{V^c}$. Define the divisors on $X_{V^c}$:
\[
\wh \D:=\D|_{X_{V^c}}
\;\; \text{ and } \;\;
\wh \D':=\D'|_{X_{V^c}}.
\]
Since $f$ has slope $0$ on $\ol{p_{\D',e}w_e}$ for every $e\in E(V)$, we deduce that:
\[
\Div(\wh f)=\Div(f)|_{X_{V^c}}=\wh \D'-\wh \D.
\]
 Moreover, $\wh \D$ and $\wh \D'$ have combinatorial type $(\E, D)|_{\Gamma({V^c})}$ and $(\E',D')|_{{\Gamma({V^c})}}$, which are  $\wh\mu$-polystable pseudo-divisors, by Equation \eqref{eq:betaee}. By inductive hypothesis, 
 \begin{equation}
 \label{eq:EDVc}
 (\E,D)|_{{\Gamma({V^c})}}=(\E',D')|_{{\Gamma({V^c})}}.
\end{equation}
Combining Equations \eqref{eq:EE'}, \eqref{eq:EDV} and \eqref{eq:EDVc}, we get $(\E,D)=(\E',D')$, as required.
\end{proof}

\section{A universal tropical Jacobian over $M_g^{\trop}$}\label{sec:Jac}

\subsection{A polyhedral decomposition of the tropical Jacobian}

In this section, we will construct a polyhedral decomposition of the Jacobian of a tropical curve by means of $\mu$-polystable divisors, and we will compare it with known decompositions. 

In this section, we denote by $X$ a tropical curve with a polarization $\mu$ and $\mu$-model $\Gamma$. We already defined the polytopes $P_{\E,D}(X)$ for a pseudo-divisor $(\E,D)$ on $\Gamma$. Recall that if $(\E',D')\geq(\E,D)$, then there are  natural inclusions $K_{\E,D}(X)\subset K_{\E',D'}(X)$ and $P_{\E,D}(X)\subset P_{\E',D'}(X)$ (recall Equation \eqref{eq:PED}). We will prove that these polytopes glue nicely when we consider $\mu$-polystable pseudo-divisors. 

\begin{Prop}
\label{prop:semipolyint}
Let $(\E,D)$ be a $\mu$-semistable pseudo-divisor on $\Gamma$. If we let $(\E',D')=\pol(\E,D)$, then
\[
P_{\E,D}^\circ(X)\subset P_{\E',D'}^\circ(X).
\]
\end{Prop}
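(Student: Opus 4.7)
The plan is to exhibit, for each unitary divisor $\D\in K^\circ_{\E,D}(X)$, a linearly equivalent unitary divisor $\D^*\in K^\circ_{\E',D'}(X)$ and to verify that $u_{\D^*}-u_\D\in L_{\E'}$ when $u_\D$ is viewed in $\R^{\E'}$ via the zero-extension $\R^{\E}\hookrightarrow\R^{\E'}$. This forces the image of $\D$ in $P_{\E',D'}(X)=K_{\E',D'}(X)/L_{\E'}$ to coincide with the image of $\D^*$, which lies in $P^\circ_{\E',D'}(X)$ by Proposition \ref{prop:parameter}.

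To produce $\D^*$, I would iterate the construction from the proof of Theorem \ref{thm:poly}(1): starting from $\D_0:=\D$ with combinatorial type $(\E_0,D_0)=(\E,D)$, whenever the current combinatorial type $(\E_i,D_i)$ is not $\mu$-polystable, pick $V_i\subset V(\Gamma)$ minimal disconnecting with $\beta_{\E_i,D_i}(V_i)=0$ and $E(V_i,V_i^c)\not\subset\E_i$, apply Proposition \ref{prop:semispec}, and add $\Div(f_i)$ for the rational function $f_i$ from that proof (slope $1$ along the segments $\overrightarrow{q_{i,e}p_{i,e}}$ for $e\in E(V_i,V_i^c)$, slope $0$ elsewhere). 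The process terminates at a $\mu$-polystable divisor $\D^*$ equivalent to $\D$. By Proposition \ref{prop:minimal} and item (2) of Theorem \ref{thm:poly}, the combinatorial type of $\D^*$ must equal $\pol(\E,D)=(\E',D')$, so $\D^*\in K^\circ_{\E',D'}(X)$.

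For the comparison $u_{\D^*}-u_\D\in L_{\E'}$ I would proceed by induction on the number of stages. Orienting each $e\in E(V_i,V_i^c)$ so that $s(e)\in V_i$ aligns the zero-extension convention $\R^{\E_i}\hookrightarrow \R^{\E_{i+1}}$ with the specialization of $\Gamma^{\E_{i+1}}\to\Gamma^{\E_i}$ that contracts $e^s$ (i.e., that identifies the exceptional vertex with $s(e)$). A direct coordinate-level computation of the positions of the $-1$ points in each edge before and after adding $\Div(f_i)$ then yields
\[
u_{\D_{i+1}}-u_{\D_i}=\tfrac{r_i}{2}\,u_{V_i}\in L_{\E_{i+1}}\subset L_{\E'},
\]
where $r_i>0$ is the constant chosen at the $i$-th step and $u_{V_i}$ is the generator of $L_{\E_{i+1}}$ attached to $V_i$; the inclusion $L_{\E_{i+1}}\subset L_{\E'}$ holds because $E(V_i,V_i^c)\subset\E_{i+1}\subset\E'$. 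Telescoping the identities over $i=0,\dots,k-1$ gives $u_{\D^*}-u_\D\in L_{\E'}$.

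The main obstacle is the orientation/specialization bookkeeping: the signs in the vector $u_{V_i}$ depend on the chosen orientation of $\Gamma$, while the zero-extension convention implicitly selects which side of each newly inserted exceptional vertex is contracted; these two choices must be made compatibly, edge by edge and stage by stage, for the identity above to hold with the correct sign pattern. Once the orientation/specialization dictionary is fixed, each inductive step reduces to an elementary calculation, and the result follows.
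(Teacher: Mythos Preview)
Your approach is correct and follows the same iterative construction as the paper's proof. The only substantive difference is that at each step the paper avoids your explicit coordinate computation $u_{\D_{i+1}}-u_{\D_i}=\tfrac{r_i}{2}\,u_{V_i}$ by invoking Proposition~\ref{prop:parameter} directly: since $\D$ and $\D+\Div(f)$ are linearly equivalent and the latter lies in $K^\circ_{\E_1,D_1}(X)$, they have the same image in $P_{\E_1,D_1}(X)$, which is therefore interior. This sidesteps the orientation/specialization bookkeeping entirely (Lemma~\ref{lem:DLE} works equally well when some $p_e$ or $q_e$ sit at endpoints, so Proposition~\ref{prop:parameter} extends to the boundary point $\D$). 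Your explicit route is fine and your computation is correct once the face inclusion is interpreted via the specialization rather than naive zero-extension, exactly as you anticipate.

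One small simplification: you do not need Theorem~\ref{thm:poly}(2) to identify the combinatorial type of $\D^*$. The iterative process literally follows the sequence \eqref{eq:seqsemi}, and the proof of Proposition~\ref{prop:minimal} already shows that this sequence terminates at $\pol(\E,D)=(\E',D')$, so $\D^*\in K^\circ_{\E',D'}(X)$ by construction.
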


\begin{proof}
If $(\E,D)$ is $\mu$-polystable, there is nothing to do. Otherwise, there is $V\subset V(\Gamma)$ such that $\beta_{\E,D}(V)=0$ and $E(V,V^c)\not\subset \E$. Define $\E_1=\E\cup E(V,V^c)$ and let $(\E_1,D_1)$ be the pseudo-divisor as in Proposition \ref{prop:semispec}. Thus $P_{\E,D}(X)\subset P_{\E_1,D_1}(X)$. 

We claim that 
$P_{\E,D}^\circ(X)\subset P_{\E_1,D_1}^\circ(X)$. 
 Let $\D$ be a divisor on $\Gamma$ with combinatorial type $(\E,D)$. Recall that $p_{\D,e}$ is the point in $e^\circ$ such that $\D(p_{\D,e})=-1$. For $e\in E(V,V^c)\setminus \E$, let $p_{\D,e}:=v_e$. Recall that $\D$ corresponds to a point in $K_{\E,D}^\circ(X)$ and hence to a point in $P_{\E,D}^\circ(X)$ (and vice-versa, every point in $P_{\E,D}^\circ(X)$ corresponds to a divisor $\D$ of this form). For each $e\in E(V,V^c)$, let $v_e,w_e\in V(\Gamma)$ be the vertices incident to $e$ with $v_e\in V$ and $w_e\in V^c$. Let $Z$ be the subcurve of $X$:
 \[
 Z=X_V\cup \bigcup_{e\in E(V,V^c)\cap \E} \ol{v_ep_{\D,e}}. 
 \]
 
  As in the proof of Theorem \ref{thm:poly}, item (1), take $r=\min(\ell(\ol{p_{\D,e}w_e}); e\in E(V,V^c))$. Consider the rational function $f$ with slope $0$ everywhere and slope $1$ on $\ora{p_{\D,e}q_e}$, where $q_e$ is the point in $\ol{p_{\D,e}w_e}$ such that $\ell(\ol{p_{\D,e}q_e})=r/2$.  Hence $\D+\Div(f)$ has combinatorial type $(\E_1,D_1)$ (recall Equation \eqref{eq:D1}). This means that the points in $K_{\E_1,D_1}(X)$ corresponding to $\D$ and $\D+\Div(f)$ have the same image in $P_{\E_1,D_1}(X)$ (recall Proposition \ref{prop:parameter}). However, the point associated to $\D+\Div(f)$ lies in the interior $K_{\E_1,D_1}^\circ(X)$, and hence in the interior $P_{\E_1,D_1}^\circ(X)$. Hence we get an inclusion $P_{\E,D}^\circ(X)\subset P_{\E_1,D_1}^\circ(X)
$, proving the claim. \par

 Using Equation \eqref{eq:seqsemi}, we can iterate the argument and obtain the result.
 \end{proof}

\begin{Prop}
\label{prop:face2}
If $(\E,D)$ and $(\E',D')$ are $\mu$-polystable pseudo-divisors on $\Gamma$ such that $(\E',D')\geq (\E,D)$, then $P_{\E,D}(X)$ is a face of $P_{\E',D'}(X)$. Conversely, every face of $P_{\E',D'}(X)$ arises in this way.
\end{Prop}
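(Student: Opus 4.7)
The plan is to realize $P_{\E,D}(X)$ as the vanishing locus of a supporting affine functional on $P_{\E',D'}(X)$, thereby exhibiting it as a face.

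First I would fix a graph specialization $\iota\colon \Gamma^{\E'}\to \Gamma^{\E}$ realizing the pseudo-divisor specialization $(\E',D')\geq(\E,D)$. For each $e\in \E'\setminus \E$, the exceptional vertex $v_e$ is identified by $\iota$ with either $s(e)$ or $t(e)$; this partitions $\E'\setminus \E$ into subsets $S^0$ and $S^\ell$. Setting $x_e=0$ for $e\in S^0$ and $x_e=\ell(e)$ for $e\in S^\ell$ cuts out a face $F\subset K_{\E',D'}(X)$ that is naturally identified with $K_{\E,D}(X)$, and under $T_{\E'}$ maps onto the image of $P_{\E,D}(X)$ inside $P_{\E',D'}(X)$.

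Next I would seek an affine functional of the form
\[
\tilde\lambda(x)=\sum_{e\in S^0}(1+\alpha_e)\,x_e+\sum_{e\in S^\ell}(1+\beta_e)\bigl(\ell(e)-x_e\bigr),
\]
with nonnegative coefficients $\alpha_e,\beta_e\geq 0$, satisfying (i) $\tilde\lambda\geq 0$ on $K_{\E',D'}(X)$ with $\{\tilde\lambda=0\}\cap K_{\E',D'}(X)=F$, and (ii) the linear part of $\tilde\lambda$ lies in the orthogonal complement of $L_{\E'}$ (the cycle space of $\Gamma$), so that $\tilde\lambda$ descends to an affine functional on $\R^{\E'}/L_{\E'}$. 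Condition (i) is automatic from the sign constraints; condition (ii) is a system of balance equations, one per generator $u_V$ of $L_{\E'}$ (i.e., one per connected component $V$ of $\Gamma_{\E'}$). The naive choice $\alpha_e=\beta_e=0$ need not satisfy (ii), as one sees in examples where $\E'\setminus \E$ is absorbed asymmetrically.

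The main obstacle is to prove that a nonnegative solution $(\alpha_e,\beta_e)$ always exists, and this is where polystability becomes essential. The key is that the polystability of $(\E,D)$ --- specifically the strict inequality $\beta_{\E,D}(\{v\})>0$ whenever $\{v\}$ is not a component of $\Gamma_\E$, together with the vanishing $\beta_{\E',D'}(V)=0$ on components of $\Gamma_{\E'}$ --- provides exactly the slack needed to solve the balance equations. I would establish this either by a Farkas-type dual argument, or more concretely by expressing the correction $\sum \alpha_e u_e - \sum \beta_e u_e$ as a nonnegative combination of oriented cycles of $\Gamma|_{\E'\setminus \E}$ adapted to the sign pattern induced by $S^0,S^\ell$; the polystability of both pseudo-divisors forces the ``defect'' of the naive $\lambda$ on each $u_V$ to be compatible with such a cycle decomposition. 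Once $\tilde\lambda$ is constructed it descends, and its zero locus cuts out $P_{\E,D}(X)$ as a face of $P_{\E',D'}(X)$.

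For the converse direction, an arbitrary face $F'$ of $P_{\E',D'}(X)$ is cut out by a supporting affine functional which lifts to a nonnegative linear functional on $\R^{\E'}$ vanishing on $L_{\E'}$; its vanishing locus on $K_{\E',D'}(X)$ is a face obtained by fixing some coordinates to $0$ or $\ell(e)$, and so defines a specialization $(\E',D')\geq(\widetilde\E,\widetilde D)$. Applying Proposition \ref{prop:semipolyint} to replace $(\widetilde\E,\widetilde D)$ by its polystabilization $(\E,D)=\pol(\widetilde\E,\widetilde D)$ and invoking the uniqueness in Theorem \ref{thm:poly}, one identifies $F'=P_{\E,D}(X)$ with $(\E,D)$ polystable and $(\E',D')\geq(\E,D)$, completing the converse.
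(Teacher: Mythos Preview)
Your converse direction is essentially the paper's argument: faces of $P_{\E',D'}(X)$ are images of faces of $K_{\E',D'}(X)$, hence of the form $P_{\tilde\E,\tilde D}(X)$ for semistable $(\tilde\E,\tilde D)$, and Proposition~\ref{prop:semipolyint} lets you replace $(\tilde\E,\tilde D)$ by its polystabilization. (The invocation of Theorem~\ref{thm:poly} here is not quite the right tool; what you actually need is the elementary convexity fact that a convex subset of $P_{\E',D'}$ whose relative interior meets the relative interior of a face is contained in that face, which yields $P_{\pol(\tilde\E,\tilde D)}=P_{\tilde\E,\tilde D}$.)

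The forward direction, however, has a real gap. You reduce the problem to finding nonnegative $(\alpha_e,\beta_e)$ making the linear part of $\tilde\lambda$ vanish on $L_{\E'}$, but you do not prove such a solution exists. The sketches you offer (``Farkas-type dual argument'', ``nonnegative combination of oriented cycles adapted to the sign pattern'') are not arguments: you have not identified which inequalities polystability provides, nor shown why they imply feasibility of your system. Worse, your ansatz for $\tilde\lambda$ is supported only on the coordinates $e\in\E'\setminus\E$; this forces the zero set in $K_{\E',D'}$ to be exactly the face $K_{\E,D}$, which is a strictly stronger requirement than what is needed (namely that the zero set \emph{project} onto $P_{\E,D}$). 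When $(\E,D)$ fails to be polystable your ansatz is provably infeasible, so any proof must use polystability in an essential way---and you have not shown how.

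The paper sidesteps this entirely. It proves the converse \emph{first}, establishing that every face of $P_{\E',D'}(X)$ equals $P_{\E'',D''}(X)$ for some polystable $(\E'',D'')$. For the forward direction it then argues indirectly: $P_{\E,D}(X)$ sits inside some \emph{minimal} face $P_{\E'',D''}(X)$, so their relative interiors meet; Proposition~\ref{prop:parameter} and the uniqueness in Theorem~\ref{thm:poly} then force $(\E,D)=(\E'',D'')$, whence $P_{\E,D}(X)$ \emph{is} that face. No supporting functional is ever constructed. This is both shorter and complete; if you want to pursue the direct construction you should expect it to require substantially more work than your sketch suggests.
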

\begin{proof}
  	 Let $(\E',D')$ be a $\mu$-polystable pseudo-divisor on $\Gamma$. First, 
  	 note that every face of $P_{\E',D'}(X)$ is the image of a face of $K_{\E',D'}(X)$, hence it is of the form $P_{\E_0,D_0}(X)$ for some $\mu$-semistable pseudo-divisor $(\E_0,D_0)$ (recall Remark \ref{rem:semispec}). By Proposition \ref{prop:semipolyint}, 
  	 \[
  	 P_{\E_0,D_0}^{\circ}\subset P_{\pol({\E_0,D_0})}^\circ\subset P_{\E',D'}
  	 \]
  	 hence $P_{\E_0,D_0} = P_{\pol({\E_0,D_0})}$. This proves the second statement. 
  	 
  	 Second, let $(\E,D)$ be a $\mu$-polystable pseudo-divisor on $\Gamma$ with $(\E',D')\geq (\E,D)$.  Hence $P_{\E,D}(X)$ is contained in a minimal face $P_{\E'',D''}(X)$ (see Equation \eqref{eq:PED}), where $(\E'',D'')$ is $\mu$-polystable and $(\E'',D'')\geq (\E',D')$. Thus $P^\circ_{\E',D'}(X)\cap P^\circ_{\E'',D''}(X)\neq\emptyset$. By Proposition \ref{prop:parameter} and Theorem \ref{thm:poly},  we have $(\E'',D'')=(\E',D')$, and we are done.
\end{proof}

\begin{Prop}\label{prop:ranked}
Let $\Gamma$ be a graph and $\mu$ a  polarization on $\Gamma$. The poset $\ps_{\mu}(\Gamma)$ is ranked of dimension $b_1(\Gamma)$ and is connected in codimension $1$.
\end{Prop}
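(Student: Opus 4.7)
The plan is to prove the two assertions separately, exploiting Lemma~\ref{lem:rkstrict}, Lemma~\ref{lem:middle}, Proposition~\ref{prop:minimal}, and the strict $\beta$-inequalities built into the definition of polystability.

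For rankedness of dimension $b_1(\Gamma)$: by Lemma~\ref{lem:rkstrict}, the rank function $\rk$ strictly increases along any strict specialization in $\ps_\mu(\Gamma)$, and $0\le \rk(\E,D)\le b_1(\Gamma)$ with the upper bound attained exactly when $\Gamma_\E$ is a forest. So it suffices to show (a) every $\mu$-polystable $(\E,D)$ of rank $r<b_1(\Gamma)$ is strictly dominated by a $\mu$-polystable pseudo-divisor of rank $r+1$, and (b) every covering pair in $\ps_\mu(\Gamma)$ has rank difference $1$. For (a), since $b_1(\Gamma_\E)>0$, I pick a non-bridge edge $e$ of $\Gamma_\E$, set $\E'=\E\cup\{e\}$ (which has rank $r+1$), orient $e$, and define $(\E',D')$ by $D'(v_e)=-1$, $D'(s(e))=D(s(e))+1$, and $D'(v)=D(v)$ otherwise; this pseudo-divisor specializes to $(\E,D)$. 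A case-by-case comparison of $\beta_{\E',D'}(V)$ with $\beta_{\E,D}(V)$, according to how $\{s(e),t(e)\}$ meets $V$, shows that the only decrease occurs (by $-1$) when $s(e)\notin V$ and $t(e)\in V$, in which case $e\in E(V,V^c)\setminus\E$; the strict polystability inequality $\beta_{\E,D}(V)\ge 1$ then forces $\beta_{\E',D'}(V)\ge 0$. Securing the strict polystability inequalities for $(\E',D')$ amounts to choosing $e$ and its orientation so that no \emph{tight} subset (with $\beta_{\E,D}(V)=1$) contains $t(e)$ while excluding $s(e)$ together with another non-$\E$ edge in its boundary; such a choice can be made by selecting $e$ from a suitable minimal cycle of $\Gamma_\E$. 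For (b), given a comparable pair $(\E_1,D_1)>(\E_2,D_2)$ with rank difference at least $2$, I pick $e_0\in \E_1\setminus\E_2$ lying in a cycle of $\Gamma_{\E_1\setminus\{e_0\}}$ (such an edge must exist since $\rk(\E_1)>\rk(\E_2)$ forces $\E_1\setminus\E_2$ to contain at least two rank-decreasing edges), set $\F=\E_1\setminus\{e_0\}$ so that $\rk(\F)=\rk(\E_1)-1$, and apply Lemma~\ref{lem:middle} to obtain a unique semistable $(\F,D_\F)$ in between; then $\pol(\F,D_\F)$ from Proposition~\ref{prop:minimal} is polystable with $(\E_2,D_2)<\pol(\F,D_\F)\le(\E_1,D_1)$, and a rank comparison (using Lemma~\ref{lem:rkstrict}) shows it is strictly below $(\E_1,D_1)$, yielding the required intermediate.

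For connectedness in codimension $1$: maximal elements of $\ps_\mu(\Gamma)$ correspond to subsets $\E\subset E(\Gamma)$ whose complement is a spanning forest of $\Gamma$, i.e., to bases of the cographic matroid of $\Gamma$. By the matroid basis-exchange property, any two such $\E, \E'$ are related by a sequence of single-edge swaps $\E=\F_0,\F_1,\ldots,\F_k=\E'$ with $\F_{i+1}=(\F_i\setminus\{e_i\})\cup\{e_i'\}$. Each intermediate set $\F_i\setminus\{e_i\}$ has rank $b_1(\Gamma)-1$, and the construction from part (b) (Lemma~\ref{lem:middle} followed by $\pol$) furnishes a $\mu$-polystable pseudo-divisor on this set sitting below both adjacent maximal pseudo-divisors. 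Concatenating these local links yields the required path in codimension $1$.

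The principal difficulty I expect is verifying the \emph{strict} part of polystability at each constructed extension: both the single-edge extension in (a) and the codimension-$1$ link used for connectedness depend on the fine structure of the tight subsets $V$ (those with $\beta_{\E,D}(V)=1$), and on selecting edges and orientations so that the equalities $\beta_{\E',D'}(V)=0$ occur only for $V$ a union of components of $\Gamma_{\E'}$. All other steps are formal manipulations with the order-preserving section $\pol$ and the unique semistable representatives supplied by Lemma~\ref{lem:middle}.
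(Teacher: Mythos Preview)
Your proposal has genuine gaps in both parts (b) and in the connectedness argument, and the gap in (a) that you flag is not merely a technicality.

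\textbf{Part (b) fails.} Take $\Gamma$ to be two vertices joined by three parallel edges with $\mu(v)=-\tfrac12$ on each vertex (the paper's Example~\ref{exa:polyposet}). The unique rank-$2$ polystable pseudo-divisor $(\E_1,D_1)=(E(\Gamma),\bar D)$ specializes to the rank-$0$ polystable divisor $(\emptyset,(0,-1))$. For any $e_0\in\E_1$, the set $\F=\E_1\setminus\{e_0\}$ has $\rk(\F)=2=\rk(\E_1)$, not $\rk(\E_1)-1$: the remaining single edge of $\Gamma_\F$ is a bridge, so $e_0$ does \emph{not} lie in a cycle of $\Gamma_\F$. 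Your claimed ``rank-decreasing edge'' simply does not exist here. Worse, the intermediate $(\F,D_\F)$ from Lemma~\ref{lem:middle} satisfies $\pol(\F,D_\F)=(\E_1,D_1)$ (one checks $\beta_{\F,D_\F}(\{v\})=0$ with $E(\{v\},\{u\})\not\subset\F$, forcing the polystabilization back up to $\E_1$), so your construction produces no strictly intermediate element. The true intermediate is $(\{e_3\},(0,0,-1))$ with $|\E|=1$, which your procedure cannot reach.

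\textbf{Connectedness fails for the same reason.} Maximal elements of $\ps_\mu(\Gamma)$ are \emph{not} in bijection with subsets $\E$ whose complement is a spanning forest: in the same example there are four such $\E$ (the three two-element subsets and $E(\Gamma)$ itself), but only $\E=E(\Gamma)$ supports a polystable divisor. The basis-exchange argument for the cographic matroid therefore does not apply to the poset $\ps_\mu(\Gamma)$.

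\textbf{How the paper proceeds.} The paper avoids all of these difficulties by passing through two auxiliary structures you do not use. For rankedness, it invokes Proposition~\ref{prop:face2}: maximal chains in $\ps_\mu(\Gamma)$ ending at $(\E,D)$ correspond to maximal chains of faces of the polytope $P_{\E,D}(X)$, and these all have length $\dim P_{\E,D}(X)=\rk(\E,D)$. For the existence of a rank-$b_1(\Gamma)$ element above any given polystable, and for connectedness in codimension~$1$, the paper uses Proposition~\ref{prop:polyquasi} to transfer the problem to the quasistable poset $\qs_{v_0,\mu}(\Gamma)$, where both facts are already known from \cite[Proposition~4.13]{AP1}, and then pushes the resulting chains forward along the order-preserving section $\pol$. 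Your attempt to argue directly on $\ps_\mu(\Gamma)$ by single-edge moves cannot succeed without a substitute for this detour, because the set of $\E$'s supporting polystable divisors is not governed by matroid combinatorics alone.
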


 \begin{proof}
 We begin noting that the maximal elements $(\E,D)\in \ps_\mu(\Gamma)$ are the ones that satisfy $\rk(\E,D)=b_1(\Gamma)$ (recall that $\rk(\E,D)\le b_1(\Gamma)$).\par
   By Lemma \ref{lem:rkstrict}, if $\rk(\E,D)=b_1(\Gamma)$, then $(\E,D)$ is maximal. Conversely, if $(\E,D)$ is a $\mu$-polystable pseudo-divisor, then, by Proposition \ref{prop:polyquasi}, there is a $(v_0,\mu)$-quasistable pseudo-divisor $(\E',D')$ such that $\pol(\E',D')=(\E,D)$. By \cite[Proposition]{AP1} there exists a $(v_0,\mu)$-quasistable pseudo-divisor $(\E_0,D_0)$ with $\rk(\E_0,D_0)=b_1(\Gamma)$ and $(\E_0,D_0)\geq (\E',D')$. However, this means that $\pol(\E_0,D_0)\geq (\E_0,D_0)\geq (\E',D')$, hence $\pol(\E_0,D_0)\geq\pol(\E',D')= (\E,D)$. But $\rk(\pol(\E_0,D_0)\geq \rk(\E_0,D_0)=b_1(\Gamma)$, which means that $\rk(\pol(\E_0,D_0))=b_1(\Gamma)$. Therefore, every $\mu$-polystable divisor $(\E,D)$ is less or equal than a $\mu$-polystable divisor with rank $b_1(\Gamma)$. \par
   Now, every maximal chain in $\ps_\mu(\Gamma)$ ends in a maximal element $(\E,D)$. By Proposition \ref{prop:face2}, the maximal chains ending in $(\E,D)$ correspond precisely the maximal chains of faces of $P_{\E,D}(X)$, which all have length $\dim P_{\E,D}(X)=\rk(\E,D)=b_1(\Gamma)$. This proves that $\ps_{\mu}(\Gamma)$ is ranked of dimension $b_1(\Gamma)$. \par
 
   We now prove that $\ps_{\mu}(\Gamma)$ is connected in codimension $1$. Let $(\E_1,D_1)$ and $(\E_2,D_2)$ be two $\mu$-polystable pseudo-divisors on $\Gamma$ with rank $b_1(\Gamma)$. By Proposition \ref{prop:polyquasi}, we can consider $(v_0,\mu)$-quasistable divisors $(\E_1',D_1')$ and $(\E_2',D_2')$ on $\Gamma$ such that $\pol(\E_i',D_i')=(\E_i,D_i)$ and $\rk(\E_i',D_i')=g$, for $i=1,2$.
   We know by \cite[Proposition 4.13]{AP1} that $\qs_{v_0,\mu}(\Gamma)$ is connected in codimension $1$. Hence there exists a path in codimension $1$ in  $\qs_{v_0,\mu}(\Gamma)$ connecting $(\E_1',D_1')$ with $(\E_2',D_2')$. Applying the map  $\pol$ to the whole sequence, and recalling Equation \eqref{eq:rkm} and the fact that $\pol$ is order-preserving, we get a sequence in codimension $1$ in $\ps_\mu(\Gamma)$ connecting $(\E_1,D_1)$ with $(\E_2,D_2)$.
 \end{proof}
 
 \begin{Thm}\label{thm:rankuniv}
 The poset $\ps_{\mu,g}$ is ranked of dimension $4g-3$ and connected in codimension $1$.
 \end{Thm}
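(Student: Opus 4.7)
Define the rank function $r(\Gamma, \E, D) := |E(\Gamma)| + \rk(\E, D)$, which equals $\dim \sigma_{(\Gamma, \E, D)}$ by the calculations of Section 4. The stability inequality $|E(\Gamma)| \leq 3g - 3 - \sum_v w_\Gamma(v)$, combined with $\rk(\E, D) \leq b_1(\Gamma) = g - \sum_v w_\Gamma(v)$, gives $r \leq 4g - 3$, with equality iff $\Gamma$ is trivalent unweighted and $\rk(\E, D) = g$. The minimum rank $r = 0$ is achieved by the single-vertex graph of weight $g$ with $(\E, D) = (\emptyset, D)$. The plan is to show (i) every element lies below a maximum of rank $4g - 3$, and (ii) every covering in $\ps_{\mu, g}$ changes $r$ by exactly $1$; together these imply that all maximal chains have length $4g - 3$.

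For (i), fix $(\Gamma, \E, D)$. First apply Proposition \ref{prop:ranked} to enlarge $(\E, D)$ within $\ps_\mu(\Gamma)$ until $\rk(\E, D) = b_1(\Gamma)$. If $\Gamma$ is not yet trivalent unweighted, perform one of two graph lifts. Either attach a self-loop at some vertex $v$ with $w_\Gamma(v) > 0$ and decrease $w(v)$ by $1$, keeping $(\E, D)$ unchanged; using that $\mu_{\Gamma'}(v) = \mu_\Gamma(v)$ for the canonical polarization and that a loop contributes nothing to $E(V, V^c)$, polystability is preserved. Or split a vertex of valence $\ge 4$ into two vertices joined by a new edge, distributing incident edges so that both sides are stable and choosing $D'(v_1) + D'(v_2) = D(v)$ to preserve polystability against the new subsets $V$ separating $v_1$ from $v_2$. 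Iterating, we reach a trivalent unweighted $\Gamma_0$ carrying a polystable pseudo-divisor of rank $g$, so $r = 4g - 3$.

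For (ii), intra-$\Gamma$ coverings change $r$ by $1$ by Proposition \ref{prop:ranked}. For a single-edge graph specialization $\iota\col \Gamma_1 \to \Gamma_2$ contracting $e$: if $e \notin \E_1$, or $e \in \E_1$ with $s(e), t(e)$ in different components of $(\Gamma_1)_{\E_1}$, a direct count gives $r$ dropping by $1$. The remaining case, $e \in \E_1$ with $s(e), t(e)$ in the same component of $(\Gamma_1)_{\E_1}$, a priori drops $r$ by $2$; we refine it through the intermediate $(\Gamma_1, \E_1 \setminus \{e\}, D^s)$, obtained by contracting $v_e$ onto $s(e)$. A short $\beta$-computation using Equation \eqref{eq:betaee} gives $\beta_{\E_1 \setminus \{e\}, D^s}(V) \ge \beta_{\E_1, D_1}(V) \ge 0$, with equality unless $s(e) \notin V$ and $t(e) \in V$. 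Crucially, for any $V$ with $s(e) \in V$ and $t(e) \notin V$, the path from $s(e)$ to $t(e)$ in $(\Gamma_1)_{\E_1}$ must cross $V \leftrightarrow V^c$, so $E(V, V^c) \not\subset \E_1$; polystability of $(\E_1, D_1)$ then forces $\beta_{\E_1, D_1}(V) > 0$ strictly, and the intermediate is polystable.

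For connectedness in codimension $1$, combine Proposition \ref{prop:ranked} applied fiber-wise with the classical codim-$1$ connectedness of the subposet of trivalent unweighted stable graphs of genus $g$ (a well-known feature of $M_g^\trop$). Given two maximal $(\Gamma_1, \E_1, D_1), (\Gamma_2, \E_2, D_2) \in \ps_{\mu,g}$, link $\Gamma_1$ to $\Gamma_2$ through a sequence of single-edge contractions and re-expansions between trivalent unweighted graphs, lifting each contracted graph to a codim-$1$ element of $\ps_{\mu,g}$ via polystable specialization, then move within each fiber using Proposition \ref{prop:ranked}. We expect the main technical hurdle to be the vertex-splitting lift in (i): choosing the edge partition and the divisor distribution $D'(v_1), D'(v_2)$ so as to preserve polystability requires explicit analysis of all new subsets $V \subset V(\Gamma')$ separating $v_1$ from $v_2$.
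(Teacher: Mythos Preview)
Your overall architecture---define $r(\Gamma,\E,D)=|E(\Gamma)|+\rk(\E,D)$, show it is a rank function by checking maxima, minima, and coverings, then combine fiberwise connectedness with codimension-$1$ connectedness of the graph poset---is exactly the strategy behind the paper's proof, which simply defers to \cite[Theorem~4.15]{AP1} together with Proposition~\ref{prop:ranked}. Your analysis in~(ii) of how $r$ behaves under a single-edge contraction is correct, including the construction of the intermediate $(\Gamma_1,\E_1\setminus\{e\},D^s)$ in the ``same component'' case; you should also note explicitly that any covering with $\Gamma_1\ne\Gamma_2$ reduces to a single-edge contraction with pushforward divisor, since intermediate contractions yield polystable triples by Lemma~\ref{lem:polyspec}.

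The genuine gap is the one you flag yourself: the vertex-splitting lift in~(i). Producing a partition of the edges at $v$ and integers $D'(v_1),D'(v_2)$ so that \emph{every} new subset $V'$ separating $v_1$ from $v_2$ satisfies the strict inequality $\beta_{\E',D'}(V')>0$ (unless $E(V',(V')^c)\subset\E'$) is not automatic, and you give no mechanism for it. The paper sidesteps this entirely. Rather than splitting directly in the polystable world, one passes through quasistable divisors using the map $\pol$ and Proposition~\ref{prop:polyquasi}, exactly as in the proof of Proposition~\ref{prop:ranked}: choose a $(v_0,\mu)$-quasistable $(\E'',D'')$ with $\pol(\E'',D'')=(\E,D)$ and the same rank, invoke \cite[Theorem~4.15]{AP1} to lift $(\E'',D'')$ to a maximal quasistable triple on a trivalent unweighted graph, then apply $\pol$ to the result. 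Since $\pol$ is order-preserving and pushforward preserves polystability (Lemma~\ref{lem:polyspec}), the lifted polystable triple lies above $(\Gamma,\E,D)$ by Proposition~\ref{prop:minimal}. This reduces your hurdle to the quasistable case, where the vertex-splitting is already handled in \cite{AP1}; I would recommend rewriting~(i) along these lines rather than attempting the direct polystable split.
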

 \begin{proof}
 The proof is essentially the same as in \cite[Theorem 4.15]{AP1}, combining Proposition \ref{prop:ranked} with the same results for the poset of genus-$g$ stable weighted graphs in \cite[Theorem 3.2.5]{BMV} and \cite[Fact 4.12]{Caporaso}.
 \end{proof}

\begin{Def}
Let $X$ be a tropical curve with a polarization $\mu$ and $\mu$-model $\Gamma$.
The \emph{$\mu$-polystable Jacobian} of $X$ is the polyhedral complex
\[
P^{\trop}_\mu(X)=\lim_{\longrightarrow} P_{\E,D}(X)
\]
where the limit is taken over the poset $\PS_\mu(\Gamma)$. In particular, we have
\[
P^{\trop}_\mu(X)=\coprod_{(\E,D)\in \PS_\mu(\Gamma)} P^\circ_{\E,D}(X).
\]
\end{Def}

We have the following theorem giving a polyhedral decomposition of the tropical Jacobian (see also \cite[Proposition 5.8]{CPS}).

\begin{Thm}\label{thm:decomposition}
Given a tropical curve $X$, there is a homeomorphism $P^{\trop}_\mu(X)\to J(X)$.
\end{Thm}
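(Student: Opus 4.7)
My plan is to construct a continuous map $\Phi\colon P^{\trop}_\mu(X)\to J(X)$ using the Abel maps, and then to verify that it is a bijection from which the homeomorphism assertion follows for free from compactness. More precisely, for each $\mu$-polystable pseudo-divisor $(\E,D)$ on $\Gamma$, the Abel map of \eqref{eq:Abelmap} induces, via Proposition \ref{prop:parameter}, a continuous map $P_{\E,D}(X)\to J(X)$ (see \eqref{eq:Pcont}). I would first observe that these maps are compatible with the natural inclusions $P_{\E,D}(X)\subset P_{\E',D'}(X)$ for $(\E',D')\geq (\E,D)$, simply because the Abel map is linear on $K_{\E',D'}(X)$ and restricts to the Abel map on the face $K_{\E,D}(X)$. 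Consequently, by the universal property of the colimit, they glue to a continuous map $\Phi\colon P^{\trop}_\mu(X)\to J(X)$.

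Next I would verify that $\Phi$ is a bijection. For surjectivity: any class in $J(X)\cong\Pic^0(X)$, shifted by $d\,p_0$, is represented by some degree-$d$ divisor on $X$, and by Theorem \ref{thm:poly}(1) it admits a $\mu$-polystable representative $\D$. This $\D$ is a unitary divisor whose combinatorial type $(\E,D)$ is $\mu$-polystable, so $\D$ corresponds to a point in $P^\circ_{\E,D}(X)\subset P^{\trop}_\mu(X)$ mapping to the given class. For injectivity: if two points of $P^{\trop}_\mu(X)$ map to the same element of $J(X)$, they correspond to equivalent $\mu$-polystable unitary divisors $\D_1$ and $\D_2$. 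Theorem \ref{thm:poly}(2) tells us that $\D_1$ and $\D_2$ share the same combinatorial type $(\E,D)$, so both points lie in the same stratum $P^\circ_{\E,D}(X)$; and Proposition \ref{prop:parameter} then identifies them with the same point of $P^\circ_{\E,D}(X)$. This uses crucially the fact that $P^{\trop}_\mu(X)$ is the disjoint union of the open strata $P^\circ_{\E,D}(X)$, so no ambiguity about which stratum the class lands in arises.

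Finally, to upgrade the continuous bijection $\Phi$ to a homeomorphism, I would invoke the standard fact that a continuous bijection from a compact space to a Hausdorff space is a homeomorphism. Compactness of $P^{\trop}_\mu(X)$ follows because, by Proposition \ref{prop:ranked}, the poset $\ps_\mu(\Gamma)$ is finite (being ranked of finite dimension $b_1(\Gamma)$ with only finitely many pseudo-divisors at each rank), and each $P_{\E,D}(X)$ is a compact polytope, so the colimit is compact. The Jacobian $J(X)$ is a real torus, hence Hausdorff. This completes the argument.

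The main subtlety I expect is the gluing/compatibility check underlying the construction of $\Phi$. The inclusion \eqref{eq:PED} is not, in general, a face inclusion for arbitrary pseudo-divisors; it is Proposition \ref{prop:face2} that guarantees face behavior precisely on the polystable locus, which is why polystability is the correct notion for assembling the decomposition. Once this is in place, surjectivity and injectivity reduce cleanly to the two parts of Theorem \ref{thm:poly}, which together encode exactly the statement that $\mu$-polystable divisors form a set of canonical representatives for the linear equivalence relation.
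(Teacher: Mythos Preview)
Your proposal is correct and follows essentially the same approach as the paper: define the map via the Abel maps $P_{\E,D}(X)\to J(X)$ of \eqref{eq:Pcont}, use Theorem \ref{thm:poly} for bijectivity, and conclude via the compact-to-Hausdorff argument. The paper's proof is terser but identical in structure; your added discussion of compatibility under the inclusions \eqref{eq:PED} and of why $P^{\trop}_\mu(X)$ is compact simply fills in details the paper leaves implicit.
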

\begin{proof}
Since $P^{\trop}_\mu(X)$ is compact and $J(X)$ is Hausdorff it is sufficient to construct a continuous bijective map $\alpha\col P^{\trop}_\mu(X)\to J(X)$. Fix a point $p_0\in X$, and let $\alpha$ be the map taking a $\mu$-polystable divisor $\D$ on $\Gamma$ to the class of the divisor $\D-dp_0$. It is a bijection by Theorem \ref{thm:poly}. It is continuous, because each map  $\alpha|_{P_{\E,D}(X)}\col P_{\E,D}(X)\to J(X)$ is equal to the continuous map of Equation \eqref{eq:Pcont}.
\end{proof}

Recall that, given a tropical curve $X$ and a point $p_0\in X$, we define the Jacobian of $X$ with respect to $(p_0,\mu)$ as the polyhedral complex:
\begin{equation}\label{eq:quasiJacobian}
J^{\trop}_{p_0,\mu}(X)=\underset{\longrightarrow}{\lim} K_{\E,D}(X),
\end{equation}
where $(\E,D)$ runs through all $(p_0,\mu)$-quasistable divisors (see \cite[Definition 5.7]{AP1}).\par
 
 We have the following proposition, see also \cite[Corollary 5.10]{CPS}.

\begin{Prop}\label{prop:refinement}
Let $X$ be a tropical curve and $p_0$ be a point of $X$. We have a refinement map of polyhedral complexes $J^{\trop}_{p_0,\mu}(X)\to P^{\trop}_\mu(X)$.
\end{Prop}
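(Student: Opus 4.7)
The plan is to construct the refinement map cell-by-cell using the operator $\pol$. For each $(p_0,\mu)$-quasistable pseudo-divisor $(\E',D')$ on the $\mu$-model $\Gamma$ of $X$, I would set $(\E,D):=\pol(\E',D')$, the unique minimal $\mu$-polystable pseudo-divisor with $(\E,D)\geq(\E',D')$ given by Proposition \ref{prop:minimal}. Since $\E'\subset \E$, there is a natural face inclusion $K_{\E',D'}(X)\hookrightarrow K_{\E,D}(X)$, and composing with the quotient $T_\E\colon \R^\E\to \R^\E/L_\E$ produces a continuous affine map $\psi_{\E',D'}\colon K_{\E',D'}(X)\to P_{\E,D}(X)\subset P^\trop_\mu(X)$.

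Next I would check that these cell maps glue into a well-defined continuous map $\psi\colon J^\trop_{p_0,\mu}(X)\to P^\trop_\mu(X)$. If $(\E'_1,D'_1)\geq(\E'_2,D'_2)$ are two quasistable pseudo-divisors, then $\pol(\E'_1,D'_1)\geq\pol(\E'_2,D'_2)$ because $\pol$ is order-preserving, and the corresponding inclusion $P_{\pol(\E'_2,D'_2)}\subset P_{\pol(\E'_1,D'_1)}$ is a face inclusion by Proposition \ref{prop:face2}. Hence the face inclusion $K_{\E'_2,D'_2}\subset K_{\E'_1,D'_1}$ is compatible with the two cell maps. To see that $\psi$ is a homeomorphism, I would compare it with the Abel-Jacobi homeomorphisms provided by (the quasistable analogue of) Theorem \ref{thm:decomposition} and by Theorem \ref{thm:decomposition} itself, observing that on each quasistable cell the map $\psi_{\E',D'}$ coincides with the composite $K_{\E',D'}(X)\to J(X)\xleftarrow{\sim} P^\trop_\mu(X)$ by construction of the Abel maps in Equations \eqref{eq:Abelmap} and \eqref{eq:Pcont}.

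Finally, to obtain the refinement property I need to verify that each open cell $K^\circ_{\E',D'}(X)$ is sent into the open cell $P^\circ_{\pol(\E',D')}(X)$. This is the main obstacle: while $\psi_{\E',D'}$ manifestly lands in $P_{\pol(\E',D')}(X)$, one must rule out that it hits a proper face. I would argue this by tracing the iterative construction in the proof of Theorem \ref{thm:poly}(1): starting from a quasistable $\D\in K^\circ_{\E',D'}(X)$, each step replaces $\D$ by an equivalent divisor whose combinatorial type advances along the sequence \eqref{eq:seqsemi}, and the iteration terminates precisely at combinatorial type $\pol(\E',D')$. Together with Proposition \ref{prop:semipolyint}, which shows at the level of polytopes that $P^\circ_{\E',D'}(X)\subset P^\circ_{\pol(\E',D')}(X)$, this gives that each open quasistable cell is affinely embedded in a single open polystable cell. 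Injectivity of $\psi$ on each open cell then follows from Proposition \ref{prop:parameter} together with Theorem \ref{thm:poly}, and since $\psi$ is a bijective homeomorphism between the underlying spaces that carries each cell of the source into a cell of the target via an affine embedding, it is a refinement map.
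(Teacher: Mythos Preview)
Your approach is correct and follows the same underlying idea as the paper (send each quasistable cell into the polystable cell indexed by $\pol$), but you are working much harder than necessary because you have missed the one-line observation that drives the paper's proof: every $(p_0,\mu)$-quasistable pseudo-divisor $(\E',D')$ is \emph{simple} (by \cite[Proposition 4.6]{AP1}), so $L_{\E'}=0$ and hence $K_{\E',D'}(X)=P_{\E',D'}(X)$ by Equation \eqref{eq:Ksimple}. Once you know this, your map $\psi_{\E',D'}$ is literally the inclusion $P_{\E',D'}(X)\hookrightarrow P_{\pol(\E',D')}(X)$ of Equation \eqref{eq:PED}, injectivity on cells is automatic, and Proposition \ref{prop:semipolyint} immediately gives $P^\circ_{\E',D'}\subset P^\circ_{\pol(\E',D')}$; the whole proof is three lines. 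Your detour through the Abel--Jacobi homeomorphisms and Theorem \ref{thm:poly} is unnecessary, and in fact your appeal to Theorem \ref{thm:poly} for injectivity on cells is slightly misplaced, since that theorem concerns polystable (not quasistable) divisors; the correct reason is simply that $T_{\E'}$ is the identity when $\E'$ is non-disconnecting.
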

\begin{proof}
 A $(p_0,\mu)$-quasistable divisor $(\E,D)$ is simple by \cite[Proposition 4.6]{AP1}. Thus Equation \eqref{eq:Ksimple} implies that $K_{\E,D}(X)=P_{\E,D}(X)$. By Proposition \ref{prop:minimal} and Equation \eqref{eq:PED}, we have inclusions $P_{\E,D}(X)\subset P_{\pol(\E,D)}(X)$. Hence we obtain a refinement map $J^{\trop}_{p_0,\mu}(X)\to P^{\trop}_\mu(X)$ of polyhedral complexes.
\end{proof}

\begin{Exa}
Consider the tropical curve of Example \ref{exa:KE} and the polarization $\mu$ of Example \ref{exa:polyposet}. In Figure \ref{fig:polyjac} we draw a picture of the Jacobian $P_\mu^{\trop}(X)$ with its natural  polyhedral decomposition. This is a hexagon (with suitable identifications). One can check that $J^{\trop}_{p_0,\mu}(X)$ is as in  \cite[Figure 4]{AP1}, and it is clear that we have a refinement map $J^{\trop}_{p_0,\mu}(X)\ra P_\mu^{\trop}(X)$. 
\begin{figure}[ht]
\begin{tikzpicture}[scale=2.5]
%\draw[ultra thick] (0,0) rectangle (1,1);
%\draw[ultra thick] (0,0) -- (0,1) -- (-1,0) -- (-1,-1) -- (0,0);
\draw[ultra thick]  (0,1) -- (-1,0) -- (-1,-1) -- (0,-1) -- (1,0) -- (1,1) -- (0,1);
%\draw[ultra thick] (0,0) -- (1,0) -- (0,-1) -- (-1,-1) -- (0,0);
%\draw[fill] (0,0) circle [radius=0.03];
\draw[fill] (0,1) circle [radius=0.03];
\draw[fill] (1,0) circle [radius=0.03];
\draw[fill] (1,1) circle [radius=0.03];
\draw[fill] (0,-1) circle [radius=0.03];
\draw[fill] (-1,0) circle [radius=0.03];
\draw[fill] (-1,-1) circle [radius=0.03];
\begin{scope}[shift={(-0.3,0)},scale=0.8]
\draw (0,0) to [out=87, in=88] (1,0);
%\draw (0,0) to [out=45, in=135] (1,0);
\draw (0,0) to (1,0);
\draw (0,0) to [out=-87, in=-88] (1,0);
\draw[fill] (0,0) circle [radius=0.02];
\draw[fill] (1,0) circle [radius=0.02];
\draw[fill] (0.5,0.29) circle [radius=0.02];
\draw[fill] (0.5,0) circle [radius=0.02];
\draw[fill] (0.5,-0.29) circle [radius=0.02];
\node[left] at (0,0) {1};
\node[right] at (0.95,0) {1};
\node[above] at (0.6,0.23) {-1};
\node[above] at (0.6,-0.062) {-1};
\node[below] at (0.6,-0.07) {-1};
\end{scope}
\begin{scope}[shift={(-0.7,-1.3)},scale=0.5]
\draw (0,0) to [out=87, in=88] (1,0);
%\draw (0,0) to [out=45, in=135] (1,0);
\draw (0,0) to (1,0);
\draw (0,0) to [out=-87, in=-88] (1,0);
\draw[fill] (0,0) circle [radius=0.02];
\draw[fill] (1,0) circle [radius=0.02];
%\draw[fill] (0.5,0.29) circle [radius=0.02];
%\draw[fill] (0.5,0) circle [radius=0.02];
\draw[fill] (0.5,-0.29) circle [radius=0.02];
\node[left] at (0.05,0) {0};
\node[right] at (0.95,0) {0};
%\node[above] at (0.6,0.23) {-1};
%\node[above] at (0.6,-0.06) {-1};
\node[below] at (0.6,0.05) {-1};
\end{scope}
\begin{scope}[shift={(-1.1,0.6)},scale=0.5]
\draw (0,0) to [out=87, in=88] (1,0);
%\draw (0,0) to [out=45, in=135] (1,0);
\draw (0,0) to (1,0);
\draw (0,0) to [out=-87, in=-88] (1,0);
\draw[fill] (0,0) circle [radius=0.02];
\draw[fill] (1,0) circle [radius=0.02];
%\draw[fill] (0.5,0.29) circle [radius=0.02];
\draw[fill] (0.5,0) circle [radius=0.02];
%\draw[fill] (0.5,-0.29) circle [radius=0.02];
\node[left] at (0.05,0) {0};
\node[right] at (0.95,0) {0};
%\node[above] at (0.6,0.23) {-1};
\node[above] at (0.6,-0.06) {-1};
%\node[below] at (0.6,-0.23) {-1};
\end{scope}
\begin{scope}[shift={(1.11,0.59)},scale=0.5]
\draw (0,0) to [out=87, in=88] (1,0);
%\draw (0,0) to [out=45, in=135] (1,0);
\draw (0,0) to (1,0);
\draw (0,0) to [out=-87, in=-88] (1,0);
\draw[fill] (0,0) circle [radius=0.02];
\draw[fill] (1,0) circle [radius=0.02];
\draw[fill] (0.5,0.29) circle [radius=0.02];
%\draw[fill] (0.5,0) circle [radius=0.02];
%\draw[fill] (0.5,-0.29) circle [radius=0.02];
\node[left] at (0.05,0) {0};
\node[right] at (0.95,0) {0};
\node[above] at (0.6,0.23) {-1};
%\node[above] at (0.6,-0.06) {-1};
%\node[below] at (0.6,-0.23) {-1};
\end{scope}
\begin{scope}[shift={(0.25,1.2)},scale=0.5]
\draw (0,0) to [out=87, in=88] (1,0);
%\draw (0,0) to [out=45, in=135] (1,0);
\draw (0,0) to (1,0);
\draw (0,0) to [out=-87, in=-88] (1,0);
\draw[fill] (0,0) circle [radius=0.02];
\draw[fill] (1,0) circle [radius=0.02];
%\draw[fill] (0.5,0.29) circle [radius=0.02];
%\draw[fill] (0.5,0) circle [radius=0.02];
\draw[fill] (0.5,-0.29) circle [radius=0.02];
\node[left] at (0,0) {0};
\node[right] at (0.95,0) {0};
%\node[above] at (0.6,0.23) {-1};
%\node[above] at (0.6,-0.06) {-1};
\node[below] at (0.6,0.05) {-1};
\end{scope}
\begin{scope}[shift={(-1.64,-0.5)},scale=0.5]
\draw (0,0) to [out=87, in=88] (1,0);
%\draw (0,0) to [out=45, in=135] (1,0);
\draw (0,0) to (1,0);
\draw (0,0) to [out=-87, in=-88] (1,0);
\draw[fill] (0,0) circle [radius=0.02];
\draw[fill] (1,0) circle [radius=0.02];
\draw[fill] (0.5,0.29) circle [radius=0.02];
%\draw[fill] (0.5,0) circle [radius=0.02];
%\draw[fill] (0.5,-0.29) circle [radius=0.02];
\node[left] at (0,0) {0};
\node[right] at (0.95,0) {0};
\node[above] at (0.6,0.23) {-1};
%\node[above] at (0.6,-0.06) {-1};
%\node[below] at (0.6,-0.23) {-1};
\end{scope}
\begin{scope}[shift={(0.64,-0.6)},scale=0.5]
\draw (0,0) to [out=87, in=88] (1,0);
%\draw (0,0) to [out=45, in=135] (1,0);
\draw (0,0) to (1,0);
\draw (0,0) to [out=-87, in=-88] (1,0);
\draw[fill] (0,0) circle [radius=0.02];
\draw[fill] (1,0) circle [radius=0.02];
%\draw[fill] (0.5,0.29) circle [radius=0.02];
\draw[fill] (0.5,0) circle [radius=0.02];
%\draw[fill] (0.5,-0.29) circle [radius=0.02];
\node[left] at (0,0) {0};
\node[right] at (0.95,0) {0};
%\node[above] at (0.6,0.23) {-1};
\node[above] at (0.6,-0.06) {-1};
%\node[below] at (0.6,-0.23) {-1};
\end{scope}
\begin{scope}[shift={(-1.5,0)},scale=0.3]
\draw (0,0) to [out=87, in=88] (1,0);
%\draw (0,0) to [out=45, in=135] (1,0);
\draw (0,0) to (1,0);
\draw (0,0) to [out=-87, in=-88] (1,0);
\draw[fill] (0,0) circle [radius=0.02];
\draw[fill] (1,0) circle [radius=0.02];
%\draw[fill] (0.5,0.29) circle [radius=0.02];
%\draw[fill] (0.5,0) circle [radius=0.02];
%\draw[fill] (0.5,-0.29) circle [radius=0.02];
\node[left] at (0,0) {0};
\node[right] at (0.95,0) {-1};
%\node[above] at (0.6,0.23) {-1};
%\node[above] at (0.6,-0.06) {-1};
%\node[below] at (0.6,-0.23) {-1};
\end{scope}
\begin{scope}[shift={(1.2,0)},scale=0.3]
\draw (0,0) to [out=87, in=88] (1,0);
%\draw (0,0) to [out=45, in=135] (1,0);
\draw (0,0) to (1,0);
\draw (0,0) to [out=-87, in=-88] (1,0);
\draw[fill] (0,0) circle [radius=0.02];
\draw[fill] (1,0) circle [radius=0.02];
%\draw[fill] (0.5,0.29) circle [radius=0.02];
%\draw[fill] (0.5,0) circle [radius=0.02];
%\draw[fill] (0.5,-0.29) circle [radius=0.02];
\node[left] at (0.1,0) {-1};
\node[right] at (0.95,0) {0};
%\node[above] at (0.6,0.23) {-1};
%\node[above] at (0.6,-0.06) {-1};
%\node[below] at (0.6,-0.23) {-1};
\end{scope}
\begin{scope}[shift={(0.1,-1.1)},scale=0.3]
\draw (0,0) to [out=87, in=88] (1,0);
%\draw (0,0) to [out=45, in=135] (1,0);
\draw (0,0) to (1,0);
\draw (0,0) to [out=-87, in=-88] (1,0);
\draw[fill] (0,0) circle [radius=0.02];
\draw[fill] (1,0) circle [radius=0.02];
%\draw[fill] (0.5,0.29) circle [radius=0.02];
%\draw[fill] (0.5,0) circle [radius=0.02];
%\draw[fill] (0.5,-0.29) circle [radius=0.02];
\node[left] at (0.05,0) {0};
\node[right] at (0.95,0) {-1};
%\node[above] at (0.6,0.23) {-1};
%\node[above] at (0.6,-0.06) {-1};
%\node[below] at (0.6,-0.23) {-1};
\end{scope}
\begin{scope}[shift={(-1.4,-1.1)},scale=0.3]
\draw (0,0) to [out=87, in=88] (1,0);
%\draw (0,0) to [out=45, in=135] (1,0);
\draw (0,0) to (1,0);
\draw (0,0) to [out=-87, in=-88] (1,0);
\draw[fill] (0,0) circle [radius=0.02];
\draw[fill] (1,0) circle [radius=0.02];
%\draw[fill] (0.5,0.29) circle [radius=0.02];
%\draw[fill] (0.5,0) circle [radius=0.02];
%\draw[fill] (0.5,-0.29) circle [radius=0.02];
\node[left] at (0,0) {-1};
\node[right] at (0.95,0) {0};
%\node[above] at (0.6,0.23) {-1};
%\node[above] at (0.6,-0.06) {-1};
%\node[below] at (0.6,-0.23) {-1};
\end{scope}
\begin{scope}[shift={(-0.4,1.1)},scale=0.3]
\draw (0,0) to [out=87, in=88] (1,0);
%\draw (0,0) to [out=45, in=135] (1,0);
\draw (0,0) to (1,0);
\draw (0,0) to [out=-87, in=-88] (1,0);
\draw[fill] (0,0) circle [radius=0.02];
\draw[fill] (1,0) circle [radius=0.02];
%\draw[fill] (0.5,0.29) circle [radius=0.02];
%\draw[fill] (0.5,0) circle [radius=0.02];
%\draw[fill] (0.5,-0.29) circle [radius=0.02];
\node[left] at (0,0) {-1};
\node[right] at (0.94,0) {0};
%\node[above] at (0.6,0.23) {-1};
%\node[above] at (0.6,-0.06) {-1};
%\node[below] at (0.6,-0.23) {-1};
\end{scope}
\begin{scope}[shift={(1,1.1)},scale=0.3]
\draw (0,0) to [out=87, in=88] (1,0);
%\draw (0,0) to [out=45, in=135] (1,0);
\draw (0,0) to (1,0);
\draw (0,0) to [out=-87, in=-88] (1,0);
\draw[fill] (0,0) circle [radius=0.02];
\draw[fill] (1,0) circle [radius=0.02];
%\draw[fill] (0.5,0.29) circle [radius=0.02];
%\draw[fill] (0.5,0) circle [radius=0.02];
%\draw[fill] (0.5,-0.29) circle [radius=0.02];
\node[left] at (0,0) {0};
\node[right] at (0.95,0) {-1};
%\node[above] at (0.6,0.23) {-1};
%\node[above] at (0.6,-0.06) {-1};
%\node[below] at (0.6,-0.23) {-1};
\end{scope}
\end{tikzpicture}
\caption{The Jacobian $P_{\mu}^{\trop}(X)$.}
\label{fig:polyjac}
\end{figure}
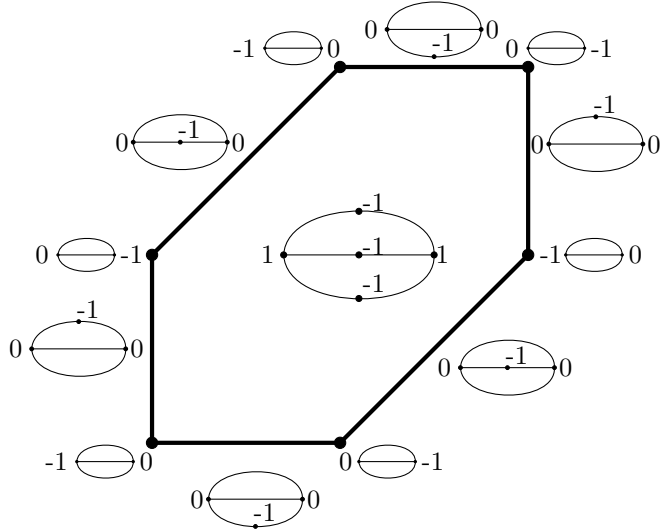\end{Exa}

\subsection{A universal tropical Jacobian} 

We are in a position to introduce a universal tropical Jacobian over $M_g^{\trop}$. The idea is to glue together all the cones $\sigma_{(\Gamma,\E,D)}$, for all graphs $\Gamma$ of genus $g$ and all $\mu$-polystable pseudo-divisors $(\E,D)$ on $\Gamma$. First of all, we have the following analogue of Propositions \ref{prop:semipolyint} and \ref{prop:face2} for the cones $\sigma_{(\Gamma,\E,D)}$. 

\begin{Prop}
\label{prop:face3}
Let $\Gamma$ be a graph and $(\E,D)$ be a $\mu$-semistable pseudo-divisor on $\Gamma$. The following properties hold.
\begin{enumerate}
    \item If we let $(\E',D')=\pol(\E,D)$, then 
$\sigma_{(\Gamma,\E,D)}^\circ\subset \sigma_{(\Gamma,\E',D')}^\circ$.
\item 
Given a $\mu$-polystable pseudo-divisor $(\E',D')$ such that $(\E',D')\geq(\E,D)$, we have that $\sigma_{(\Gamma,\E,D)}$ is a face of $\sigma_{(\Gamma,\E',D')}$ if and only if $(\E,D)$ is $\mu$-polystable.
\end{enumerate}

\end{Prop}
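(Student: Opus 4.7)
The plan is to deduce Proposition~\ref{prop:face3} from the analogous statements for the polytopes $P_{\E,D}(X)$, namely Propositions~\ref{prop:semipolyint} and~\ref{prop:face2}, by passing to fibers of the projection $\pi_\E$. By Proposition~\ref{prop:parameter1}, the fiber of $\pi_\E$ (respectively $\pi_{\E'}$) over a tropical curve $X\in\R^{E(\Gamma)}_{>0}$ is $P_{\E,D}(X)$ (respectively $P_{\E',D'}(X)$), and the natural inclusion $\sigma_{(\Gamma,\E,D)}\subset\sigma_{(\Gamma,\E',D')}$ of Equation~\eqref{eq:sigmaED} restricts on each such fiber to the inclusion $P_{\E,D}(X)\subset P_{\E',D'}(X)$ from Equation~\eqref{eq:PED}.

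For part (1), the open cone $\sigma_{(\Gamma,\E,D)}^\circ$ parametrizes pairs $(X,\D)$ with $X\in\R^{E(\Gamma)}_{>0}$ and divisor $\D$ represented by a point of $P^\circ_{\E,D}(X)$, again by Proposition~\ref{prop:parameter1}. Applying Proposition~\ref{prop:semipolyint} fiber-wise yields $P^\circ_{\E,D}(X)\subset P^\circ_{\E',D'}(X)$ for every such $X$, and assembling over $X$ gives $\sigma_{(\Gamma,\E,D)}^\circ\subset\sigma_{(\Gamma,\E',D')}^\circ$.

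For the ``only if'' direction of part (2), suppose $\sigma_{(\Gamma,\E,D)}$ is a face of $\sigma_{(\Gamma,\E',D')}$. Intersecting a face of a convex cone with an affine slice produces a face of that slice, so for $X\in\R^{E(\Gamma)}_{>0}$ the intersection $\sigma_{(\Gamma,\E,D)}\cap\pi_{\E'}^{-1}(X)$, which equals $P_{\E,D}(X)$ by the fiber description, is a face of $P_{\E',D'}(X)$. The second assertion of Proposition~\ref{prop:face2} then forces $(\E,D)$ to be $\mu$-polystable.

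For the ``if'' direction, assume $(\E,D)\leq(\E',D')$ with $(\E,D)$ polystable. By Proposition~\ref{prop:face2}, $P_{\E,D}(X)$ is a face of $P_{\E',D'}(X)$ for every $X\in\R^{E(\Gamma)}_{>0}$. Since $P_{\E',D'}(X)=T_{\E'}(K_{\E',D'}(X))$ and the box $K_{\E',D'}(X)=\prod_{e\in\E'}[0,\ell(e)]$ has vertices that are linear in the edge lengths, the family $\{P_{\E',D'}(X)\}_X$ is a linear family of polytopes with constant combinatorial type. Consequently the normal cone to the face $P_{\E,D}(X)$ inside the dual of $\R^{\E'}/L_{\E'}$ is independent of $X$. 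I would pick any linear functional $\phi$ in the (relative) interior of this common normal cone; then $c(X):=\min\{\phi(y):y\in P_{\E',D'}(X)\}$ is attained precisely on $P_{\E,D}(X)$ and, being $\phi$ evaluated at any vertex of that face, depends linearly on $X$. The locus $H:=\{(X,y):\phi(y)=c(X)\}$ is therefore a linear hyperplane in $\R^{E(\Gamma)}\times\R^{\E'}/L_{\E'}$, which via the isomorphism of Lemma~\ref{lem:VEker} gives a hyperplane in $\R^{E(\Gamma^{\E'})}/\EL_{\E'}$; its intersection with $\sigma_{(\Gamma,\E',D')}$ equals $\sigma_{(\Gamma,\E,D)}$, exhibiting the latter as a face. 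The main obstacle is justifying this uniform choice of supporting hyperplane, i.e.\ the $X$-independence of the normal cone, which I expect to follow cleanly from the piecewise-linear structure of the construction.
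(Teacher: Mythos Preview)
Your approach—reducing to the fiber-wise polytope statements via Proposition~\ref{prop:parameter1} and then invoking Propositions~\ref{prop:semipolyint} and~\ref{prop:face2}—is exactly the paper's, whose proof is the single sentence ``Just use the results in Propositions~\ref{prop:semipolyint} and~\ref{prop:face2} together with the fact that $\pi_\E^{-1}(X)\cong P_{\E,D}(X)$ from Proposition~\ref{prop:parameter1}.'' You are in fact more careful than the paper on the ``if'' direction of~(2), rightly noting that fiber-wise face relations must be upgraded to a single supporting hyperplane for the total cone; your normal-cone-constancy argument is sound, since $P_{\E',D'}(X)$ is the image under the fixed linear map $T_{\E'}$ of a box whose vertices depend linearly on the edge lengths.
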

\begin{proof}
Just use the results in Propositions \ref{prop:semipolyint} and \ref{prop:face2} together with the fact that $\pi_\E^{-1}(X)\cong P_{\E,D}(X)$ from Proposition \ref{prop:parameter1}.
\end{proof}

\begin{Prop}\label{prop:glue}
Let $(\E_1,D_1)$ and $(\E_2,D_2)$ be $\mu$-polystable divisors on the graphs $\Gamma_1$ and $\Gamma_2$.
If $(\Gamma_1,\E_1,D_1)\ge (\Gamma_2,\E_2,D_2)$, then there exists a natural face morphism $\sigma_{(\Gamma_2,\E_2,D_2)}\to\sigma_{(\Gamma_1,\E_1,D_1)}$. Conversely, every face of $\sigma_{(\Gamma_1,\E_1,D_1)}$ arises in this way.
\end{Prop}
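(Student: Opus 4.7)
The plan is to reduce the forward direction to two already-established cases: pure graph specializations, handled by Proposition \ref{prop:face1}, and same-graph specializations between $\mu$-polystable pseudo-divisors, handled by Proposition \ref{prop:face3}(2). Given a morphism $(\Gamma_1,\E_1,D_1) \geq (\Gamma_2,\E_2,D_2)$ in $\PS_{\mu,g}$ with underlying graph specialization $\iota\colon \Gamma_1 \to \Gamma_2$, I factor it as
\[
(\Gamma_1,\E_1,D_1) \geq (\Gamma_2,\iota_*(\E_1,D_1)) \geq (\Gamma_2,\E_2,D_2).
\]
Lemma \ref{lem:polyspec} guarantees the intermediate triple is $\mu$-polystable. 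Proposition \ref{prop:face1} then shows $\sigma_{(\Gamma_2,\iota_*(\E_1,D_1))}$ is a face of $\sigma_{(\Gamma_1,\E_1,D_1)}$, and Proposition \ref{prop:face3}(2) shows $\sigma_{(\Gamma_2,\E_2,D_2)}$ is a face of $\sigma_{(\Gamma_2,\iota_*(\E_1,D_1))}$; composing face inclusions gives the required face morphism.

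For the converse, I start with an arbitrary face $F$ of $\sigma_{(\Gamma_1,\E_1,D_1)}$. The projection $\pi_{\E_1}\colon \sigma_{(\Gamma_1,\E_1,D_1)} \to \mathbb{R}^{E(\Gamma_1)}_{\geq 0}$ from Diagram \eqref{eq:tausigma} sends $F$ to a face of the orthant, which determines a graph specialization $\iota\colon \Gamma_1 \to \Gamma_2$. For any $Y$ in the relative interior of $\pi_{\E_1}(F)$, the fiber $\pi_{\E_1}^{-1}(Y)$ identifies with $P_{\iota_*(\E_1,D_1)}(Y)$ by Proposition \ref{prop:parameter1}, and $F \cap \pi_{\E_1}^{-1}(Y)$ is a face of this polytope. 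Since $\iota_*(\E_1,D_1)$ is $\mu$-polystable by Lemma \ref{lem:polyspec}, Proposition \ref{prop:face2} identifies this fiber face as $P_{\E_2,D_2}(Y)$ for a unique $\mu$-polystable pseudo-divisor $(\E_2,D_2)$ with $\iota_*(\E_1,D_1) \geq (\E_2,D_2)$. Combining with $\iota$ yields a morphism $(\Gamma_1,\E_1,D_1) \geq (\Gamma_2,\E_2,D_2)$ in $\PS_{\mu,g}$, and by the forward direction, $\sigma_{(\Gamma_2,\E_2,D_2)}$ is a face of $\sigma_{(\Gamma_1,\E_1,D_1)}$; since $F$ and $\sigma_{(\Gamma_2,\E_2,D_2)}$ are faces of $\sigma_{(\Gamma_1,\E_1,D_1)}$ sharing interior points in the fiber over $Y$, they coincide.

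The main obstacle will be verifying that the polystable pseudo-divisor $(\E_2,D_2)$ extracted fiberwise is independent of the chosen $Y$ in the relative interior of $\pi_{\E_1}(F)$, so that it defines a single combinatorial type rather than a family varying with $Y$. This should follow from the linearity of the supporting hyperplane defining $F$: under the isomorphism $\ol{(f_{\E_1},g_{\E_1})}$ of Lemma \ref{lem:VEker}, a linear equation cutting out $F$ in $\mathbb{R}^{E(\Gamma_1)} \times \mathbb{R}^{\E_1}/L_{\E_1}$ restricts on each fiber to the same linear equation (up to scaling), so by Proposition \ref{prop:face2} it cuts out the same combinatorial face of each polytope $P_{\iota_*(\E_1,D_1)}(Y)$, giving a uniform $(\E_2,D_2)$ and closing the argument.
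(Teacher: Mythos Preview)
Your forward direction is exactly the paper's: factor through $(\Gamma_2,\iota_*(\E_1,D_1))$, invoke Lemma \ref{lem:polyspec}, then apply Proposition \ref{prop:face1} followed by Proposition \ref{prop:face3}(2).

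Your converse, however, takes a genuinely different route. The paper argues directly at the level of the orthant: since $\sigma_{(\Gamma_1,\E_1,D_1)}=\T_{\E_1}(\R^{E(\Gamma_1^{\E_1})}_{\ge0})$ is a linear image of an orthant, every face of it is the image of a coordinate face of $\R^{E(\Gamma_1^{\E_1})}_{\ge0}$. Such a coordinate face is $\R^{E(\Gamma_2^{\E_2})}_{\ge0}$ for some specialization $(\Gamma_1,\E_1,D_1)\ge(\Gamma_2,\E_2,D_2)$, and by Remark \ref{rem:semispec} the resulting $(\E_2,D_2)$ is $\mu$-semistable; thus every face already has the form $\sigma_{(\Gamma_2,\E_2,D_2)}$ with $(\E_2,D_2)$ semistable. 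One then replaces $(\E_2,D_2)$ by $\pol(\E_2,D_2)$ and uses Proposition \ref{prop:face3}(1) to see $\sigma_{(\Gamma_2,\E_2,D_2)}=\sigma_{(\Gamma_2,\pol(\E_2,D_2))}$, so the face comes from a polystable triple. This sidesteps any fiberwise analysis entirely.

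Your fiberwise approach is workable but carries extra burdens. You should justify that $\pi_{\E_1}(F)$ is a face of $\R^{E(\Gamma_1)}_{\ge0}$ (true, but not automatic for linear projections of faces; here it follows because $F$ is the $\T_{\E_1}$-image of a coordinate face and $f_{\E_1}$ sends coordinate faces to coordinate faces). You also need the identification $\pi_{\E_1}^{-1}(Y)\cong P_{\iota_*(\E_1,D_1)}(Y)$ for $Y$ on the boundary of the orthant, which is a mild extension of Proposition \ref{prop:parameter1} via Proposition \ref{prop:face1}. Finally, your independence-of-$Y$ argument is only sketched; the paper's route avoids this issue altogether by never introducing a choice of fiber. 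Both arguments are correct, but the paper's is shorter and uses only the $\pol$ map and Proposition \ref{prop:face3}(1) rather than the polytope-level Proposition \ref{prop:face2}.
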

\begin{proof}
Assume that $(\Gamma_1,\E_1,D_1)\ge (\Gamma_2,\E_2,D_2)$. Let $\iota\col \Gamma_1\to \Gamma_2$ be the induced specialization and set $(\E',D')=\iota_*(\E_1,D_1)$. We have that 
\[
(\Gamma_1,\E_1,D_1)\ge (\Gamma_2,\E',D')\ge (\Gamma_2,\E_2,D_2).
\] 
 By Proposition \ref{prop:face1} the cone $\sigma_{(\Gamma_2,\E',D')}$ is a face of $\sigma_{(\Gamma_1,\E_1,D_1)}$. 
 By Lemma \ref{lem:polyspec} we have that $(\E',D')$ is $\mu$-polystable. Then, by Lemma \ref{prop:face3}, the cone $\sigma_{(\Gamma_2,\E_2,D_2)}$ is a face of $\sigma_{(\Gamma_2,\E',D')}$, and consequently a face of $\sigma_{(\Gamma_1,\E_1,D_1)}$.

 Conversely, the faces of $\sigma_{(\Gamma_1,\E_1,D_1)}$ must be images of faces of $\R^{E(\Gamma^\E)}_{\geq0}$ and hence, by Remark \ref{rem:semispec}, they are of the form $\sigma_{(\Gamma_2,\E_2,D_2)}$, where $(\E_2,D_2)$ is a $\mu$-semistable pesudo-divisor on $\Gamma_2$ such that $(\Gamma_1,\E_1,D_1)\ge (\Gamma_2,\E_2,D_2)$. As before, denote by $(\E',D'):=\iota_*(\E_1,D_1)$, where $\iota\col \Gamma_1\to \Gamma_2$ is the induced specialization. Since $(\E',D')\geq(\E_2,D_2)$, we have $(\E',D')\geq \pol(\E_2,D_2)$ by Proposition \ref{prop:minimal}. Hence $\sigma_{\Gamma_2,\pol(\E_2,D_2)}$ is a face $\sigma_{(\Gamma_1,\E_1,D_1)}$ by the first part of the proof.  It follows from Proposition \ref{prop:face3} that $\sigma_{(\Gamma_2,\E_2,D_2)}^\circ\subset \sigma_{(\Gamma_2,\pol(\E_2,D_2))}^\circ$, which implies that $\sigma_{(\Gamma_2,\E_2,D_2)}=\sigma_{(\Gamma_2,\pol(\E_2,D_2))}$. Hence, every face of $\sigma_{(\Gamma_1,\E_1,D_1)}$ comes from a $\mu$-polystable pseudo-divisor.
\end{proof}

\begin{Exa}\label{exa:Karl}
There is a subtle aspect about
Proposition \ref{prop:glue} that Karl Christ pointed out to us and that we want to illustrate in this example. Consider the tropical curve $X$ and its model $\Gamma$ as in Figure \ref{fig:Karl}. Let $(\E,D)$ be a pseudo-divisor on $\Gamma$, where $\E=E(\Gamma)$. The cone $\sigma_{(\Gamma,\E,D)}$ is equal to $\R^4_{\geq0}/L$, where $L$ is the linear subspace generated by the vector $(1,1,-1,-1)$ (the coordinates are $(x,y,z,w)$). Identifying $\R^4/L$ with $\R^3$, where $\ol{e_4}=\ol{e_1}+\ol{e_2}-\ol{e_3}$, we can think of $\sigma_{(\Gamma,\E,D)}$ as the cone generated by the vectors $(1,0,0)$, $(0,1,0)$, $(0,0,1)$ and $(1,1,-1)$.
There are 4 possible specializations: 
\[
x=z=0, \;\; y=w=0,\;\; x=w=0, \;\; z=y=0,
\]
which correspond to the faces 
\[
\langle (0,1,0),(1,1,-1)\rangle, \;\; \langle (1,0,0),(0,0,1)\rangle, \;\; \langle (0,1,0),(0,0,1)\rangle, \;\;
\langle (1,0,0),(1,1,-1)\rangle.
\]
 Note that we cannot consider the specializations $x=0$, $y=0$, $z=0$, $w=0$, $x=y=0$, $z=w=0$, as the combinatorial type we get after contraction is not polystable.
The potential issue is that   two equivalent polystable divisors might not remain equivalent upon contraction.
%Let us try to explain that it is not an issue (and hence the faces above glue nicely).
%, I believe that the reasoning below is a suitable explanation, and perhaps the reason why .
This, though, does not happen. In fact, fix a parameter $t>0$ and consider a family of tropical curves $\Gamma(t)$ as in Figure \ref{fig:Karl}, with lengths $\ell_1(t)>0$ and $\ell_2>0$, and assume that $\lim_{t\to0}\ell_1(t)=0$. Consider, for each $t>0$, two equivalent polystable divisors $D_1(t)$ and $D_2(t)$ with  combinatorial type as in Figure \ref{fig:Karl}. Assume that $D_i(t)$ is given by the pair $(x_i(t),y_i(t))$, where $x_i(t), y_i(t)$ are the distances to the vertex $v_0$. These two divisor are equivalent, if, and only if, $x_1(t)-x_2(t)=y_1(t)-y_2(t)$.
 However, $|x_1(t)-x_2(t)|<\ell_1(t)$, hence $\lim_{t\to 0} (y_1(t)-y_2(t))=0$. We deduce that, if the limits $\lim_{t\to 0}D_1(t)$ and $\lim_{t\to 0} D_2(t)$ exist, then they must be the same.
   In a way, when an edge is contracted, one has to do it ``continuously": the points on the other edge are forced to come together.
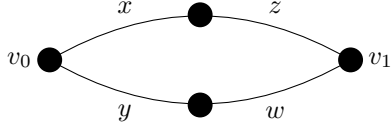
\begin{figure}[h]
    \centering
    \begin{tikzpicture}
    \draw (0,0) to [bend right] (4,0);
    \draw (0,0) to [bend left] (4,0);
    \node at (0,0) [circle, fill]{};
    \node at (-0.4,0) {$v_0$};
    \node at (4.4,0) {$v_1$};
    \node at (4,0) [circle, fill]{};
    \node at (2,0.6) [circle, fill]{};
    \node at (2,-0.6) [circle, fill]{};
    \node at (1,-0.7) {$y$};
    \node at (1,0.7) {$x$};
    \node at (3,-0.7) {$w$};
    \node at (3,0.7) {$z$};
    \end{tikzpicture}
\caption{A tropical curve}
\label{fig:Karl}
\end{figure}
\end{Exa}

\begin{Def}
Let $\mu$ be a genus-$g$ universal polarization of degree $d$. 
The \emph{universal tropical Jacobian} $P^{\trop}_{\mu,g}$ over $M^\trop_{g}$ is the generalized cone complex:
\[
P^{\trop}_{\mu,g}=\lim_{\longrightarrow} \sigma_{(\Gamma,\E,D)}=\coprod_{[(\Gamma,\E,D)]}\sigma_{(\Gamma,\E,D)}^\circ/\Aut(\Gamma,\E,D)
\]
where the limit is taken over all triples $(\Gamma,\E,D)$ running through all objects in the category $\PS_{g,\mu}$ and the union is taken over all equivalence classes $[(\Gamma,\E,D)]$ in $\ps_{g,\mu}$.
 If $\mu$ is the canonical genus-$g$ universal polarization of degree $d$, we simply write:
\[
P^{\trop}_{d,g}:=P^{\trop}_{\mu,g}.
\]
\end{Def}

\begin{Rem}
Recall that, by \cite[Section 5]{KP}, there exists a unique genus-$g$ universal polarization of degree-$d$, which is the canonical polarization. Maintaing the label $\mu$, however, enables to generalize in an easy way to other moduli spaces, which do admit other universal polarizations.
\end{Rem}

\begin{Prop}\label{prop:parameteruni}
The generalized cone complex $P^\trop_{\mu,g}$ parametrizes equivalence classes $(X,\D)$, where $X$ is a stable tropical curve of genus $g$ and $\D$ is a $\mu$-polystable divisor on $X$, and two pairs $(X_1,\D_1)$ and $(X_1,\D_2)$ are equivalent if there exists an isomorphism $\iota\col X_1\to X_2$ such that $\iota_*(\D_1)$ is linearly equivalent to $\D_2$.
\end{Prop}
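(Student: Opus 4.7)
The plan is to combine three previously established ingredients: Proposition \ref{prop:parameter1}, which fiberwise identifies $\sigma^\circ_{(\Gamma,\E,D)}$ with linear equivalence classes of pairs $(X,\D)$ of fixed combinatorial type; Theorem \ref{thm:poly}, which gives existence and uniqueness (of combinatorial type) of polystable representatives within a linear equivalence class; and the automorphism quotient baked into the definition $P^\trop_{\mu,g}=\coprod_{[(\Gamma,\E,D)]}\sigma^\circ_{(\Gamma,\E,D)}/\Aut(\Gamma,\E,D)$. The structure of the proof is to build a bijection between points of $P^\trop_{\mu,g}$ and equivalence classes $[(X,\D)]$ and check it is well-defined in both directions.

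First I would construct the map from $P^\trop_{\mu,g}$ to equivalence classes. A point of $P^\trop_{\mu,g}$ lies in a unique $\sigma^\circ_{(\Gamma,\E,D)}/\Aut(\Gamma,\E,D)$ with $(\Gamma,\E,D)\in \PS_{\mu,g}$. By Proposition \ref{prop:parameter1}, an interior point $u\in \sigma^\circ_{(\Gamma,\E,D)}$ corresponds to a pair $(X,\D)$, with $X$ a stable tropical curve of genus $g$ having model $\Gamma$ and $\D$ a unitary divisor on $X$ of combinatorial type $(\E,D)$, modulo linear equivalence. Since $(\E,D)$ is $\mu$-polystable, $\D$ is $\mu$-polystable by definition. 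Any $\alpha\in\Aut(\Gamma,\E,D)$ acts on the underlying tropical curve by permuting edge lengths and on the divisor by pushforward, inducing an isomorphism $X\to X^\alpha$ that sends $\D$ to $\alpha_*\D$. Therefore the map descends from $\sigma^\circ_{(\Gamma,\E,D)}$ to the quotient and lands in equivalence classes of pairs $(X,\D)$ up to the stated equivalence.

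Next, for surjectivity, take any $(X,\D)$ with $X$ stable of genus $g$ and $\D$ a $\mu$-polystable divisor. Let $\Gamma$ be the $\mu$-model of $X$ and $(\E,D)$ the combinatorial type of $\D$; then $(\Gamma,\E,D)\in \PS_{\mu,g}$, and Proposition \ref{prop:parameter1} produces a point of $\sigma^\circ_{(\Gamma,\E,D)}$ mapping to the class of $(X,\D)$. For well-definedness in the other direction, suppose $(X_1,\D_1)\sim (X_2,\D_2)$ via an isomorphism $\iota\col X_1\to X_2$ with $\iota_*\D_1$ linearly equivalent to $\D_2$. Then $\iota$ sends the $\mu$-model $\Gamma_1$ of $X_1$ isomorphically onto the $\mu$-model $\Gamma_2$ of $X_2$, so both triples have the same isomorphism class $[(\Gamma,\E,D)]$ in $\ps_{\mu,g}$: indeed $\iota_*\D_1$ and $\D_2$ are equivalent $\mu$-polystable divisors on $X_2$, hence share a combinatorial type by Theorem \ref{thm:poly}(2). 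The two pairs then determine the same $\Aut(\Gamma,\E,D)$-orbit in $\sigma^\circ_{(\Gamma,\E,D)}$ by Proposition \ref{prop:parameter1}, as the isomorphism $\iota$ gets absorbed into the automorphism quotient and the remaining linear equivalence is exactly what that proposition quotients by.

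Finally, for injectivity, if two pairs $(X_1,\D_1)$ and $(X_2,\D_2)$ give the same point of $P^\trop_{\mu,g}$, then since the cones are glued along disjoint open strata the two must lie in the same stratum $\sigma^\circ_{(\Gamma,\E,D)}/\Aut(\Gamma,\E,D)$, so one finds an automorphism $\alpha\in\Aut(\Gamma,\E,D)$ identifying their interior points; unpacking via Proposition \ref{prop:parameter1} this yields an isomorphism $\iota\col X_1\to X_2$ with $\iota_*\D_1$ linearly equivalent to $\D_2$. The principal obstacle here, and really the only nontrivial point, is verifying that the $\Aut(\Gamma,\E,D)$-quotient exactly accounts for the isomorphism ambiguity of the pair $(X,\D)$: the subtlety is that Proposition \ref{prop:parameter1} only quotients by linear equivalence on a fixed model, so the rigidity statement of Theorem \ref{thm:poly}(2) is essential to guarantee that an isomorphism of pairs necessarily preserves the combinatorial type, rather than scattering it across several cones.
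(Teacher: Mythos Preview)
Your proposal is correct and follows essentially the same approach as the paper: both build the bijection by combining Proposition \ref{prop:parameter1} (fiberwise description of $\sigma^\circ_{(\Gamma,\E,D)}$) with the automorphism quotient in the definition of $P^\trop_{\mu,g}$. The only cosmetic difference is that you explicitly invoke Theorem \ref{thm:poly}(2) to pin down the combinatorial type in the well-definedness direction, while the paper leaves this implicit and instead highlights Lemma \ref{lem:VEker} in the injectivity step to confirm that a point of $\sigma_{(\Gamma,\E,D)}$ determines well-defined edge lengths (a fact you absorb into your appeal to Proposition \ref{prop:parameter1}).
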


\begin{proof}
By Proposition \ref{prop:parameter1}, each open cone $\sigma_{(\Gamma,\E,D)}^\circ$ parametrizes equivalence classes of pairs $(X,\D)$, where $X$ is a tropical curve with model $\Gamma$ and $\D$ is a unitary divisor on $X$ with combinatorial type $(\E,D)$. Two pairs $(X_1,\D_1)$ and $(X_2,\D_2)$ are equivalent if $X_1=X_2$ and $\D_1$ and $\D_2$ are linearly equivalent.

Let $X$ be a stable tropical curve of genus $g$. The stable model 
$\Gamma$ of $X$ is a genus-$g$ stable weighted graph. %The polarization $\mu_{\Gamma}$ induces a polarization on $X$, such that $\Gamma_X=\Gamma$ (recall that $\Gamma_X$ denotes the model of $X$ whose vertices are the relevant points).\par
   By definition, if $\D$ is a $\mu$-polystable divisor of $X$, then $\D$ is unitary and has combinatorial type  
   equal to some $\mu$-polystable pseudo-divisor $(\E,D)$ on $\Gamma$.  Therefore $(X,\D)$ corresponds to a point in $\R^{E(\Gamma^\E)}_{>0}$, and hence to a point in $\sigma_{(\Gamma,\E,D)}$. Then the pair $(X,\D)$ corresponds to a point in $P^\trop_{\mu,g}$. If $(X,\D)$ and $(X',\D')$ are in the same equivalence class, then there is an isomorphism $\iota\col X\to X'$ such that $\iota_*(D)$ is linearly equivalent to $D'$. Consider the three points $p_1,p_2,p_3$ in $\R^{E(\Gamma^\E)}_{>0}$ corresponding to $(X,\D), (X',\iota_*(\D)),(X',\D')$ . The points $p_1$ and $p_2$ will get identified in  $\R^{E(\Gamma^\E)}_{>0}/\Aut(\Gamma,\E,D)$, while $p_2$ and $p_3$ will identify in $\R^{E(\Gamma^\E)}_{>0}/\EL_\E$. Hence, $p_1$ and $p_3$ will correspond to the same point in 
   \[
   \sigma_{\Gamma,\E,D}^\circ/\Aut(\Gamma,\E,D)=(\R^{E(\Gamma^\E)}_{>0}/\EL_\E)/\Aut(\Gamma,\E,D)\subset P_{\mu,g}^{\trop}.
   \]
   
On the other hand if $(X,\D)$ and $(X',\D')$ corresponds to the same point in $P^\trop_{\mu,g}$ contained in some cell $\sigma^\circ_{(\Gamma,\E,D)}/\Aut(\Gamma,\E,D)$, then there is an isomorphism $\iota\col\Gamma_X\ra\Gamma_{X'}$ such that $\iota_*(\E,D)=(\E',D')$. Moreover, it follows from  Lemma \ref{lem:VEker} that the metrics of $X$ and $X'$ are equal, hence $\iota$ induces an isomorphism of metric graphs $\iota\col X\to X'$. This means that $(X',\D')$ and $(X',\iota_*(\D))$ are the same point in $\sigma_{\Gamma,\E,D}^\circ$. By Proposition \ref{prop:parameter1}, the divisors $\D'$ and $\iota_*(\D)$ are linearly equivalent. Hence $(X,\D)$ and $(X',\D')$ are equivalent, as required.\par

Conversely, given a triple $(\Gamma,\E,D)$, every point in $\R^{E(\Gamma^\E)}_{>0}$ corresponds to a pair $(X,\D)$. By a similar argument as above, we see that if two such points are identified in  $\sigma^\circ_{(\Gamma,\E,D)}/\Aut(\Gamma,\E,D)$, then the pairs are in the same equivalence class.
\end{proof}

\begin{Thm}\label{thm:universalPic}
The generalized cone complex $P^\trop_{\mu,g}$ has dimension $4g-3$ and it is connected in codimension $1$. The natural forgetful map $\pi^{\trop}\col P^\trop_{\mu,g}\to M_g^\trop$ is a map of generalized cone complexes, and we have: 
\[
(\pi^{\trop})^{-1}[X]\cong J(X)/\Aut(X),
\]
for every stable weighted tropical curve $X$ of genus $g$.
\end{Thm}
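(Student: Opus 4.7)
The plan is to assemble the statement from the structural results already in place, with no fundamentally new argument. I would first dispose of the dimension claim by computing $\dim \sigma_{(\Gamma,\E,D)}$ stratum by stratum. Lemma \ref{lem:VEker} gives an isomorphism $\R^{E(\Gamma^\E)}/\EL_\E \cong \R^{E(\Gamma)}\times (\R^\E/L_\E)$, so $\dim \sigma_{(\Gamma,\E,D)}=|E(\Gamma)|+|\E|-\dim L_\E = |E(\Gamma)|+\rk(\E,D)$. Stability of the weighted graph $\Gamma$ of genus $g$ bounds $|E(\Gamma)|\leq 3g-3$ (equality exactly for trivalent graphs with vanishing weights), and Proposition \ref{prop:ranked} bounds $\rk(\E,D)\leq b_1(\Gamma)\leq g$ with equality attained by the maximal elements of $\ps_\mu(\Gamma)$. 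Both extrema are realized simultaneously by a trivalent $\Gamma$ of genus $g$ carrying a maximal-rank $\mu$-polystable pseudo-divisor, yielding $\dim P^{\trop}_{\mu,g}=4g-3$.

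Next, I would construct $\pi^{\trop}$ by gluing the projections $\pi_\E\col\sigma_{(\Gamma,\E,D)}\to\R^{E(\Gamma)}_{\geq 0}$ from Diagram \eqref{eq:tausigma}. Compatibility with specializations is built into the construction: a specialization $(\Gamma_1,\E_1,D_1)\geq(\Gamma_2,\E_2,D_2)$ restricts $\pi_{\E_1}$ to $\pi_{\E_2}$ via the face morphisms of Propositions \ref{prop:face1} and \ref{prop:glue}. After passing to the colimit and quotienting by automorphism groups, this produces a morphism of generalized cone complexes $\pi^{\trop}\col P^{\trop}_{\mu,g}\to M_g^{\trop}$, and Proposition \ref{prop:parameteruni} ensures that at the level of equivalence classes it sends $[(X,\D)]$ to $[X]$.

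For the identification of the fiber, I would fix a stable tropical curve $X$ with stable model $\Gamma$ and apply Proposition \ref{prop:parameter1} to identify $\pi_\E^{-1}(X)$ with $P_{\E,D}(X)$ for each $\mu$-polystable pseudo-divisor $(\E,D)$ on $\Gamma$. Gluing these polytopes along the face morphisms furnished by Proposition \ref{prop:glue}, the preimage of $X$ in the pre-quotient colimit $\lim_\to \sigma_{(\Gamma,\E,D)}$ is precisely the polyhedral complex $P^{\trop}_\mu(X)$, which Theorem \ref{thm:decomposition} identifies with $J(X)$. The residual $\Aut(X)$-action descends faithfully from the action of $\Aut(\Gamma)$ used to quotient $P^{\trop}_{\mu,g}$, yielding $(\pi^{\trop})^{-1}[X]\cong J(X)/\Aut(X)$.

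Finally, connectedness in codimension $1$ follows from Theorem \ref{thm:rankuniv} combined with Proposition \ref{prop:glue}: the latter establishes a bijection between specializations in $\ps_{\mu,g}$ and face inclusions among the cones $\sigma_{(\Gamma,\E,D)}$, so any codimension-$1$ path in the poset translates directly into a codimension-$1$ path between top-dimensional cones in $P^{\trop}_{\mu,g}$. I expect no substantive obstacle; the main conceptual subtlety — that cones of polystable type glue along genuine faces and not merely via inclusions of the sort appearing in the failed candidate $P^{\allsemi,\trop}_{\mu,g}$ (compare Example \ref{exa:Karl}) — has already been absorbed into Propositions \ref{prop:face3} and \ref{prop:glue}. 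The remaining work is therefore purely bookkeeping, passing all of the above through the $\Aut(\Gamma)$-quotients.
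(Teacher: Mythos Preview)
Your proposal is correct and follows essentially the same route as the paper's proof: both deduce the dimension and codimension-$1$ connectedness from Theorem \ref{thm:rankuniv}, build $\pi^{\trop}$ by assembling the maps $\pi_\E$ of Diagram \eqref{eq:tausigma} via Lemma \ref{lem:VEker}, and identify the fiber over $[X]$ with $J(X)/\Aut(X)$ by passing through $P^{\trop}_\mu(X)$ and Theorem \ref{thm:decomposition}. The only cosmetic difference is that the paper simply invokes Theorem \ref{thm:rankuniv} for the dimension claim, whereas you unpack the underlying count $\dim\sigma_{(\Gamma,\E,D)}=|E(\Gamma)|+\rk(\E,D)\le(3g-3)+g$ explicitly.
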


\begin{proof}
The fact that $P^\trop_{\mu,g}$ has pure dimension $4g-3$ and is connected in codimension $1$ follows from Theorem \ref{thm:rankuniv}.\par
 For each cone $\sigma_{(\Gamma,\E,D)}$, the map $\pi^\trop$ induces a map $\sigma_{(\Gamma,\E,D)}\to M_{g}^\trop$ that factors through a chain of maps
\[
\sigma_{(\Gamma,\E,D)}=T_\E(\R_{\geq0}^{E(\Gamma^\E)}) \to \R_{\geq 0}^{E(\Gamma)}\ra  M_{g}^\trop,
\]
 where the first one is the map defined in Lemma \ref{lem:VEker} composed with the projection on the first factor, and the second one is the natural map. Hence $\pi^{trop}$ is a morphism of generalized cone complexes.\par
  
  For every stable weighted tropical curve $X$ of genus $g$, there is a natural map $h\col P^{\trop}_\mu(X)\to P^\trop_{\mu,g}$ and we have $(\pi^{trop})^{-1}([X])=\Ima(h)$. Moreover, $h(\D)=h(\D')$ if and only if there exists an automorphism $\alpha\col X\to X$ such that $\alpha_*(\D)=\D'$, which implies that $\Ima(h)\cong J(X)/\Aut(X)$.
\end{proof}

%\section{Final comments and further problems}

\subsection{Final comments}\label{sec:strat}

%Let  $X$ and  $Y$ be nodal curves with dual graphs $\Gamma_X$ and $\Gamma_Y$ and let $I_X$ and $I_Y$ be torsion free rank-$1$ sheaves over $X$ and $Y$ with multidegrees $(\E_X,D_X)$. If $(Y,L_Y)$ specializes to $(X,L_X)$, then there exists a specialization  $\iota\col (\Gamma_X,\E_X,D_X) \to (\Gamma_Y,\E_Y,D_Y)$. On the other hand, if $\iota\col (\Gamma_X,\E_X,D_X)\to (\Gamma,\E,D)$ is a specializations, there exists a nodal curve $Z$ with dual graph $\Gamma$ and a torsion free renk-$1$ sheaf $L_Z$ on $Z$, with multidegree $(\E,D)$ such that $(Z,L_Z)$ specializes to $(X,L_X)$.\par 

To wrap up this paper, let us make a few observations on  stratifications of some universal compactified Jacobians over the moduli space of stable curves. They all are essentially consequences of \cite[Propositions 3.4.1, 3.4.2]{CC}.

Let $I$ be a torsion-free rank-1 sheaf on a nodal curve $X$. The \emph{combinatorial type} of the pair $(X,I)$ is the triple $(\Gamma,\E,D)$, where $\Gamma$ is the usual dual graph of the nodal curve $X$, the set $\E$ is the subset $\E\subset E(\Gamma)$ of the edges corresponding to the nodes over which $I$ fails to be invertible, and $D$ is the divisor on $\Gamma^\E$ such that $D(v)=-1$ if $v\in V(\Gamma^\E)$ is exceptional, while $D(v)=\deg(I|_{X_v})$ if $v\in V(\Gamma)$ is not exceptional (here $X_v$ is the component of $X$ corresponding to $v$).

As we saw in the introduction, the compactified Picard scheme $\ol{P}_{d,g}$ over  $\ol{M}_g$ parametrizes isomorphism classes of stably balanced line bundles on quasistable curves or, equivalently, pairs $(X,I)$, where $X$ is a stable curve of genus $g$ and $I$ is a $\mu$-polystable torsion-free rank-$1$ sheaf on $X$ of degree $d$ (see \cite{C} and \cite{Pand}). For every graph $\Gamma$ and every pseudo-divisor $(\E,D)$ on $\Gamma$, we let $P_{\Gamma,\E,D}\subset \ol{P}_{d,g}$ be the subscheme where $(X,I)$ has combinatorial type isomorphic to $(\Gamma,\E,D)$. Then we have a stratification: 
\[
\ol{P}_{d,g}=\bigsqcup_{(\Gamma,\E,D)\in \ps_{d,g}}P_{\Gamma,\E,D}.
\]

There also is the Jacobian $\ol{\mathcal{J}}_{d,g}$ over $\ol{\mathcal M}_g$ introduced in \cite{AK}, \cite{Es01} and \cite{M15}. This is the Deligne-Mumford stack parametrizing isomorphism classes of pairs $(X,I)$ where $X$ is a stable curve of genus $g$ and $I$ is a simple torsion-free rank-$1$ sheaf of degree $d$ on $X$. We have a stratification:
\[
\ol{\J}_{d,g}=\bigsqcup_{(\Gamma,\E,D)}\J_{\Gamma,\E,D},
\]
where $(\Gamma,\E,D)$ runs through all stable weighted graphs $\Gamma$ of genus $g$ and simple pseudo-divisors $(\E,D)$ on $\Gamma$. 
Recall that $\ol{\mathcal{J}}_{d,g}$ is neither separated nor of finite type over  $\ol{\mathcal{M}}_g$. 

Finally, we have the compactified Jacobian $\ol{\mathcal{J}}^{ss}_{\mu,g}$. This is the Deligne-Mumford stack over $\ol{\mathcal M}_g$ parametrizing pairs $(X,I)$ where $X$ is a stable curve of genus $g$ and $I$ is a $\mu$-semistable simple torsion-free rank-$1$ sheaf on $X$ (see \cite{Es01} and \cite{M15}). We have a   stratification:
\[
\ol{\J}^{ss}_{\mu,g}=\bigsqcup_{(\Gamma,\E,D)} \J_{\Gamma,\E,D},
\]
where $(\Gamma,\E,D)$ runs through  stable weighted graphs $\Gamma$ of genus $g$ and simple $\mu$-semistable pseudo-divisors $(\E,D)$ on $\Gamma$. Recall that $\ol{\J}^{ss}_{\mu,g}$ is not separated over  $\ol{\mathcal{M}}_g$. 
In the above formulas, $\J_{\Gamma,\E,D}$ is the locus parametrizing pairs $(X,I)$ whose combinatorial type is isomorphic to $(\Gamma,\E,D)$.

%This paper raises the following  natural question:  is there a Deligne-Mumford toroidal universal compactified Jacobian whose skeleton (in the sense of \cite{ACP}) is equal to (the compactification of) $P_{\mu,g}^{\trop}$? The natural candidates should be Caporaso-Pandharipande universal  scheme, or its associated stack of all semistable sheaves on quasistable curves. 
%It is not clear to us that the former is toroidal, and the latter is not Deligne-Mumford in the degenerate case.   

%\begin{enumerate}
 %   \item Construir $P_{\Gamma,\E,D}$ estrato em $P_{d,g}$.
  %  \item $P_{\Gamma,\E,D} = J_{\Gamma_\E,D_\E+F}/\Aut(\Gamma,\E,D)$ ou $P_{\Gamma_\E,D_\E,+F}\to P_{\Gamma,\E,D}$
   % \item 
%\end{enumerate}

%\section{Final remarks and further problems}

%\subsection{Semistable and polystable}

 %  \begin{enumerate}
  %     \item Limit of simple semistable divisors
   %    \item Limit of all semistable divisors
  % \end{enumerate}

%\begin{Question}
%Does every polytope appears in $J^{ps}(\Gamma)$ for some $\Gamma$?
%\end{Question}

\bigskip
\medskip

\noindent{\small Alex Abreu, Sally Andria, Marco Pacini,  and Danny Taboada \\
Universidade Federal Fluminense, Rua Prof. M. W. de Freitas, S/N\\ 
Niter\'oi, Rio de Janeiro, Brazil. 24210-201.}

\end{document}